\numberwithin{equation}{section}
\newcommand{\N}{\mathbb{N}}
\newcommand{\R}{\mathbb{R}}
\newcommand{\C}{\mathbb{C}}
\newcommand{\bord}{\partial\Omega}
\newcommand{\Hp}{\mathcal{B}(\bord)}
\newcommand{\Hm}{\mathcal{B}'(\bord)}
\newcommand{\Tr}{\operatorname{Tr}}
\newcommand{\tr}{\operatorname{tr}}
\newcommand{\ddn}[1]{\frac{\partial #1}{\partial\nu}}
\newcommand{\Bcal}{\mathcal{B}}
\newcommand{\Ucal}{\mathcal{U}}
\newcommand{\Tcal}{\mathcal{T}}
\newcommand{\dx}{\mathrm{d}x}
\newcommand{\dy}{\mathrm{d}y}
\newcommand{\diamO}{\mathrm{diam}(\Omega)}
\newcommand{\widO}{\mathrm{wid}(\Omega)}
\newtheorem{theorem}{Theorem}[section]
\newtheorem{lemma}[theorem]{Lemma}
\newtheorem{definition}[theorem]{Definition}
\newtheorem{proposition}[theorem]{Proposition}
\newtheorem{corollary}[theorem]{Corollary}
\newtheorem{remark}[theorem]{Remark}
\title{Poincaré-Steklov operator and Calder\'on's problem on extension domains}
\author{Gabriel Claret\thanks{Laboratoire MICS and Fédération de Mathématiques de CentraleSupélec, CentraleSupélec, Université Paris-Saclay, France. \texttt{gabriel.claret@centralesupelec.fr}} \and Michael Hinz\thanks{University of Bielefeld, Germany. \texttt{mhinz@math.uni-bielefeld.de}} \and Anna Rozanova-Pierrat\thanks{Laboratoire MICS and Fédération de Mathématiques de CentraleSupélec, CentraleSupélec, Université Paris-Saclay, France. \texttt{anna.rozanova-pierrat@centralesupelec.fr}}}
\date{\today}
\begin{document}

\maketitle

\begin{abstract}
We consider Calder\'on's problem on a class of Sobolev extension domains containing non-Lipschitz and fractal shapes.
We generalize the notion of Poincaré-Steklov (Dirichlet-to-Neumann) operator for the conductivity problem on such domains.
From there, we prove the stability of the direct problem for bounded conductivities continuous near the boundary. Then, we turn to the inverse problem and prove its stability at the boundary for Lipschitz conductivities, which we use to identify such conductivities on the domain from the knowledge of the Poincaré-Steklov operator. Finally, we prove the stability of the inverse problem on the domain for $W^{2,\infty}$ conductivities constant near the boundary. The last two results are valid in dimension $n\ge3$.
\end{abstract}

\tableofcontents

\section{Introduction}

We consider Calder\'on's problem on a class of $H^1$-Sobolev extension domains.
That class contains not only Lipschitz, but also non-Lipschitz shapes, such as domains with fractal or even multi-fractal boundaries, and boundaries of changing Hausdorff dimension.
To do so, we generalize the notion of Poincaré-Steklov (Dirichlet-to-Neumann) operator for the conductivity problem on such domains for positive, bounded conductivities continuous near the boundary.
We prove the stability of the direct problem for such conductivities on our class of extension domains.
Afterwards, we consider Calder\'on's inverse problem.
We prove the stability of the determination at the boundary of Lipschitz conductivities from the knowledge of the Poincaré-Steklov operator, which we use to identify such conductivities on the domain.
Finally, we prove the stability of the determination on the domain of $W^{2,\infty}$ conductivities constant near the boundary.
The last two results rely on a correspondance between the conductivity and the Schrödinger problems, and the existence of `high frequency' solutions to the latter, which holds in dimension $n\ge3$.

Whether it be in the medical field or in the study of metal alloys, the matter of identifying a conductivity from boundary measurements arises in a variety of applications.
Indeed, an inclusion (cancer tumour, impurity in a metal, buried object, and so on) can be seen as a change in the physical properties of a medium, namely its conductivity.
Like cancer tumours which can be highly vascularized, real life objects display irregularities at various scales~\cite{mandelbrot_how_1967,mandelbrot_fractal_1983}. For that matter, it seems relevent to model them using irregular shapes and fractals, which can capture the complexity of their geometries.
In the study of boundary value problems, the consideration of non-Lipschitz shapes demands to overcome several hindrances, among which the definition of the boundary values themselves.
The trace to the boundary can no longer be understood as an element of the usual trace Sobolev space $H^{\frac12}(\bord)$, and the normal derivative cannot be defined using the outward normal vector field, for there is no guarantee that field can be defined anywhere on the boundary.
The framework of Soblev extension domains allows to overcome those obstacles and define a trace operator~\cite{biegert_traces_2009} beyond the Lipschitz case which, in particular, does not rely on the existence of a specific boundary measure.
Unlike the previous generalizations of the notion of trace to non-Lipschitz boundaries -- to $d$-set boundaries in~\cite{wallin_trace_1991} and to $d$-upper regular boundaries in~\cite{hinz_non-lipschitz_2021} --, that definition of trace is built on the capacity with respect to $H^1(\R^n)$, which leads to that measure-free construction.
From there, the normal derivatives can be defined by duality using Green's formula~\cite{claret_layer_2024,lancia_transmission_2002}, which allows to adopt a variational approach and consider boundary value problems on extension domains.

The conductivity problem is described by the following equation:
\begin{equation}\label{Eq:Conductivity}
\nabla\cdot(\gamma\nabla u)=0\quad\mbox{on }\Omega,
\end{equation}
where $\gamma:\overline\Omega\to]0,+\infty[$. In an electrical analogy, $\gamma$ can be seen as the electric conductivity of the domain $\Omega$, and $u$ as the electric potential.
The problem one wishes to solve is to identify $\gamma$ based on the knowledge of the pairs formed by the potential at the boundary and the current across that boundary for all the solutions to~\eqref{Eq:Conductivity}.
This boundary data is expressed as the Poincaré-Steklov (or Dirichlet-to-Neumann) operator, formally defined in the following way:
\begin{equation*}
\Lambda^\gamma:\left. u\right|_{\bord}\longmapsto \left. \gamma\ddn{u}\right|_{\bord},
\end{equation*}
where $u$ is a solution to~\eqref{Eq:Conductivity} (see Definition~\ref{Def:P-S} for a proper definition of that operator).
The inverse problem consisting in recovering the conductivity $\gamma$ from the boundary values of the solutions to Eq.~\eqref{Eq:Conductivity} (in the form of $\Lambda^\gamma$) -- think of the electrical impedance tomography for instance~\cite{ammari_electrical_2008, henderson_impedance_1978} -- is often referred to as \textit{Calder\'on's problem}, for it was raised by A. P. Calder\'on in 1980~\cite{calderon_inverse_1980}.
Since then, it has been studied under various assumptions regarding the regularity of the domain and of the conductivity (see for instance~\cite{alessandrini_singular_1990,caro_global_2016,kohn_determining_1984}). 

The results on Calder\'on's problem can be split into several categories, based on two considerations.
On the one hand, what is given and what is unknown: we will refer to the direct problem as the determination of the boundary data $\Lambda^\gamma$ from the knowledge of $\gamma$, while the inverse problem consists in recovering $\gamma$ from $\Lambda^\gamma$.
One the other hand, the level of accuracy: identification results conclude to the uniqueness of the unknown given the data of the problem, while stability results allow to quantify the impact a perturbation of the data has on the determination of the unknown.
Without making assumptions on the conductivity $\gamma$, Calder\'on's inverse problem is ill-posed, as several conductivities can be associated to the same Poincaré-Steklov operator; a counter-example can be found in~\cite[p. 156]{alessandrini_stable_1988}.
Therefore, all inverse determination and stability results are conditional, in the sense that the conductivity is assumed to have a certain regularity, and to be bounded below (and above) by given positive constants, as in~\cite{alessandrini_stable_1988,sylvester_global_1987} for instance. 

The purpose of this work is to generalize identification and stability results for Calder\'on's direct and inverse problems on a class of extension domains which we call \emph{admissible} (see Definition~\ref{def:Admissible}).
The first identification result can be found in~\cite{kohn_determining_1984}, where Kohn and Vogelius proved that a real-analytic conductivity is uniquely determined by the associated Poincaré-Steklov operator, assuming the domain is $C^\infty$.
Shortly after, that result was generalized to piecewise analytic conductivities~\cite{kohn_determining_1985}.
The determination of a $C^\infty$ conductivity on a $C^\infty$ domain was proved by Sylvester and Uhlmann in~\cite{sylvester_global_1987} in dimension $n\ge 3$, transforming Eq.~\eqref{Eq:Conductivity} into an equivalent Schrödinger equation (see Eq.~\eqref{Eq:Schrodinger}) and using so-called complex geometrical optics (CGO) solutions -- or high frequency solutions -- to the latter.
A similar transformation was used in~\cite{caro_global_2016} and allowed to identify a Lipschitz conductivity on a Lipschitz domain; we adapt their method to identify a Lipschitz conductivity on an admissible domain satisfying the exterior corkscrew condition~\eqref{eq:Corkscrew} in Theorem~\ref{Th:Inverse-Identification-Domain}.
The extra condition allows to prove boundary stability estimates, on which the approach relies.
Moreover, stability estimates for the direct problem (the determination of $\Lambda^\gamma$ from $\gamma$) are proved in~\cite{sylvester_global_1987} for bounded conductivities and for Lipschitz conductivities~\cite{sylvester_inverse_1988}.
We generalize those estimates to admissible domains for bounded conductivities continuous near the boundary in Theorem~\ref{Th:Calderon-Direct}, relying notably on norm estimates for traces modulated by Lipschitz functions (Lemma~\ref{Lem:InequalityB}).
Boundary stability estimates for the inverse problem were also obtained in~\cite{sylvester_global_1987} in the case of $C^\infty$ conductivities, using the pseudo-differential properties of the Poincaré-Steklov operator on smooth domains. 
The conductivity-Schrödinger equivalence and the CGO solutions allowed to make headway in proving identification and stability results for Calder\'on's inverse problem in dimension $n\ge3$:
their use allowed to prove stability estimates on smooth domains for $H^{s+2}$ conductivities, $s>\frac n2$, with a logarithmic modulus of continuity~\cite{alessandrini_stable_1988}, and later on to yield the same kind of estimates at the boundary~\cite{alessandrini_singular_1990} and on the domain~\cite{alessandrini_determining_1991} for $W^{2,\infty}$ conductivities, this time on Lipschitz domains.
There, the loss of regularity of the domain is accounted for by considering a $C^\infty$ quasi-normal vector field (in the sense that the inner product with the normal field is bounded below by a positive constant), which allows to mimic a $C^\infty$ boundary.
One cannot define such a field in the case of a general admissible domain (for the normal vector field itself is not defined \textit{a priori}).
Instead, we assume our admissible domain satisfies the exterior corkscrew condition~\eqref{eq:Corkscrew} and consider a sequence of singular solutions with singularities approaching the boundary at a `suitable' rate.
This allows to yield boundary stability estimates on such admissible domains for Lipschitz conductivities in Theorem~\ref{Th:Calderon-Inv-Stability-Boundary}.
Finally, we use the CGO solutions once again to generalize the results from~\cite{alessandrini_singular_1990,alessandrini_determining_1991} and prove the stability of the determination of the conductivity on the domain in the case of an admissible domain and $W^{2,\infty}$ conductivities constant near the boundary in dimension $n\ge3$ in Theorem~\ref{Th:Inverse-Stability-Domain}. The method of the CGO solutions does not work for $n=2$, and in this article we don't treat the two-dimensional case. 

However, let us mention the kown results in the case $n=2$. In the two-dimensional case the identification results for $W^{2,p}$ conductivities, $p>1$, on Lipschitz domains were proved in~\cite{nachman_global_1996}, for $W^{1,p}$ conductivities, $p>2$, in~\cite{brown_uniqueness_1997}, and generalized to bounded conductivities in~\cite{astala_calderons_2006}.
The boundary regularity is not specified in~\cite{astala_calderons_2006}, however the use of the trace space $H^{\frac 12}(\bord)$ hints at a Lipschitz regularity of the domain.
Still in the plane, logarithmic stability estimates were proved in~\cite{liu_stability_1997} on $C^{1,1}$ domains for $W^{2,p}$ conductivities, $p\in]1,2[$, in~\cite{barcelo_stability_2001} on Lipschitz domains for $C^{1,\alpha}$ conductivities, $\alpha>0$, and in~\cite{barcelo_stability_2007} for Hölder continuous conductivities.

We also mention works of a different nature regarding Calder\'on's problem in which the boundary data is only known on part of the bounday: an identification result of $C^{2,\alpha}$ conductivities near the boundary of a smooth plane domain with partial data is proved in~\cite{imanuvilov_calderon_2010}.
In~\cite{ouhabaz_milder_2018}, the partial Poincaré-Steklov operator is used to determine a symmetric elliptic operator on a Lipschitz domain up to unitary equivalence of the coefficients.

Throughout this paper, $n\ge2$ is the dimension of the ambient space. The $n$-dimensional Lebesgue measure of a Borel set $U\subset\R^n$ will be denoted by $|U|$.
Its complement will be denoted by $U^c:=\R^n\backslash U$ and its closure by $\overline{U}$.
Its diameter will be denoted by $\mathrm{diam}(U):=\sup\{|x-y|\;|\;x,y\in U\}$.
The smallest distance between two parallel hyperplanes between which $U$ lies will be referred to as its width. Formally, it is defined as
\begin{multline*}
\mathrm{wid}(U):=\inf\{a>0\;|\;\exists\, \mathcal{H}_1,\mathcal{H}_2\mbox{ hyperplanes},\\
\mathrm{dist}(\mathcal{H}_1,\mathcal{H}_2)=a\quad\mbox{and}\quad U\subset \mathrm{conv}(\mathcal{H}_1,\mathcal{H}_2)\},
\end{multline*}
where `$\mathrm{dist}$' denotes euclidean distance and `$\mathrm{conv}$' denotes the convex hull.
For $x\in\R^n$ and $r>0$, $B_r(x)$ denotes the open ball of centre $x$ and radius $r$.
The term `domain' refers to a non-empty open connected set.
If $\Omega$ is a bounded domain, the set of all Lipschitz functions on $\Omega$ (and up to the boundary) will be denoted by $\mathrm{Lip}(\Omega)$.
The notation $c$ refers to a positive constant, which may change from one line to another.
If the constant $c$ depends on quantities $\alpha$, $\beta$, $\gamma$, ..., we will denote $c=c(\alpha,\beta,\gamma,...)$.

The rest of the paper is organised as follows: in Section~\ref{Sec:Framework}, we define the class of admissible domains and study the Poincaré-Steklov (Dirichlet-to-Neumann) operator for the conductivity problem in that framework.
In Section~\ref{Sec:Calderon-Problem}, we consider Calder\'on's problem on admissible domains: we focus on the stability of the direct problem in Subsection~\ref{Subsec:Direct-Problem}, and on the inverse problem in Subsection~\ref{Subsec:Inverse-Problem} (stability at the boundary, stability and identification on the domain).
Part of the proofs for the direct problem are similar to the classical case (notably~\cite{sylvester_inverse_1988}) and can be found in Appendix~\ref{A-Sec:Proofs}.
In Appendix~\ref{A-Sec:WP-Estimates}, we prove the well-posedness of the Dirichlet Laplace and conductivity problems on admissible domains along with the associated \textit{a priori} estimates, where we specify the dependencies of the constants involved.

\section{Functional framework}\label{Sec:Framework}

We begin with defining the functional framework in which this study is carried out, namely the classes of domains and conductivities, and the associated Poincaré-Steklov operators.

\subsection{Admissible domains}\label{Subsec:Admissible-Domains}

We define the class of domains considered, which we refer to as the class of admissible domains.

\begin{definition}[Admissible domains]\label{def:Admissible}
A bounded domain $\Omega$ of $\R^n$ is said to be an admissible domain if
\begin{enumerate}
\item[(i)] there exists a linear bounded extension operator $\mathrm{E}_\Omega:H^1(\Omega)\to H^1(\R^n)$: 
for all $u\in H^1(\Omega)$, $\mathrm{E}_\Omega u|_\Omega=u$ and $$\|\mathrm{E}_\Omega u\|_{H^1(\R^n)}\le c(n,\Omega)\|u\|_{H^1(\Omega)};$$
\item[(ii)] $\bord$ has positive capacity.
\end{enumerate}
\end{definition}

Domains satisfying condition (i) are called $H^1$-extension domains~\cite{hajlasz_sobolev_2008, jones_quasiconformal_1981}.
The notion of capacity used in (ii) refers to the capacity with respect to $H^1(\R^n)$, see~\cite[Section 2.1]{fukushima_dirichlet_2010},~\cite[Section 7.2]{mazya_boundary_1991} and~\cite[Section 2]{biegert_traces_2009}; the notions `quasi-everywhere' (q.e.) and `quasi-continuous' are to be understood accordingly.
All Lipschitz domains are $H^1$-extension domains; more generally, all $(\varepsilon,\delta)$-domains are $H^1$-extension domains~\cite[Theorem 1]{jones_quasiconformal_1981}. If $\Omega$ is a $(\varepsilon,\infty)$-domain and $\overline{\Omega}^c$ is non-empty, then $\Omega$ is admissible.
By~\cite[Theorem 2]{hajlasz_sobolev_2008}, any $H^1$-extension domain $\Omega$ is an $n$-set:
\begin{equation}\label{Eq:n-set}
\exists c>0,\;\forall x\in\Omega,\;\forall r\in]0,1],\quad |\Omega\cap B_r(x)|\ge c\,r^n.
\end{equation}
It follows that a domain with an outward cusp cannot satisfy the $H^1$-extension property, see Figure~\ref{Fig:AdmissibleDomains}.

\begin{figure}[t]
\centering
\includegraphics[height = 5.7 cm, width = 6 cm]{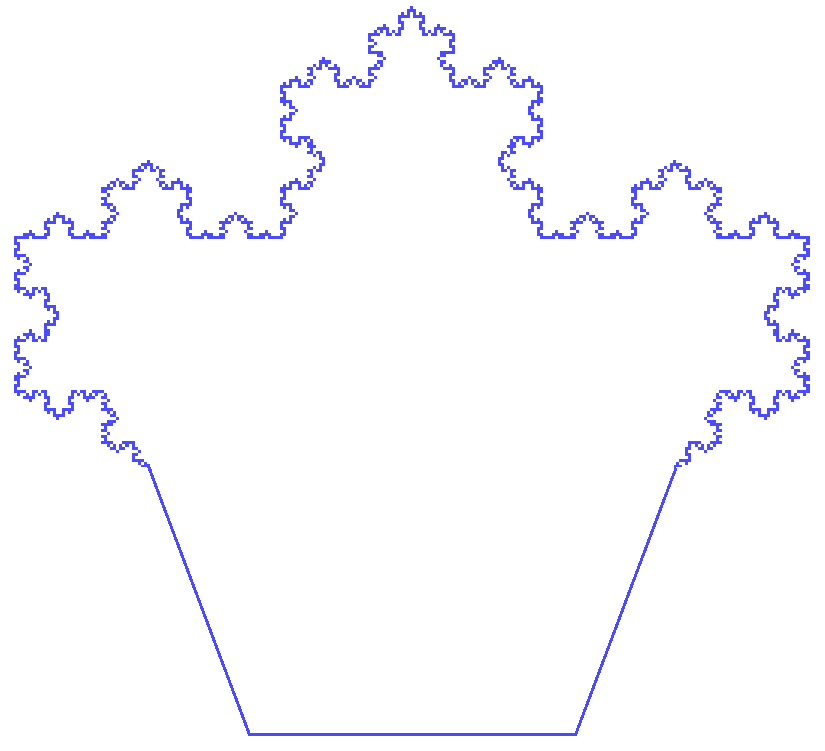}
\hfill
\includegraphics[height = 5.7 cm, width = 5.5 cm]{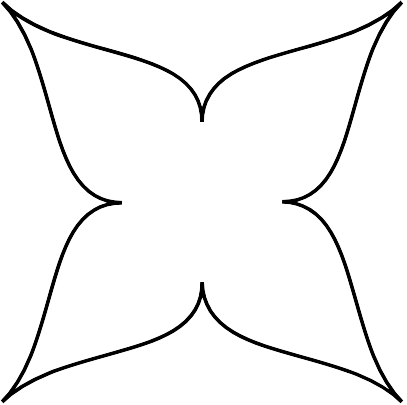}
\caption{On the left, an admissible domain of $\R^2$. The top part of its boundary consists of a Von Koch curve of Hausdorff dimension $\frac{\ln 4}{\ln 3}$, while the bottom part is of Hausdorff dimension $1$. On the right, a domain of $\R^2$ which is not an extension domain for it has outward cusps.}
\label{Fig:AdmissibleDomains}
\end{figure}

For $u\in H^1(\R^n)$, denote by $\tilde u$ a quasi-continuous representative of $u$. If $\Omega$ is an admissible domain, then $[\tilde{u}|_{\bord}]_{\Hp}$, which is the q.e. equivalence class of the pointwise restriction $\tilde{u}|_{\bord}$, depends on $u$ rather than on the choice of $\tilde u$. We denote by $\mathcal{B}(\partial\Omega)$ the space of all $[\tilde{u}|_{\bord}]_{\Hp}$ for $u\in H^1(\R^n)$. We define the trace operator on $\bord$ as in~\cite[Proposition 2.1]{claret_layer_2024}, following~\cite[Theorem 6.1, Remark 6.2 and Corollary 6.3]{biegert_traces_2009}.

\begin{definition}[Trace operator]\label{Def:Trace}
Let $\Omega$ be an admissible domain of $\R^n$ and let $\mathrm{E}_\Omega:H^1(\Omega)\to H^1(\R^n)$ be a linear bounded extension operator. The trace operator on $\bord$ is defined as
\begin{align*}
\Tr:H^1(\Omega)&\longrightarrow\Hp\\
u&\longmapsto [(\mathrm{E}_\Omega u)^\sim|_{\bord}]_{\Hp},
\end{align*}
where $(\mathrm{E}_\Omega u)^\sim$ is a quasi-continuous representative of $\mathrm{E}_\Omega u\in H^1(\R^n)$.
\end{definition}

If $u\in H^1(\Omega)$, a representative of $\Tr u$ is given for q.e. $x\in\bord$ by the following formula:
\begin{equation}\label{Eq:Tr-Representative}
\Tr u(x)=\lim_{r\to 0^+}\frac{1}{|B_r(x)\cap\Omega|}\int_{B_r(x)\cap\Omega}u\,\dx.
\end{equation}
The following theorem, stated in a similar form in~\cite{claret_layer_2024}, synthetizes results regarding the trace operator used throughout this paper.

\begin{theorem}[Trace theorem]\label{Th:Trace}
Let $\Omega$ be an admissible domain of $\R^n$. Then, the following assertions hold.
\begin{enumerate}[label=(\roman*)]
\item\label{Pt:Trace-norm} Endowed with the norm
\begin{equation}\label{normTr}
\left\|f\right\|_{\Hp}:=\min\{ \left\|v\right\|_{H^1(\Omega)}|\ f=\Tr v\},
\end{equation}
the space $\Hp=\Tr(H^1(\Omega))$ is a Hilbert space.
\item\label{Pt:Trace-kernel} $\operatorname{Ker}(\Tr)=H^1_0(\Omega):=\overline{C^\infty_0(\Omega)}^{\|\cdot\|_{H^1(\Omega)}}$ is the closure in $H^1(\Omega)$ of the set of indefinitely differentiable functions with compact support in $\Omega$.
If $\Omega$ is bounded, then $\|\cdot\|_{H^1_0(\Omega)}:=\|\nabla\cdot\|_{L^2(\Omega)^n}$ is a norm on $H^1_0(\Omega)$ and it holds
\begin{equation}\label{Eq:Poincaré}
\forall u\in H^1_0(\Omega),\quad \|u\|_{H^1_0(\Omega)}\le\|u\|_{H^1(\Omega)}\le c(\widO)\|u\|_{H^1_0(\Omega)}.
\end{equation}
\item\label{Pt:Trace-contraction} The trace operator $\Tr: H^1(\Omega) \to \Hp$ is a partial isometry with operator norm equal to $1$.
\item\label{Pt:Trace-isometry} $\tr:=\Tr|_{V_1(\Omega)}: V_1(\Omega) \to \Hp$ defines an isometry, where 
\begin{equation}\label{V1}
V_1(\Omega)=\big\{v\in H^1(\Omega)\;\big|\; (-\Delta+1)v=0 \mbox{ on }\Omega\big\}
\end{equation}
is the space of $1$-harmonic functions on $\Omega$, endowed with the standard $H^1$ norm. It holds:
\begin{equation}\label{EqV1andOthers}
V_1(\Omega)=\operatorname{Ker}(\Tr)^\perp =(H^1_0(\Omega))^\perp.
\end{equation}
\end{enumerate}
\end{theorem}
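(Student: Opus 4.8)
The plan is to deduce all four assertions from the single structural identity $\operatorname{Ker}(\Tr)=H^1_0(\Omega)$ of point~\ref{Pt:Trace-kernel}, after which the Hilbert space geometry of $H^1(\Omega)$ does essentially all of the remaining work through the orthogonal decomposition $H^1(\Omega)=H^1_0(\Omega)\oplus H^1_0(\Omega)^\perp$. So I would establish~\ref{Pt:Trace-kernel} first and derive~\ref{Pt:Trace-norm},~\ref{Pt:Trace-contraction} and~\ref{Pt:Trace-isometry} from it.

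For the kernel identity I would argue the two inclusions separately. The inclusion $H^1_0(\Omega)\subseteq\operatorname{Ker}(\Tr)$ is the easy one: for $\varphi\in C^\infty_0(\Omega)$ and q.e.\ $x\in\bord$, the averages in~\eqref{Eq:Tr-Representative} vanish as soon as $r$ is smaller than the distance from $x$ to $\operatorname{supp}\varphi$, so $\Tr\varphi=0$; since $\|\Tr w\|_{\Hp}\le\|w\|_{H^1(\Omega)}$ holds directly from~\eqref{normTr} and $H^1_0(\Omega)$ is by definition the $H^1$-closure of $C^\infty_0(\Omega)$, the inclusion follows by approximation. The reverse inclusion $\operatorname{Ker}(\Tr)\subseteq H^1_0(\Omega)$ is the genuine obstacle, and it is exactly where the admissibility hypotheses are used: I would invoke the capacity-theoretic description of $H^1_0$ on $H^1$-extension domains developed by Biegert (as recalled in~\cite{biegert_traces_2009,claret_layer_2024}), namely that $u\in H^1(\Omega)$ belongs to $H^1_0(\Omega)$ precisely when the quasi-continuous representative of an $H^1(\R^n)$-extension of $u$ vanishes q.e.\ on $\bord$, i.e.\ precisely when $\Tr u=0$. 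The Poincaré inequality~\eqref{Eq:Poincaré} is then a separate and routine matter: the left inequality is trivial, and the right one follows for $\varphi\in C^\infty_0(\Omega)$ from integration along the direction realizing the width of $\Omega$ together with Cauchy--Schwarz, giving a constant depending only on $\widO$, and extends to $H^1_0(\Omega)$ by density.

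With~\ref{Pt:Trace-kernel} in hand I would handle~\ref{Pt:Trace-norm} and~\ref{Pt:Trace-contraction} together. Since $\operatorname{Ker}(\Tr)=H^1_0(\Omega)$ is closed, let $P$ denote the orthogonal projection of $H^1(\Omega)$ onto $H^1_0(\Omega)^\perp$. For fixed $u$, the set of $v$ with $\Tr v=\Tr u$ is the coset $u+H^1_0(\Omega)$, and $Pu$ is its unique element of minimal norm; hence the minimum in~\eqref{normTr} is attained and $\|\Tr u\|_{\Hp}=\|Pu\|_{H^1(\Omega)}$. This exhibits $\|\cdot\|_{\Hp}$ as the quotient norm on $H^1(\Omega)/H^1_0(\Omega)$, identifying $\Hp$ isometrically with that quotient Hilbert space and proving~\ref{Pt:Trace-norm}. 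The same identity $\|\Tr u\|_{\Hp}=\|Pu\|_{H^1(\Omega)}$ shows that $\Tr$ kills $H^1_0(\Omega)$, restricts to an isometry on $H^1_0(\Omega)^\perp$ (where $Pu=u$), and satisfies $\|\Tr u\|_{\Hp}\le\|u\|_{H^1(\Omega)}$ with equality on the complement; this is exactly the statement that $\Tr$ is a partial isometry of norm $1$, which is~\ref{Pt:Trace-contraction}.

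Finally, for~\ref{Pt:Trace-isometry} I would first establish the identity $V_1(\Omega)=H^1_0(\Omega)^\perp$ asserted in~\eqref{EqV1andOthers}. Testing $(-\Delta+1)v=0$ against $\varphi\in C^\infty_0(\Omega)$ gives $\langle v,\varphi\rangle_{H^1(\Omega)}=\int_\Omega(\nabla v\cdot\nabla\varphi+v\varphi)\,\dx=0$, and by density this is equivalent to $v\perp H^1_0(\Omega)$; hence $V_1(\Omega)=H^1_0(\Omega)^\perp=\operatorname{Ker}(\Tr)^\perp$. Restricting the partial isometry of~\ref{Pt:Trace-contraction} to this complement then produces the isometry $\tr$, giving~\ref{Pt:Trace-isometry}. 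The only delicate ingredient in the whole argument is the hard inclusion in~\ref{Pt:Trace-kernel}; everything else is Hilbert space bookkeeping.
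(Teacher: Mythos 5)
Your proof is correct and follows essentially the same route as the paper, which itself only cites references: the hard inclusion $\operatorname{Ker}(\Tr)\subseteq H^1_0(\Omega)$ is delegated to the capacity-theoretic characterization of $H^1_0$ (the paper's citation of~\cite{fukushima_dirichlet_2010} and~\cite{biegert_traces_2009}), and your orthogonal-projection/quotient-norm argument for~\ref{Pt:Trace-norm},~\ref{Pt:Trace-contraction} and~\ref{Pt:Trace-isometry} is exactly the Stampacchia-type minimization the paper invokes. The only points worth tightening are that passing $\Tr\varphi_n=0$ to the limit uses the bound $\|\Tr v\|_{\Hp}\le\|v\|_{H^1(\Omega)}$ before the norm has been fully justified (harmless, since that bound is immediate from the definition of the infimum), and that the operator norm of $\Tr$ equals $1$ (rather than merely being $\le 1$) because $\bord$ has positive capacity, so $H^1_0(\Omega)^\perp\neq\{0\}$.
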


\begin{proof}
We refer to~\cite{claret_layer_2024} and the references therein, in particular to~\cite[Theorem~1]{hinz_existence_2021},\cite[Theorem~5.1]{hinz_non-lipschitz_2021},~\cite[Theorem~2]{dekkers_mixed_2022} for Point~\ref{Pt:Trace-norm} and~\cite[Corollary 2.3.1 and Example 2.3.1]{fukushima_dirichlet_2010} for Point~\ref{Pt:Trace-kernel}.
The dependencies of the Poincaré constant from~\eqref{Eq:Poincaré} can be found in~\cite[Subsection 6.26]{adams_sobolev_1975}.
Points~\ref{Pt:Trace-contraction} and~\ref{Pt:Trace-isometry} follow from the previous points and the use of Stampacchia's theorem~\cite[Theorem V.6]{brezis_analyse_1987}. 
\end{proof}

The class of admissible domains contains non-smooth domains (see Figure~\ref{Fig:AdmissibleDomains}) and there is no guarantee that the normal vector field can be defined anywhere on the boundary of a such domain.
For that matter, we turn to the same weak notion of normal derivative as in~\cite{claret_layer_2024}, defined as an element of the dual space $\Hm:=\mathcal{L}(\Hp,\R)$.

\begin{definition}[Weak normal derivative]\label{def:ddn}
Let $\Omega$ be an admissible domain of $\R^n$. Let $u\in H^1_\Delta(\Omega)$, where that space is defined as
\begin{equation}\label{def:H1D}
H^1_{\Delta}(\Omega):=\left\{v\in H^1(\Omega)\;|\;\Delta v\in L^2(\Omega)\right\}.
\end{equation}
The weak normal derivative of $u$ is the element $\psi\in\Hm$ such that for all $v\in H^1(\Omega)$, it holds:
\begin{equation}\label{EqGreenInt}
\langle\psi,\Tr v\rangle_{\Hm,\,\Hp}= \int_\Omega{(\Delta u)v\,\dx}+\int_\Omega{\nabla u\cdot\nabla v\,\dx},
\end{equation}
and we denote $\ddn{u}|_{\bord}:=\psi$ (or simply $\ddn u$ when there is no ambiguity).
\end{definition}

We refer to~\cite[Subsection 2.5]{claret_layer_2024} for properties of the normal derivation operator, including the proof of the following result on isometric properties of that operator~\cite[Corollary 2.17]{claret_layer_2024}.

\begin{proposition}\label{Prop:ddn-isom}
If $\Omega$ is an admissible domain of $\R^n$, then $\ddn{}:V_1(\Omega)\to\Hm$ is an isometry and onto when $\Hm$ is endowed with the subordinate norm to $\lVert\cdot\rVert_{\Hp}$, denoted by $\lVert\cdot\rVert_{\Hm}$.
\end{proposition}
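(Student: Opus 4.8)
The plan is to reduce the whole statement to the isometric identification $\tr:V_1(\Omega)\to\Hp$ of Theorem~\ref{Th:Trace}\ref{Pt:Trace-isometry} by means of a single key computation. First I would observe that $V_1(\Omega)\subseteq H^1_\Delta(\Omega)$: if $u\in V_1(\Omega)$ then $(-\Delta+1)u=0$ by \eqref{V1}, so $\Delta u=u\in L^2(\Omega)$ and the weak normal derivative $\ddn{u}$ of Definition~\ref{def:ddn} is indeed defined. Substituting $\Delta u=u$ into the Green identity \eqref{EqGreenInt} then gives, for every $v\in H^1(\Omega)$,
$$\langle\ddn{u},\Tr v\rangle_{\Hm,\Hp}=\int_\Omega uv\,\dx+\int_\Omega\nabla u\cdot\nabla v\,\dx=\langle u,v\rangle_{H^1(\Omega)}.$$
This is the crux of the argument: on $V_1(\Omega)$ the normal derivative pairs against a trace exactly like the ambient $H^1$ inner product. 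One should note in passing that the right-hand side depends only on $\Tr v$, as it must: if $\Tr v_1=\Tr v_2$ then $v_1-v_2\in\operatorname{Ker}(\Tr)=H^1_0(\Omega)=V_1(\Omega)^\perp$ by Theorem~\ref{Th:Trace}\ref{Pt:Trace-kernel} and \eqref{EqV1andOthers}, hence it is $H^1$-orthogonal to $u\in V_1(\Omega)$.

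For the isometry I would compute the subordinate norm directly. Every $f\in\Hp$ equals $\tr w$ for a unique $w\in V_1(\Omega)$ with $\|f\|_{\Hp}=\|w\|_{H^1(\Omega)}$, since $\tr$ is an isometric isomorphism. Taking $v=w$ in the key identity yields
$$\|\ddn{u}\|_{\Hm}=\sup_{w\in V_1(\Omega)\setminus\{0\}}\frac{|\langle u,w\rangle_{H^1(\Omega)}|}{\|w\|_{H^1(\Omega)}}=\|u\|_{H^1(\Omega)},$$
the final equality following from Cauchy--Schwarz for the upper bound and the choice $w=u$ for the lower bound. Since $V_1(\Omega)$ carries the standard $H^1$ norm, this is precisely the asserted isometry.

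For surjectivity I would invoke Riesz representation on the Hilbert space $\bigl(V_1(\Omega),\langle\cdot,\cdot\rangle_{H^1(\Omega)}\bigr)$, which is a closed subspace of $H^1(\Omega)$ by \eqref{EqV1andOthers}. Given $\psi\in\Hm$, the map $w\mapsto\langle\psi,\tr w\rangle_{\Hm,\Hp}$ is a bounded linear functional on $V_1(\Omega)$, so there is a unique $u\in V_1(\Omega)$ with $\langle u,w\rangle_{H^1(\Omega)}=\langle\psi,\tr w\rangle_{\Hm,\Hp}$ for all $w\in V_1(\Omega)$. Writing an arbitrary $f\in\Hp$ as $f=\tr w$ and applying the key identity gives $\langle\ddn{u},f\rangle_{\Hm,\Hp}=\langle u,w\rangle_{H^1(\Omega)}=\langle\psi,f\rangle_{\Hm,\Hp}$, i.e. $\ddn{u}=\psi$, so $\ddn{}$ is onto.

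The only genuine work lies in establishing the key identity of the first paragraph; once it is in hand, both claims are formal consequences of the isometric isomorphism $\tr:V_1(\Omega)\to\Hp$ together with Riesz representation. The points demanding care are the consistency check that the pairing $\langle\ddn{u},\cdot\rangle$ factors through $\Tr$ (handled by the orthogonality $H^1_0(\Omega)\perp V_1(\Omega)$) and the verification that $\Delta u=u\in L^2(\Omega)$, which is what licenses the use of Definition~\ref{def:ddn} on all of $V_1(\Omega)$.
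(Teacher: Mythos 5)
Your proof is correct. Note that the paper itself does not argue this proposition in-text: it simply cites Corollary 2.17 of the authors' earlier work \cite{claret_layer_2024}, so there is no internal proof to compare against. Your argument supplies the natural self-contained derivation from the ingredients the paper does provide: the observation that $\Delta u=u$ on $V_1(\Omega)$ turns the Green pairing \eqref{EqGreenInt} into the $H^1$ inner product $\langle u,v\rangle_{H^1(\Omega)}$, after which the isometry follows from the fact that the minimizer in \eqref{normTr} is the $V_1(\Omega)$-representative (so the supremum defining $\lVert\cdot\rVert_{\Hm}$ may be taken over $w\in V_1(\Omega)$ with $\lVert \tr w\rVert_{\Hp}=\lVert w\rVert_{H^1(\Omega)}$), and surjectivity follows from Riesz representation on the closed subspace $V_1(\Omega)=(H^1_0(\Omega))^\perp$. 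The two points you flag as needing care — that the pairing factors through $\Tr$ (via $H^1_0(\Omega)\perp V_1(\Omega)$) and that $V_1(\Omega)\subseteq H^1_\Delta(\Omega)$ — are exactly the right ones, and both are handled correctly. This is a complete proof that could stand in place of the citation.
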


\subsection{Poincaré-Steklov operator for conductivity problem}\label{Subsec:PS-Operator}


If $\Omega$ is an admissible domain, we define the following space of conductivities:
\begin{multline*}
\Tcal(\Omega):=\big\{\eta\in L^\infty(\Omega)\;\big|\;\operatorname{ess\,inf}\eta>0\quad\mbox{and}\\
\exists\, V \mbox{ neighborhood of }\bord\mbox{ in }\Omega,\; \eta|_{V}\in C^0(V)\big\}.
\end{multline*}
For simplicity, if $\eta\in\Tcal(\Omega)$, its essential infimum will be denoted by $\inf\eta$. 
The continuity condition from $\Tcal(\Omega)$ allows to define the quantity $\gamma\ddn u|_{\bord}$ unambiguously when $u\in H^1_\Delta(\Omega)$, so that the Poincaré-Steklov operator $\Lambda^\gamma$ will be well-defined in Definition~\ref{Def:P-S} below. An essential part of the definition of $\Lambda^\gamma$ is the well-posedness of the Dirichlet conductivity problem.

\begin{lemma}\label{Lem:DirichletConductivity-WP}
Let $\Omega$ be an admissible domain of $\R^n$. Let $\gamma\in \Tcal(\Omega)$. Then, for all $f\in\Hp$, the Dirichlet problem
\begin{equation}\label{Eq:ConductivityDir}
\begin{cases}
\nabla\cdot(\gamma\nabla u)=0 &\mbox{on }\Omega,\\
\Tr  u=f,
\end{cases}
\end{equation}
has a unique weak solution $u\in H^1(\Omega)$. That solution satisfies
\begin{equation}\label{Eq:DirichletEstimate}
\|u\|_{H^1(\Omega)}\le c(\widO)\left(1+\frac{\|\gamma\|_{L^\infty(\Omega)}}{\inf\gamma}\right)\|f\|_{\Hp}.
\end{equation}
\end{lemma}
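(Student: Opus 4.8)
The plan is to reduce the problem to a homogeneous-boundary-data setting and apply the Lax–Milgram / Stampacchia machinery to a coercive bilinear form. First I would recall what a weak solution means: $u \in H^1(\Omega)$ with $\Tr u = f$ satisfying $\int_\Omega \gamma \nabla u \cdot \nabla v \, \dx = 0$ for all $v \in H^1_0(\Omega)$. To handle the inhomogeneous boundary condition, I would pick a fixed lift of the boundary data: by the definition of the trace norm~\eqref{normTr}, there exists $w \in H^1(\Omega)$ with $\Tr w = f$ and $\|w\|_{H^1(\Omega)} = \|f\|_{\Hp}$ (indeed, by Theorem~\ref{Th:Trace}\ref{Pt:Trace-isometry} one may take $w$ to be the $1$-harmonic lift). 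Writing $u = w + u_0$, the problem becomes: find $u_0 \in H^1_0(\Omega)$ such that
\begin{equation*}
\int_\Omega \gamma \nabla u_0 \cdot \nabla v \, \dx = -\int_\Omega \gamma \nabla w \cdot \nabla v \, \dx \qquad \text{for all } v \in H^1_0(\Omega).
\end{equation*}

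Next I would verify the hypotheses of Lax–Milgram (or Stampacchia, as invoked for Theorem~\ref{Th:Trace}) for the bilinear form $a(u_0,v) := \int_\Omega \gamma \nabla u_0 \cdot \nabla v \, \dx$ on the Hilbert space $H^1_0(\Omega)$ equipped with the norm $\|\cdot\|_{H^1_0(\Omega)} = \|\nabla \cdot\|_{L^2(\Omega)^n}$. Boundedness is immediate since $|a(u_0,v)| \le \|\gamma\|_{L^\infty(\Omega)} \|u_0\|_{H^1_0(\Omega)} \|v\|_{H^1_0(\Omega)}$, and coercivity follows from the essential lower bound, $a(v,v) \ge (\inf\gamma) \|v\|_{H^1_0(\Omega)}^2$. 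The right-hand side $v \mapsto -\int_\Omega \gamma \nabla w \cdot \nabla v \, \dx$ is a bounded linear functional on $H^1_0(\Omega)$ with norm at most $\|\gamma\|_{L^\infty(\Omega)} \|w\|_{H^1_0(\Omega)}$. Lax–Milgram then yields a unique $u_0 \in H^1_0(\Omega)$, and uniqueness of $u$ follows because any two solutions differ by an element of $\Ker(\Tr) = H^1_0(\Omega)$ on which coercivity forces the difference to vanish.

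Finally, for the estimate~\eqref{Eq:DirichletEstimate} I would combine the coercivity bound with the continuity of the functional: coercivity gives
\begin{equation*}
(\inf\gamma)\|u_0\|_{H^1_0(\Omega)}^2 \le a(u_0,u_0) = -\int_\Omega \gamma \nabla w \cdot \nabla u_0 \, \dx \le \|\gamma\|_{L^\infty(\Omega)} \|w\|_{H^1_0(\Omega)} \|u_0\|_{H^1_0(\Omega)},
\end{equation*}
so $\|u_0\|_{H^1_0(\Omega)} \le \frac{\|\gamma\|_{L^\infty(\Omega)}}{\inf\gamma}\|w\|_{H^1_0(\Omega)}$. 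Then $\|u\|_{H^1_0(\Omega)} \le \|w\|_{H^1_0(\Omega)} + \|u_0\|_{H^1_0(\Omega)} \le \left(1 + \frac{\|\gamma\|_{L^\infty(\Omega)}}{\inf\gamma}\right)\|w\|_{H^1_0(\Omega)}$, and converting back to the full $H^1$ norm via the Poincaré inequality~\eqref{Eq:Poincaré} (which produces the factor $c(\widO)$) together with $\|w\|_{H^1(\Omega)} = \|f\|_{\Hp}$ gives the claimed bound. I do not anticipate a genuine obstacle here: the only points demanding care are the correct bookkeeping of constants (keeping the width-dependence isolated in the Poincaré step) and the justification that the continuity condition on $\gamma$ near $\bord$, while essential later for defining $\gamma\ddn u|_{\bord}$, plays no role in the present well-posedness argument, which uses only $\gamma \in L^\infty(\Omega)$ with $\inf\gamma > 0$.
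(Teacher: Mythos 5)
Your proposal is correct and follows essentially the same route as the paper's own proof (carried out in Appendix~\ref{A-Subsec:Conductivity}): lift $f$ by its $1$-harmonic extension, which realizes the trace norm by Theorem~\ref{Th:Trace}\ref{Pt:Trace-isometry}, apply Lax--Milgram to the coercive form on $H^1_0(\Omega)$ with constants $\inf\gamma$ and $\|\gamma\|_{L^\infty(\Omega)}$, and recover the full $H^1$ norm via Poincar\'e's inequality, which is precisely how the paper arrives at the refined constant in~\eqref{Eq:DirichletEstimate}. The only cosmetic point is that Poincar\'e's inequality~\eqref{Eq:Poincaré} applies to $u_0\in H^1_0(\Omega)$ rather than to $u$ itself, so the final conversion should be performed on the decomposition $u=w+u_0$, which your argument already provides.
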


\begin{proof}
The well-posedness of~\eqref{Eq:ConductivityDir} is proved in Appendix~\ref{A-Subsec:Conductivity}. Here, the estimate is refined using Eq.~\eqref{Eq:Conductivity-Lifted-Estimate}.
\end{proof}

It follows that the Poincaré-Steklov operator for Problem~\eqref{Eq:Conductivity} is well-defined as an operator from the trace space $\Hp$ to its dual $\Hm$.

\begin{definition}[Poincaré-Steklov operator]\label{Def:P-S}
Let $\Omega$ be an admissible domain of $\R^n$ and let $\gamma\in\Tcal(\Omega)$. The Poincaré-Steklov operator associated to Problem~\eqref{Eq:Conductivity} is defined as
\begin{align*}
\Lambda^\gamma:\Hp &\longrightarrow\Hm\\
\Tr u &\longmapsto \gamma\ddn u,
\end{align*}
where $u\in H^1(\Omega)$ satisfies $\nabla\cdot(\gamma\nabla u)=0$ weakly on $\Omega$.
\end{definition}

We generalize to the case of admissible domains properties of the Poincaré-Steklov operator which are well-known in the case of Lipschitz domains, see for instance~\cite[p. 168]{alessandrini_stable_1988} and~\cite[p. 253]{alessandrini_singular_1990}.
Results similar to Point (i) of the proposition below can also be found in~\cite{arendt_spectral_2007} in the Lipschitz case, in~\cite{arfi_dirichlet--neumann_2019} for $d$-set boundaries, in~\cite{ROZANOVA-PIERRAT-2021} for extension domains with Jonsson boundary measures, and in~\cite{claret_layer_2024} in the case of our admissible domains with $\gamma=1$.

\begin{proposition}\label{Prop:P-S}
Let $\Omega$ be an admissible domain of $\R^n$. Then,
\begin{enumerate}
\item[(i)] if $\gamma\in\Tcal(\Omega)$, then the Poincaré-Steklov operator $\Lambda^\gamma$ is linear, bounded and coincides with its adjoint; 

\item[(ii)] if $\gamma_1,\gamma_2\in \Tcal(\Omega)$ and $u_1,u_2\in H^1(\Omega)$ satisfy $\nabla\cdot(\gamma_i\nabla u_i)=0$ weakly on $\Omega$, $i\in\{1,2\}$, then it holds
\begin{equation*}
\langle(\Lambda^{\gamma_1}-\Lambda^{\gamma_2})\Tr u_1,\Tr u_2\rangle_{\Hm,\,\Hp}=\int_\Omega{(\gamma_1-\gamma_2)\nabla u_1\cdot\nabla u_2\,\dx}.
\end{equation*}
\end{enumerate}
\end{proposition}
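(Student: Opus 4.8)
The plan is to establish each of the two points using the variational characterization of the weak normal derivative from Definition~\ref{def:ddn} together with the well-posedness guaranteed by Lemma~\ref{Lem:DirichletConductivity-WP}. The key observation is that for a solution $u$ of $\nabla\cdot(\gamma\nabla u)=0$, the natural bilinear form to work with is $B_\gamma(u,v):=\int_\Omega\gamma\nabla u\cdot\nabla v\,\dx$, and the defining relation for $\gamma\ddn u$ should read
\begin{equation*}
\langle\gamma\ddn u,\Tr v\rangle_{\Hm,\,\Hp}=\int_\Omega\gamma\nabla u\cdot\nabla v\,\dx\qquad\text{for all }v\in H^1(\Omega),
\end{equation*}
which follows from Green's formula~\eqref{EqGreenInt} applied to the operator $\nabla\cdot(\gamma\nabla\cdot)$ (the term $\int_\Omega(\nabla\cdot(\gamma\nabla u))v\,\dx$ vanishes since $u$ solves the equation weakly). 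I would make this identity the backbone of the whole proof, as both (i) and (ii) reduce to it.

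For point (i), linearity of $\Lambda^\gamma$ follows from linearity of the solution map $f\mapsto u$ (uniqueness in Lemma~\ref{Lem:DirichletConductivity-WP} makes this map well-defined and linear) composed with the linear normal-derivative operation. Boundedness I would obtain by chaining estimates: the norm $\|\Lambda^\gamma\Tr u\|_{\Hm}$ is controlled via the above identity by $\|\gamma\|_{L^\infty(\Omega)}\|\nabla u\|_{L^2(\Omega)^n}$ against $\|\Tr v\|_{\Hp}\le\|v\|_{H^1(\Omega)}$, and then the a~priori estimate~\eqref{Eq:DirichletEstimate} converts $\|u\|_{H^1(\Omega)}$ back into a multiple of $\|f\|_{\Hp}=\|\Tr u\|_{\Hp}$. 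Self-adjointness is the symmetry of the form: for two solutions $u_1,u_2$, applying the defining identity once with test function $u_2$ and once with $u_1$ gives
\begin{equation*}
\langle\Lambda^\gamma\Tr u_1,\Tr u_2\rangle=\int_\Omega\gamma\nabla u_1\cdot\nabla u_2\,\dx=\langle\Lambda^\gamma\Tr u_2,\Tr u_1\rangle,
\end{equation*}
and since $\Lambda^\gamma$ maps into the dual one must check this indeed expresses $\Lambda^\gamma=(\Lambda^\gamma)^\ast$ under the duality pairing.

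Point (ii) is the cleanest: for solutions $u_1,u_2$ of the two conductivity problems, I would write the defining identity for $\Lambda^{\gamma_1}\Tr u_1$ tested against $\Tr u_2$ using $v=u_2$, and the identity for $\Lambda^{\gamma_2}\Tr u_2$ tested against $\Tr u_1$ using $v=u_1$; subtracting and exploiting the symmetry of the integrand $\nabla u_1\cdot\nabla u_2$ yields
\begin{equation*}
\langle(\Lambda^{\gamma_1}-\Lambda^{\gamma_2})\Tr u_1,\Tr u_2\rangle=\int_\Omega\gamma_1\nabla u_1\cdot\nabla u_2\,\dx-\int_\Omega\gamma_2\nabla u_2\cdot\nabla u_1\,\dx=\int_\Omega(\gamma_1-\gamma_2)\nabla u_1\cdot\nabla u_2\,\dx.
\end{equation*}

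The main obstacle I anticipate is not any of these algebraic manipulations but rather a careful justification that $\gamma\ddn u$ is a legitimate element of $\Hm$ when $u$ merely satisfies $\nabla\cdot(\gamma\nabla u)=0$ weakly. One must verify that multiplication by $\gamma$ (which is only bounded and continuous \emph{near} the boundary) is compatible with the duality definition of the normal derivative, i.e.\ that the continuity of $\gamma$ on a neighborhood $V$ of $\bord$ is exactly what lets $\gamma\ddn u$ be defined intrinsically on the boundary rather than depending on values of $\gamma$ in the interior. This is precisely the role flagged after the definition of $\Tcal(\Omega)$, and it would require confirming that $u\in H^1_\Delta(\Omega)$ (so that Definition~\ref{def:ddn} applies) — here the weak equation gives $\nabla\cdot(\gamma\nabla u)=0$, and near the boundary the continuity of $\gamma$ should transfer this into control of $\Delta u$ locally. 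I would treat this compatibility as the technical heart and refer to the normal-derivation machinery of~\cite{claret_layer_2024}, while keeping the form-based identities above as the structural skeleton of the argument.
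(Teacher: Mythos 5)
Your proposal is correct and follows essentially the same route as the paper: both rest on the Green-formula identity $\langle\gamma\ddn{u},\Tr v\rangle_{\Hm,\,\Hp}=\int_\Omega\gamma\nabla u\cdot\nabla v\,\dx$, deduce linearity and boundedness from Lemma~\ref{Lem:DirichletConductivity-WP} and estimate~\eqref{Eq:DirichletEstimate}, obtain self-adjointness from the symmetry of the bilinear form, and derive (ii) by applying the identity with the roles of $u_1,u_2$ exchanged and invoking (i). Your closing remark on why the continuity of $\gamma$ near $\bord$ makes $\gamma\ddn u$ a well-defined element of $\Hm$ is exactly the point the paper flags just before Lemma~\ref{Lem:DirichletConductivity-WP}, so it is a welcome but not divergent addition.
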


\begin{proof}
Let us prove Point (i). The linearity of $\Lambda^\gamma$ follows from the definition, while the boundedness follows from~\eqref{Eq:DirichletEstimate}. Let $f,g\in\Hp$ and let $u_f,u_g\in H^1(\Omega)$ be the weak solutions to~\eqref{Eq:ConductivityDir} associated to $f$ and $g$ respectively. Then,
\begin{equation*}
\langle\Lambda^\gamma f,g\rangle_{\Hm,\,\Hp}=\left\langle\gamma\ddn{u_f},\Tr u_g\right\rangle_{\Hm,\,\Hp}=\int_\Omega{\gamma\nabla u_f\cdot\nabla u_g\,\dx},
\end{equation*}
which is symmetric in $(f,g)$. Since $\Hp$ is a Hilbert space, it can be identified with its bidual so that $(\Lambda^\gamma)^*:\Hp\to\Hm$ and $(\Lambda^\gamma)^*=\Lambda^\gamma$.

Let us prove Point (ii). By Green's formula, it holds for $(i,j)=(1,2)$ or $(2,1)$,
\begin{align*}
\langle \Lambda^{\gamma_i} \Tr  u_i,\Tr  u_j\rangle_{\Hm,\,\Hp} &=\left\langle\gamma_i\ddn{u_i},\Tr  u_j\right\rangle_{\Hm,\,\Hp}\\
&= \int_\Omega{\gamma_i\nabla u_i\cdot\nabla u_j\,\dx},
\end{align*}
and the result follows from Point (i).
\end{proof}

The Poincaré-Steklov operators will always be understood as elements of $\mathcal{L}(\Hp,\Hm)$.
Without further precisions, any operator norm of such operators will be with respect to that space.

\section{Calder\'on's problem on extension domains}\label{Sec:Calderon-Problem}

Having generalized the notion of Poincaré-Steklov operator in Subsection~\ref{Subsec:PS-Operator}, we consider Calder\'on's problem on admissible domains, which regards the link between the conductivity and the Poincaré-Steklov operator.

\subsection{Direct problem}\label{Subsec:Direct-Problem}

We begin with the direct problem and prove the continuity of the mapping associated to the direct conductivity problem, that is
\begin{equation*}
\gamma\in \Tcal(\Omega)\longmapsto \Lambda^\gamma\in\mathcal{L}(\Hp,\Hm),
\end{equation*}
where $\Omega$ is only assumed to be an admissible domain.
The approach used here is similar to~\cite[Section 3]{sylvester_inverse_1988}, where the stability of the direct problem is proved in the case of conductivities in $L^\infty(\Omega)$ (or $W^{1,\infty}(\Omega)$ at times) on a smooth domain.
As a matter of fact, some proofs in the case of an admissible domain can be derived directly from their smooth counterparts found there, up to adapting them to the formalism of extension domains.
Those proofs can be found in Appendix~\ref{A-Sec:Proofs}.
The main differences between our case and~\cite{sylvester_inverse_1988} come from the norm estimates of traces and normal derivatives modulated by Lipschitz functions (Lemmas~\ref{Lem:InequalityB} and~\ref{Lem:InequalityB'}), and in the estimates of the norms of normal derivatives in the proof of the main result of this part (Theorem~\ref{Th:Calderon-Direct}).
Another technical difference in the case of admissible domains is that $\mathrm{Lip}(\Omega)$, the space of Lipschitz function on $\Omega$ (and up to the boundary), may be strictly included in $W^{1,\infty}(\Omega)$~\cite[Theorem 7]{hajlasz_sobolev_2008}, and must therefore be distinguished.

Problem~\eqref{Eq:ConductivityDir} is an elliptic problem with non-homogeneous Dirichlet condition.
To study such a problem, it is usual to consider a lifted problem set on $H^1_0(\Omega)$, in which the non-homogeneous Dirichlet condition is replaced with a source term.
The following lemma allows to estimate the solution of the lifted problem by means of the source term and the conductivity.

\begin{lemma}\label{Lem:EstimateG}
Let $\Omega$ be an admissible domain of $\R^n$. Let $\gamma,\eta^+,\eta^-\in\Tcal(\Omega)$. Denote $\eta:=\eta^+-\eta^-$. Let $\varphi\in H^1(\Omega)$. Then the conductivity problem
\begin{equation}\label{Eq:CondProbG}
\begin{cases}
\nabla\cdot(\gamma\nabla u)=\nabla\cdot(\eta\nabla \varphi) &\mbox{on }\Omega,\\
\Tr  u=0,
\end{cases}
\end{equation}
has a unique weak solution $u\in H^1_0(\Omega)$, and it holds
\begin{equation}\label{Eq:EstimGrad}
\|u\|_{H^1_0(\Omega)}\le \frac{\|\eta\|_{L^\infty}}{\inf \gamma}\|\nabla \varphi\|_{L^2(\Omega)^n}.
\end{equation}
In addition, if $\gamma,\eta\in W^{1,\infty}(\Omega)$ and $\Delta \varphi\in L^2(\Omega)$, then
\begin{equation}\label{Eq:EstimLap}
\|\Delta u\|_{L^2(\Omega)}\le \frac{\|\eta\|_{L^\infty}}{\inf \gamma}\|\Delta \varphi\|_{L^2(\Omega)}+\left(\frac{\|\nabla\eta\|_{L^\infty}}{\inf \gamma}+\frac{\|\nabla\gamma\|_{L^\infty}\|\eta\|_{L^\infty}}{(\inf \gamma)^2}\right)\|\nabla \varphi\|_{L^2(\Omega)}.
\end{equation}
\end{lemma}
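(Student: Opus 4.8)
The plan is to solve the variational problem via Lax-Milgram, then derive the two estimates successively.

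\textbf{Well-posedness and the gradient estimate.} First I would reformulate~\eqref{Eq:CondProbG} weakly: $u\in H^1_0(\Omega)$ is a weak solution if $\int_\Omega \gamma\nabla u\cdot\nabla v\,\dx = -\int_\Omega \eta\nabla\varphi\cdot\nabla v\,\dx$ for all $v\in H^1_0(\Omega)$. The bilinear form $a(u,v):=\int_\Omega\gamma\nabla u\cdot\nabla v\,\dx$ is continuous on $H^1_0(\Omega)$ (bounded by $\|\gamma\|_{L^\infty}$) and coercive with constant $\inf\gamma>0$, since $a(u,u)\ge(\inf\gamma)\|\nabla u\|_{L^2(\Omega)^n}^2=(\inf\gamma)\|u\|_{H^1_0(\Omega)}^2$, using that $\|\cdot\|_{H^1_0(\Omega)}=\|\nabla\cdot\|_{L^2(\Omega)^n}$ is a genuine norm on $H^1_0(\Omega)$ by Theorem~\ref{Th:Trace}\ref{Pt:Trace-kernel}. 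The right-hand side $v\mapsto-\int_\Omega\eta\nabla\varphi\cdot\nabla v\,\dx$ is a bounded linear functional on $H^1_0(\Omega)$. Lax-Milgram then yields existence and uniqueness in $H^1_0(\Omega)$. For~\eqref{Eq:EstimGrad}, I would test with $v=u$: coercivity gives $(\inf\gamma)\|u\|_{H^1_0(\Omega)}^2\le|a(u,u)|=|\int_\Omega\eta\nabla\varphi\cdot\nabla u\,\dx|\le\|\eta\|_{L^\infty}\|\nabla\varphi\|_{L^2(\Omega)^n}\|u\|_{H^1_0(\Omega)}$, and dividing through by $(\inf\gamma)\|u\|_{H^1_0(\Omega)}$ gives the claim.

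\textbf{The Laplacian estimate.} Assuming now $\gamma,\eta\in W^{1,\infty}(\Omega)$ and $\Delta\varphi\in L^2(\Omega)$, the equation can be expanded pointwise (in the distributional sense on $\Omega$). Writing $\nabla\cdot(\gamma\nabla u)=\gamma\Delta u+\nabla\gamma\cdot\nabla u$ and similarly for the right-hand side, the PDE reads $\gamma\Delta u+\nabla\gamma\cdot\nabla u=\eta\Delta\varphi+\nabla\eta\cdot\nabla\varphi$, whence
\begin{equation*}
\Delta u=\frac{1}{\gamma}\bigl(\eta\Delta\varphi+\nabla\eta\cdot\nabla\varphi-\nabla\gamma\cdot\nabla u\bigr).
\end{equation*}
Taking $L^2(\Omega)$ norms, bounding $1/\gamma\le 1/\inf\gamma$, and estimating each term by Cauchy-Schwarz gives
\begin{equation*}
\|\Delta u\|_{L^2(\Omega)}\le\frac{\|\eta\|_{L^\infty}}{\inf\gamma}\|\Delta\varphi\|_{L^2(\Omega)}+\frac{\|\nabla\eta\|_{L^\infty}}{\inf\gamma}\|\nabla\varphi\|_{L^2(\Omega)^n}+\frac{\|\nabla\gamma\|_{L^\infty}}{\inf\gamma}\|\nabla u\|_{L^2(\Omega)^n}.
\end{equation*}
Finally I would insert the already-proved bound~\eqref{Eq:EstimGrad}, namely $\|\nabla u\|_{L^2(\Omega)^n}=\|u\|_{H^1_0(\Omega)}\le(\|\eta\|_{L^\infty}/\inf\gamma)\|\nabla\varphi\|_{L^2(\Omega)^n}$, into the last term, producing the factor $\|\nabla\gamma\|_{L^\infty}\|\eta\|_{L^\infty}/(\inf\gamma)^2$ and matching~\eqref{Eq:EstimLap} exactly.

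\textbf{Main obstacle.} The delicate point is justifying the pointwise expansion of the divergence form, in particular that $\nabla\gamma\cdot\nabla u$ and $\nabla\eta\cdot\nabla\varphi$ are genuine $L^2(\Omega)$ functions and that the identity $\gamma\Delta u=\nabla\cdot(\gamma\nabla u)-\nabla\gamma\cdot\nabla u$ holds in $L^2(\Omega)$ rather than merely distributionally. Since $\gamma,\eta\in W^{1,\infty}(\Omega)$ have bounded gradients and $\nabla u,\nabla\varphi\in L^2(\Omega)^n$, the products lie in $L^2(\Omega)$; one must then check via the Leibniz rule for $W^{1,\infty}\cdot H^1$ products that $\gamma\nabla u\in H^1_{\mathrm{loc}}$ with the expected distributional divergence, so that $\Delta u\in L^2(\Omega)$ is meaningful and the manipulation is rigorous. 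This is where the extra regularity hypotheses on $\gamma,\eta$ are genuinely used; the rest is coercivity and Cauchy-Schwarz.
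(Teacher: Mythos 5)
Your proof is correct and follows essentially the same route as the paper: Lax--Milgram on the variational formulation, testing with $v=u$ for the gradient bound, and expanding the divergences via the Leibniz rule to control $\Delta u$ (the paper multiplies the expanded equation by $\Delta u$ and integrates rather than dividing by $\gamma$ pointwise, but the resulting estimates are identical, and your closing remark correctly identifies why $\Delta u\in L^2(\Omega)$ is legitimate). One small slip: since both sides of~\eqref{Eq:CondProbG} are divergences, integration by parts gives $\int_\Omega\gamma\nabla u\cdot\nabla v\,\dx=\int_\Omega\eta\nabla\varphi\cdot\nabla v\,\dx$ \emph{without} the minus sign, though this has no effect on any of your bounds.
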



In our case, the solution is lifted by means of a harmonic function.

\begin{proposition}\label{Prop:EstimateV}
Let $\Omega$ be an admissible domain of $\R^n$. Let $\gamma\in\Tcal(\Omega)$. Let $f\in\Hp$. Denote by $u_\Delta$ and $u$ the unique weak solutions in $H^1(\Omega)$ to the Dirichlet conductivity problem~\eqref{Eq:ConductivityDir} associated to $1$ and $\gamma$ respectively. Then, for $v:=u-u_\Delta$, it holds
\begin{equation}\label{Eq:EstimGrad-v}
\|v\|_{H^1_0(\Omega)}\le c(\widO)\sqrt{\frac{\|\gamma\|_{L^\infty}}{\inf \gamma}}\|f\|_{\Hp},
\end{equation}
and
\begin{equation}\label{Eq:EstimGrad-u}
\|\nabla u\|_{L^2(\Omega)^n}\le
c(\widO)\left(1+\sqrt{\frac{\|\gamma\|_{L^\infty}}{\inf \gamma}}\right)\|f\|_{\Hp}.
\end{equation}
If in addition $\gamma\in W^{1,\infty}(\Omega)$, then
\begin{equation}\label{Eq:EstimLap-v}
\|\Delta v\|_{L^2(\Omega)}\le c(\widO)\frac{\|\nabla\gamma\|_{L^\infty}}{\inf \gamma}
\left(1+\sqrt{\frac{\|\gamma\|_{L^\infty}}{\inf \gamma}}\right)\|f\|_{\Hp}.
\end{equation}
\end{proposition}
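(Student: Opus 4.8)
The plan is to exploit that $u$ and $u_\Delta$ share the same boundary data, so that $v$ is a genuine element of $H^1_0(\Omega)$ solving an inhomogeneous conductivity equation whose source is the harmonic lift $u_\Delta$. First I would record that both $u$ and $u_\Delta$ exist and are unique by Lemma~\ref{Lem:DirichletConductivity-WP} (applied to $\gamma$ and to the constant conductivity $1$ respectively), and that $\Tr v = \Tr u - \Tr u_\Delta = f - f = 0$, so that $v \in H^1_0(\Omega) = \operatorname{Ker}(\Tr)$. The same lemma with $\gamma = 1$ gives the harmonic bound $\|\nabla u_\Delta\|_{L^2(\Omega)^n} \le \|u_\Delta\|_{H^1(\Omega)} \le c(\widO)\|f\|_{\Hp}$, which I will use repeatedly. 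Inserting $u = v + u_\Delta$ into the weak formulation $\int_\Omega\gamma\,\nabla u\cdot\nabla w\,\dx = 0$ (valid for all $w \in H^1_0(\Omega)$) of $u$ yields the weak equation for $v$,
\[
\int_\Omega \gamma\,\nabla v\cdot\nabla w\,\dx = -\int_\Omega \gamma\,\nabla u_\Delta\cdot\nabla w\,\dx ,
\]
which identifies $v$ as the solution furnished by Lemma~\ref{Lem:EstimateG} with $\eta^+ = 1$, $\eta^- = \gamma$ and $\varphi = u_\Delta$.

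For~\eqref{Eq:EstimGrad-v} the key point — and the step I expect to be the real crux — is that the crude bound coming directly from~\eqref{Eq:EstimGrad} would only produce the linear ratio $\|1-\gamma\|_{L^\infty}/\inf\gamma$, not the square root $\sqrt{\|\gamma\|_{L^\infty}/\inf\gamma}$ that the statement asks for. To recover the square root I would test the equation against $w = v$ to get the energy identity $\int_\Omega \gamma|\nabla v|^2\,\dx = -\int_\Omega \gamma\,\nabla u_\Delta\cdot\nabla v\,\dx$, and then apply the Cauchy–Schwarz inequality for the $\gamma$-weighted inner product:
\[
\int_\Omega \gamma|\nabla v|^2\,\dx \le \Big(\int_\Omega \gamma|\nabla u_\Delta|^2\,\dx\Big)^{1/2}\Big(\int_\Omega \gamma|\nabla v|^2\,\dx\Big)^{1/2}.
\]
Cancelling one factor of $\big(\int_\Omega\gamma|\nabla v|^2\,\dx\big)^{1/2}$ and then estimating $\inf\gamma\,\|\nabla v\|_{L^2}^2$ from below on the left and $\|\gamma\|_{L^\infty}\|\nabla u_\Delta\|_{L^2}^2$ from above on the right gives $\|\nabla v\|_{L^2} \le \sqrt{\|\gamma\|_{L^\infty}/\inf\gamma}\,\|\nabla u_\Delta\|_{L^2}$; since $\|v\|_{H^1_0(\Omega)} = \|\nabla v\|_{L^2(\Omega)^n}$, inserting the harmonic bound yields~\eqref{Eq:EstimGrad-v}. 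Estimate~\eqref{Eq:EstimGrad-u} then follows immediately from the triangle inequality $\|\nabla u\|_{L^2} \le \|\nabla v\|_{L^2} + \|\nabla u_\Delta\|_{L^2}$ together with~\eqref{Eq:EstimGrad-v} and the harmonic bound.

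Finally, for~\eqref{Eq:EstimLap-v} I would argue directly rather than through the second estimate of Lemma~\ref{Lem:EstimateG}, since $\Delta u_\Delta = 0$ makes the computation transparent. When $\gamma \in W^{1,\infty}(\Omega)$, Lemma~\ref{Lem:EstimateG} already guarantees $\Delta v \in L^2(\Omega)$, and since $u_\Delta$ is harmonic one has $\Delta v = \Delta u$. Expanding $\nabla\cdot(\gamma\nabla u) = 0$ in the distributional sense gives the pointwise identity $\gamma\,\Delta u + \nabla\gamma\cdot\nabla u = 0$, hence
\[
\Delta v = \Delta u = -\frac{1}{\gamma}\,\nabla\gamma\cdot\nabla u \in L^2(\Omega).
\]
Taking $L^2$ norms yields $\|\Delta v\|_{L^2} \le (\|\nabla\gamma\|_{L^\infty}/\inf\gamma)\,\|\nabla u\|_{L^2}$, and substituting~\eqref{Eq:EstimGrad-u} gives exactly~\eqref{Eq:EstimLap-v}. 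The only points requiring care are the justification that the weak identity for $v$ may be tested against $v$ itself (legitimate since $v \in H^1_0(\Omega)$) and that the distributional product rule producing $\gamma\Delta u = -\nabla\gamma\cdot\nabla u$ is valid for $\gamma \in W^{1,\infty}$, which is standard.
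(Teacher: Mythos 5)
Your proof is correct and follows essentially the same route as the paper: the square root in \eqref{Eq:EstimGrad-v} is obtained exactly as in the paper's proof, by testing the lifted equation against $v$ and applying Cauchy--Schwarz in the $\gamma$-weighted inner product, after which \eqref{Eq:EstimGrad-u} follows by the triangle inequality and the harmonic bound from Appendix~\ref{A-Subsec:Laplace}. The only (harmless) divergence is in \eqref{Eq:EstimLap-v}: the paper routes it through the second estimate in the proof of Lemma~\ref{Lem:EstimateG} fed with the improved gradient bound, whereas you expand $\nabla\cdot(\gamma\nabla u)=0$ directly to get $\Delta v=\Delta u=-\gamma^{-1}\nabla\gamma\cdot\nabla u$ and insert \eqref{Eq:EstimGrad-u}; both yield the stated constant.
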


This allows to quantify the impact a variation of the conductivities has on the lifted solution.

\begin{corollary}\label{Cor:EstimateDiff}
Let $\Omega$ be an admissible domain of $\R^n$. Let $\gamma_1,\gamma_2\in\Tcal(\Omega)$. Let $f\in\Hp$. Denote by $u_\Delta$, $u_1$ and $u_2$ the unique weak solutions in $H^1(\Omega)$ to the Dirichlet problem~\eqref{Eq:ConductivityDir} associated to $1$, $\gamma_1$ and $\gamma_2$ respectively. Then, denoting $v_i:=u_i-u_\Delta$ for $i\in\{1,2\}$, it holds
\begin{equation}\label{Eq:Estim-vDiff}
\|v_1-v_2\|_{H^1_0(\Omega)}\le c(\widO)\frac{\|\gamma_1-\gamma_2\|_{L^\infty}}{\inf \gamma_1}\left(1+\sqrt{\frac{\|\gamma_2\|_{L^\infty}}{\inf \gamma_2}}\right)\|f\|_{\Hp}.
\end{equation}
If in addition $\gamma_1,\gamma_2\in W^{1,\infty}(\Omega)$, then
\begin{multline}\label{Eq:EstimLapDiff}
\|\Delta (v_1-v_2)\|_{L^2(\Omega)}\le
c(\widO)\left(1+\sqrt{\frac{\|\gamma_2\|_{L^\infty}}{\inf \gamma_2}}\right)
\Bigg[\frac{\|\gamma_1-\gamma_2\|_{L^\infty}}{\inf \gamma_1}\times\\
\Bigg(\frac{\|\nabla\gamma_1\|_{L^\infty}}{\inf \gamma_1}
+\frac{\|\nabla\gamma_2\|_{L^\infty}}{\inf \gamma_2}\Bigg)
+\frac{\|\nabla(\gamma_1-\gamma_2)\|_{L^\infty}}{\inf \gamma_1}\Bigg]\|f\|_{\Hp}.
\end{multline}
\end{corollary}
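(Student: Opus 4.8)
The plan is to reduce everything to a single application of Lemma~\ref{Lem:EstimateG} for the difference $w:=v_1-v_2$, after identifying the conductivity problem that $w$ solves. First I would record that each $v_i=u_i-u_\Delta$ lies in $H^1_0(\Omega)$, since $\Tr v_i=\Tr u_i-\Tr u_\Delta=f-f=0$, and that, because $u_\Delta$ is harmonic and $u_i$ is $\gamma_i$-harmonic, $v_i$ weakly solves $\nabla\cdot(\gamma_i\nabla v_i)=-\nabla\cdot(\gamma_i\nabla u_\Delta)$.

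The key algebraic step is to write the equation for $w$ using the single leading coefficient $\gamma_1$. Starting from $\nabla\cdot(\gamma_1\nabla w)=\nabla\cdot(\gamma_1\nabla v_1)-\nabla\cdot(\gamma_1\nabla v_2)$, I would substitute the equation for $v_1$ and, for the $v_2$ term, insert and subtract $\nabla\cdot(\gamma_2\nabla v_2)=-\nabla\cdot(\gamma_2\nabla u_\Delta)$ to absorb the mismatch between the leading coefficients $\gamma_1$ and $\gamma_2$. The terms carrying $u_\Delta$ collect into $-\nabla\cdot((\gamma_1-\gamma_2)\nabla u_\Delta)$, and adding the leftover $-\nabla\cdot((\gamma_1-\gamma_2)\nabla v_2)$ collapses, via $u_2=v_2+u_\Delta$, to
\begin{equation*}
\nabla\cdot(\gamma_1\nabla w)=\nabla\cdot((\gamma_2-\gamma_1)\nabla u_2),\qquad \Tr w=0.
\end{equation*}
This is precisely the setting of Lemma~\ref{Lem:EstimateG} with leading coefficient $\gamma=\gamma_1$, source conductivity $\eta=\gamma_2-\gamma_1$ (so $\eta^+=\gamma_2$, $\eta^-=\gamma_1$), and lifting function $\varphi=u_2\in H^1(\Omega)$.

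For the first bound~\eqref{Eq:Estim-vDiff}, I would apply estimate~\eqref{Eq:EstimGrad} to get $\|w\|_{H^1_0(\Omega)}\le\frac{\|\gamma_1-\gamma_2\|_{L^\infty}}{\inf\gamma_1}\|\nabla u_2\|_{L^2(\Omega)^n}$ and then control $\|\nabla u_2\|_{L^2(\Omega)^n}$ using estimate~\eqref{Eq:EstimGrad-u} of Proposition~\ref{Prop:EstimateV} applied to $\gamma_2$, which produces exactly the factor $1+\sqrt{\|\gamma_2\|_{L^\infty}/\inf\gamma_2}$. For the Laplacian bound~\eqref{Eq:EstimLapDiff}, which requires $\gamma_1,\gamma_2\in W^{1,\infty}(\Omega)$ and hence $\eta=\gamma_2-\gamma_1\in W^{1,\infty}(\Omega)$, I would first observe that $\Delta u_2=\Delta v_2\in L^2(\Omega)$ since $u_\Delta$ is harmonic, so that estimate~\eqref{Eq:EstimLap} applies with $\varphi=u_2$. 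Inserting the Proposition~\ref{Prop:EstimateV} bounds~\eqref{Eq:EstimLap-v} for $\|\Delta v_2\|_{L^2(\Omega)}$ and~\eqref{Eq:EstimGrad-u} for $\|\nabla u_2\|_{L^2(\Omega)^n}$, the $\|\Delta v_2\|$ contribution and the term $\tfrac{\|\nabla\gamma_1\|_{L^\infty}\|\gamma_1-\gamma_2\|_{L^\infty}}{(\inf\gamma_1)^2}\|\nabla u_2\|$ combine into $\frac{\|\gamma_1-\gamma_2\|_{L^\infty}}{\inf\gamma_1}\big(\frac{\|\nabla\gamma_1\|_{L^\infty}}{\inf\gamma_1}+\frac{\|\nabla\gamma_2\|_{L^\infty}}{\inf\gamma_2}\big)$, while the $\|\nabla\eta\|$ term supplies $\frac{\|\nabla(\gamma_1-\gamma_2)\|_{L^\infty}}{\inf\gamma_1}$, all sharing the common prefactor $1+\sqrt{\|\gamma_2\|_{L^\infty}/\inf\gamma_2}$.

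Since the statement is essentially a bookkeeping assembly of two earlier results, the main (modest) obstacle is the algebraic identification of the equation solved by $w$, and in particular the correct treatment of the mismatched leading coefficients by routing through $\gamma_2\nabla v_2$; the only genuine regularity point to check beforehand is that $\Delta u_2\in L^2(\Omega)$, which is needed before estimate~\eqref{Eq:EstimLap} can be invoked and which follows at once from the harmonicity of $u_\Delta$ together with~\eqref{Eq:EstimLap-v}.
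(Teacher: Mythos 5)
Your proposal is correct and follows essentially the same route as the paper: identify the conductivity problem solved by $w=v_1-v_2$ with leading coefficient $\gamma_1$ and source $\nabla\cdot((\gamma_2-\gamma_1)\nabla\varphi)$, then apply Lemma~\ref{Lem:EstimateG} and Proposition~\ref{Prop:EstimateV}. In fact your identification $\varphi=u_2=v_2+u_\Delta$ is the correct one; the paper's displayed identity writes $v_2-u_\Delta$ in place of $v_2+u_\Delta$, which appears to be a sign typo (harmless for the final bounds, since $\Delta(v_2\pm u_\Delta)=\Delta v_2$ and the gradient estimates coincide).
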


The following lemma allows to estimate the norm of a trace modulated by a Lipschitz function.
In the case of a Lipschitz domain, the trace space $H^{\frac 12}(\bord,\sigma)$ can be studied by interpolation of the spaces $L^2(\bord,\sigma)$ and $H^1(\bord,\sigma)$, where $\sigma$ denotes the surface measure on $\bord$~\cite[Lemma 3.4]{sylvester_inverse_1988}.
For an admissible domain however, the trace space can no longer be identified with $H^{\frac12}(\bord)$.
For that matter, we use the trace representative from~\eqref{Eq:Tr-Representative}.

\begin{lemma}\label{Lem:InequalityB}
Let $\Omega$ be an admissible domain of $\R^n$. Let $f\in\Hp$ and $\varphi\in \mathrm{Lip}(\Omega)$. Then, $\varphi f\in \Hp$ and it holds:
\begin{equation*}
\|\varphi f\|_{\Hp}\le \sqrt2\|\varphi\|_{W^{1,\infty}(\Omega)}\|f\|_{\Hp}.
\end{equation*}
\end{lemma}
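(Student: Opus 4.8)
The plan is to reduce the statement to an $H^1(\Omega)$ product estimate combined with a compatibility between the trace operator and pointwise multiplication by $\varphi$. Since $\varphi\in\mathrm{Lip}(\Omega)\subset W^{1,\infty}(\Omega)$, for any $v\in H^1(\Omega)$ the product $\varphi v$ again lies in $H^1(\Omega)$, with the Leibniz rule $\nabla(\varphi v)=\varphi\nabla v+v\nabla\varphi$ holding in the weak sense. I would first fix $v\in H^1(\Omega)$ with $\Tr v=f$ and show that $\Tr(\varphi v)=\varphi f$ as q.e. equivalence classes on $\bord$; once this identity is in hand, membership $\varphi f\in\Hp$ is immediate (it is the trace of $\varphi v$), and the norm bound follows by minimizing the $H^1$ norm of the extension.

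For the trace identity I would use the averaged representative~\eqref{Eq:Tr-Representative}, which is the crux of the argument. Because $\varphi$ is Lipschitz up to the boundary, it extends to a genuine continuous function on $\overline\Omega$, so for q.e. $x\in\bord$, writing $L$ for its Lipschitz constant,
\[
\left|\frac{1}{|B_r(x)\cap\Omega|}\int_{B_r(x)\cap\Omega}(\varphi-\varphi(x))\,v\,\dx\right|\le Lr\cdot\frac{1}{|B_r(x)\cap\Omega|}\int_{B_r(x)\cap\Omega}|v|\,\dx.
\]
The averaged quantity on the right stays bounded as $r\to0^+$ (applying~\eqref{Eq:Tr-Representative} to $|v|\in H^1(\Omega)$, it converges to $|\Tr v(x)|$ for q.e. $x$), so the left-hand side tends to $0$. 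Passing to the limit in the representative of $\Tr(\varphi v)$ then yields $\Tr(\varphi v)(x)=\varphi(x)\Tr v(x)=(\varphi f)(x)$ for q.e. $x\in\bord$, whence $\varphi f=\Tr(\varphi v)\in\Hp$.

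It then remains to control $\|\varphi v\|_{H^1(\Omega)}$. Splitting the norm and inserting $\nabla(\varphi v)=\varphi\nabla v+v\nabla\varphi$, together with $\|\varphi\|_{L^\infty(\Omega)},\|\nabla\varphi\|_{L^\infty(\Omega)}\le\|\varphi\|_{W^{1,\infty}(\Omega)}$, a routine computation controls $\|\varphi v\|_{L^2}^2+\|\nabla(\varphi v)\|_{L^2}^2$, the cross term $\int_\Omega \varphi v\,\nabla\varphi\cdot\nabla v\,\dx$ being absorbed by Cauchy--Schwarz and Young's inequality, giving $\|\varphi v\|_{H^1(\Omega)}\le\sqrt2\,\|\varphi\|_{W^{1,\infty}(\Omega)}\|v\|_{H^1(\Omega)}$. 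Since this holds for every $v$ with $\Tr v=f$, taking the infimum over such $v$ and recalling $\|f\|_{\Hp}=\min\{\|v\|_{H^1(\Omega)}\mid \Tr v=f\}$ from~\eqref{normTr} yields $\|\varphi f\|_{\Hp}\le\sqrt2\,\|\varphi\|_{W^{1,\infty}(\Omega)}\|f\|_{\Hp}$.

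The $H^1$ product estimate is the routine part; the delicate point, and the reason the Lipschitz-domain argument of~\cite{sylvester_inverse_1988} (which interpolates $L^2(\bord,\sigma)$ and $H^1(\bord,\sigma)$) cannot simply be transcribed, is the trace identity $\Tr(\varphi v)=\varphi f$. On an admissible domain there need be no boundary measure, so this identity must be read at the level of q.e. equivalence classes and established through the averaged representative~\eqref{Eq:Tr-Representative}. I expect the main care to be needed precisely in justifying that the boundary average of $|v|$ remains controlled as $r\to0^+$ for quasi-every $x$, which is what makes the error term vanish and lets the continuity of $\varphi$ up to $\bord$ do its work.
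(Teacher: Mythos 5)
Your proposal is correct and follows essentially the same route as the paper: the trace identity $\Tr(\varphi v)=\varphi\Tr v$ is established via the averaged representative~\eqref{Eq:Tr-Representative} using the Lipschitz continuity of $\varphi$, and the bound then comes from the $H^1$ product estimate. The only cosmetic difference is that the paper picks the minimal-norm extension $u\in V_1(\Omega)$ (so that $\|u\|_{H^1(\Omega)}=\|f\|_{\Hp}$ by Theorem~\ref{Th:Trace}\ref{Pt:Trace-isometry}) instead of taking an infimum over all extensions, which is equivalent by~\eqref{normTr}.
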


\begin{proof}
Consider $u\in V_1(\Omega)$ such that $\Tr  u=f$.
Since $\varphi u\in H^1(\Omega)$,~\eqref{Eq:Tr-Representative} allows to find a representative of $\Tr(\varphi u)$, given for q.e. $x\in\bord$ by
\begin{align}\label{Eq:TraceModuling}
\Tr  (\varphi u)(x)&=\lim_{r\to0}\,\frac{1}{|\Omega\cap B_r(x)|}\int_{\Omega\cap B_r(x)}\varphi(y)u(y)\,\dy \nonumber\\
&=\lim_{r\to0}\,\frac{1}{|\Omega\cap B_r(x)|}\int_{\Omega\cap B_r(x)}\big(\varphi(x)+ \underset{y\to x}{O}(|y-x|)\big)u(y)\,\dy \nonumber\\
&=\varphi(x)\Tr  u(x),
\end{align}
by Lipschitz-continuity of $\varphi$, hence $\Tr  (\varphi u)=\varphi f$. Therefore,
\begin{align*}
\|\varphi f\|^2_{\Hp} &\le\|\varphi u\|^2_{H^1(\Omega)}\\
&\le \|\varphi\|_{L^\infty(\Omega)}^2\|u\|^2_{L^2(\Omega)}+\|\varphi\|^2_{W^{1,\infty}(\Omega)}\|u\|_{H^1(\Omega)}^2\\
&\le 2\|\varphi\|_{W^{1,\infty}(\Omega)}^2\|u\|_{H^1(\Omega)}^2=2\|\varphi\|_{W^{1,\infty}(\Omega)}^2\|f\|_{\Hp}^2,
\end{align*}
by Theorem~\ref{Th:Trace}, Point~\ref{Pt:Trace-isometry}.
\end{proof}

A similar estimate holds for normal derivatives modulated by a Lipschitz function.

\begin{lemma}\label{Lem:InequalityB'}
Let $\Omega$ be an admissible domain of $\R^n$. Let $g\in\Hm$ and $\varphi\in \mathrm{Lip}(\Omega)$. Then $\varphi g\in \Hm$ and it holds:
\begin{equation*}
\|\varphi g\|_{\Hm}\le \sqrt 2\|\varphi\|_{W^{1,\infty}(\Omega)}\|g\|_{\Hm}.
\end{equation*}

\end{lemma}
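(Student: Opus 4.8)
The plan is to deduce this estimate from Lemma~\ref{Lem:InequalityB} purely by duality, since the present statement is essentially the transpose of the preceding one. First I would give meaning to the product $\varphi g$ as an element of $\Hm=\mathcal{L}(\Hp,\R)$ by prescribing its action on test traces: for $f\in\Hp$, set
\begin{equation*}
\langle \varphi g, f\rangle_{\Hm,\,\Hp} := \langle g, \varphi f\rangle_{\Hm,\,\Hp}.
\end{equation*}
This is well-posed precisely because Lemma~\ref{Lem:InequalityB} guarantees $\varphi f\in\Hp$ whenever $f\in\Hp$ and $\varphi\in\mathrm{Lip}(\Omega)$, so the right-hand side makes sense; and since $f\mapsto\varphi f$ is linear, the functional $f\mapsto\langle\varphi g,f\rangle$ is linear as well.

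Next I would establish boundedness, which simultaneously confirms $\varphi g\in\Hm$ and produces the claimed inequality. Chaining the duality bound for $g$ with the modulation estimate of Lemma~\ref{Lem:InequalityB}, for every $f\in\Hp$ one has
\begin{equation*}
|\langle \varphi g, f\rangle_{\Hm,\,\Hp}| = |\langle g, \varphi f\rangle_{\Hm,\,\Hp}| \le \|g\|_{\Hm}\,\|\varphi f\|_{\Hp} \le \sqrt{2}\,\|\varphi\|_{W^{1,\infty}(\Omega)}\,\|g\|_{\Hm}\,\|f\|_{\Hp}.
\end{equation*}
Taking the supremum over $f\in\Hp$ with $\|f\|_{\Hp}\le 1$ yields $\|\varphi g\|_{\Hm}\le\sqrt{2}\,\|\varphi\|_{W^{1,\infty}(\Omega)}\,\|g\|_{\Hm}$, as required. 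Equivalently, one may phrase the whole argument operator-theoretically: Lemma~\ref{Lem:InequalityB} says the multiplication operator $M_\varphi\colon f\mapsto\varphi f$ is bounded on $\Hp$ with $\|M_\varphi\|\le\sqrt{2}\,\|\varphi\|_{W^{1,\infty}(\Omega)}$, and $\varphi g$ is nothing but $M_\varphi^{*}g$, so the bound follows from $\|M_\varphi^{*}\|=\|M_\varphi\|$.

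I do not expect any genuine obstacle here: the entire analytic content is already carried by Lemma~\ref{Lem:InequalityB}, and what remains is the standard transposition of a bounded operator to the dual space. The only point deserving a line of care is to check that the bilinear pairing defining $\varphi g$ is indeed consistent with the convention $\langle\cdot,\cdot\rangle_{\Hm,\,\Hp}$ used throughout, so that $\varphi g$ is unambiguously identified as the adjoint action of $M_\varphi$; once that is fixed, the estimate is immediate.
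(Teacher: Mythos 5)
Your proof is correct, and it takes a genuinely more economical route than the paper's. Both arguments give $\varphi g$ the same meaning, namely $\langle\varphi g,f\rangle_{\Hm,\,\Hp}:=\langle g,\varphi f\rangle_{\Hm,\,\Hp}$ (the paper uses this implicitly when it writes $\langle\varphi\ddn{u},\Tr v\rangle$ and immediately works with $\Tr(\varphi v)=\varphi\Tr v$). Where you diverge is in how the bound is obtained: you simply transpose Lemma~\ref{Lem:InequalityB}, observing that $\varphi g=M_\varphi^{*}g$ and invoking $\|M_\varphi^{*}\|=\|M_\varphi\|\le\sqrt2\,\|\varphi\|_{W^{1,\infty}(\Omega)}$. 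The paper does not cite Lemma~\ref{Lem:InequalityB} at all in this proof: it represents $g=\ddn{u}$ with $u\in V_1(\Omega)$ via Proposition~\ref{Prop:ddn-isom}, tests against $\Tr v$ with $v\in V_1(\Omega)$, expands $\langle\ddn{u},\Tr(\varphi v)\rangle$ by Green's formula into $\int_\Omega \varphi uv\,\dx+\int_\Omega\nabla u\cdot\nabla(\varphi v)\,\dx$, estimates these integrals directly, and finishes with the two isometries (Point~\ref{Pt:Trace-isometry} of Theorem~\ref{Th:Trace} and Proposition~\ref{Prop:ddn-isom}). The underlying computation is the same in both cases --- since $\Delta u=u$ for $u\in V_1(\Omega)$, the Green's formula pairing is just the $H^1$ inner product of $u$ and $\varphi v$, so the paper is in effect re-deriving the bound $\|\varphi v\|_{H^1(\Omega)}\le\sqrt2\,\|\varphi\|_{W^{1,\infty}(\Omega)}\|v\|_{H^1(\Omega)}$ that already powers Lemma~\ref{Lem:InequalityB} --- but your packaging makes explicit the structural fact that Lemma~\ref{Lem:InequalityB'} is the adjoint of Lemma~\ref{Lem:InequalityB} and avoids duplicating the estimate. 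Your closing caveat about consistency of the pairing is the right thing to check, and it is satisfied here: the paper itself manipulates $\varphi g$ exactly through this adjoint action, both in its proof and in the later use for $\gamma_i\ddn{v_i}$ in Theorem~\ref{Th:Calderon-Direct}.
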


\begin{proof}
Consider $u\in V_1(\Omega)$ such that $\ddn{u}=g$. For all $v\in V_1(\Omega)$, it holds $\Tr (\varphi v)=\varphi\Tr  v$ by~\eqref{Eq:TraceModuling}, hence
\begin{align*}
\left|\left\langle\varphi\ddn{u},\Tr  v\right\rangle\right| &\le \int_\Omega{|\varphi uv|\,\dx}+\int_\Omega{|\nabla u\cdot (v\nabla \varphi+\varphi\nabla v)|\,\dx}\\
&\le \|u\|_{H^1(\Omega)}(\|\varphi\|_{W^{1,\infty}(\Omega)}\|v\|_{L^2(\Omega)}+\|\varphi\|_{L^\infty(\Omega)}\|\nabla v\|_{L^2(\Omega)^n})\\
&\le \sqrt 2\|\varphi\|_{W^{1,\infty}(\Omega)}\|u\|_{H^1(\Omega)}\|v\|_{H^1(\Omega)}\\
&\le \sqrt 2\|\varphi\|_{W^{1,\infty}(\Omega)}\|g\|_{\Hm}\|\!\Tr  v\|_{\Hp},
\end{align*}
using the isometric properties of the trace and the normal derivative (Theorem~\ref{Th:Trace}, Point~\ref{Pt:Trace-isometry} and Proposition~\ref{Prop:ddn-isom}). Since $\Hp=\Tr (V_1(\Omega))$, the estimate follows.
\end{proof}

Combining the estimates above allows to state the stability of the direct problem on admssible domains.

\begin{theorem}[Stability of the direct problem]\label{Th:Calderon-Direct}
Let $\Omega$ be an admissible domain of $\R^n$. Let $\gamma_1,\gamma_2\in\Tcal(\Omega)$. Then,
\begin{multline}\label{Eq:EstimDirect}
\|\Lambda^{\gamma_1}-\Lambda^{\gamma_2}\|\le c(\widO)\|\gamma_1-\gamma_2\|_{L^\infty}\bigg(1+\frac{\|\gamma_2\|_{L^\infty}}{\inf \gamma_1}\bigg)\times\\
\bigg(1+\frac{\|\gamma_1\|_{L^\infty}}{\inf \gamma_1}+\frac{\|\gamma_2\|_{L^\infty}}{\inf \gamma_2}\bigg).
\end{multline}
In addition, if $\gamma_1,\gamma_2\in \mathrm{Lip}(\Omega)$, then
\begin{multline}\label{Eq:EstimInterpolation}
\|(\Lambda^{\gamma_1}-\gamma_1\Lambda^1)-(\Lambda^{\gamma_2}-\gamma_2\Lambda^1)\| \le c(\widO)\|\gamma_1-\gamma_2\|_{L^\infty}\bigg(1+\frac{\|\gamma_2\|_{L^\infty}}{\inf \gamma_1}\bigg)\\
\times\bigg(1+\frac{\|\nabla\gamma_1\|_{L^\infty}}{\inf \gamma_1}+\frac{\|\nabla\gamma_2\|_{L^\infty}}{\inf \gamma_2}\bigg)
\bigg(1+\sqrt{\frac{\|\gamma_1\|_{L^\infty}}{\inf \gamma_1}}+\sqrt{\frac{\|\gamma_2\|_{L^\infty}}{\inf \gamma_2}}\bigg).
\end{multline}
\end{theorem}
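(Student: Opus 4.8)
The plan is to reduce both bounds to estimates of the operator difference tested as a bilinear form against pairs of solutions, and then to feed in the gradient and Laplacian controls of Proposition~\ref{Prop:EstimateV} and Corollary~\ref{Cor:EstimateDiff} together with the trace-modulation bounds of Lemmas~\ref{Lem:InequalityB} and~\ref{Lem:InequalityB'}. For \eqref{Eq:EstimDirect} I would start from the polarization identity of Proposition~\ref{Prop:P-S}(ii): for $f,g\in\Hp$ with $u_1$ the $\gamma_1$-solution with datum $f$ and $u_2$ the $\gamma_2$-solution with datum $g$,
\[
\langle(\Lambda^{\gamma_1}-\Lambda^{\gamma_2})f,g\rangle=\int_\Omega(\gamma_1-\gamma_2)\nabla u_1\cdot\nabla u_2\,\dx .
\]
Cauchy–Schwarz bounds this by $\|\gamma_1-\gamma_2\|_{L^\infty}\|\nabla u_1\|_{L^2(\Omega)^n}\|\nabla u_2\|_{L^2(\Omega)^n}$, and the gradient estimate \eqref{Eq:EstimGrad-u} controls each factor by $c(\widO)(1+\sqrt{\|\gamma_i\|_{L^\infty}/\inf\gamma_i})\|\cdot\|_{\Hp}$. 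Taking the supremum over unit-norm $f,g$ gives the operator bound; estimating one solution through the $\gamma_1$-problem produces the mildly asymmetric factor $(1+\|\gamma_2\|_{L^\infty}/\inf\gamma_1)$. This step is routine.

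For \eqref{Eq:EstimInterpolation} the starting point is the identity $\Lambda^{\gamma}-\gamma\Lambda^1=\gamma\ddn{v}$, with $v=u-u_\Delta$ the difference of the $\gamma$-conductivity and harmonic extensions of the datum; this makes sense because $\gamma\in\mathrm{Lip}(\Omega)$ forces $\Delta v\in L^2(\Omega)$ by \eqref{Eq:EstimLap-v}, so the weak normal derivative of $v$ is defined. Writing $\eta:=\gamma_1-\gamma_2$ and pairing the difference operator against $\Tr\phi$ with $\phi\in V_1(\Omega)$, Green's formula \eqref{EqGreenInt} together with the elementary relation $\gamma_i\Delta v_i=-\nabla\gamma_i\cdot\nabla u_i$ collapses the pairing to
\[
\big\langle[(\Lambda^{\gamma_1}-\gamma_1\Lambda^1)-(\Lambda^{\gamma_2}-\gamma_2\Lambda^1)]f,\Tr\phi\big\rangle=\int_\Omega(\gamma_1\nabla v_1-\gamma_2\nabla v_2)\cdot\nabla\phi\,\dx-\int_\Omega\phi\,\nabla u_\Delta\cdot\nabla\eta\,\dx .
\]
The first integral is benign: writing $\gamma_1\nabla v_1-\gamma_2\nabla v_2=\gamma_1\nabla(v_1-v_2)+\eta\nabla v_2$ and using $\|\nabla(v_1-v_2)\|_{L^2}=\|v_1-v_2\|_{H^1_0(\Omega)}$ from \eqref{Eq:Estim-vDiff} and $\|\nabla v_2\|_{L^2}=\|v_2\|_{H^1_0(\Omega)}$ from \eqref{Eq:EstimGrad-v}, Cauchy–Schwarz against $\|\nabla\phi\|_{L^2}\le\|\Tr\phi\|_{\Hp}$ (the isometry of Theorem~\ref{Th:Trace}\ref{Pt:Trace-isometry}) shows it is of order $\|\eta\|_{L^\infty}$ times the conductivity factors of \eqref{Eq:EstimInterpolation}.

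The crux, and the step I expect to be the main obstacle, is the residual integral $\int_\Omega\phi\,\nabla u_\Delta\cdot\nabla\eta\,\dx$, which carries a full gradient of $\eta=\gamma_1-\gamma_2$. A direct estimate would cost $\|\nabla\eta\|_{L^\infty}$, strictly worse than the claimed $\|\eta\|_{L^\infty}$; the very purpose of subtracting the principal part $\gamma_i\Lambda^1$ is to isolate this term so that its reduction to $\|\eta\|_{L^\infty}$ becomes possible. I would integrate by parts to shift the derivative off $\eta$, using that $u_\Delta$ is harmonic so that $\operatorname{div}(\phi\nabla u_\Delta)=\nabla\phi\cdot\nabla u_\Delta$: the interior contribution $-\int_\Omega\eta\,\nabla\phi\cdot\nabla u_\Delta\,\dx$ is then immediately $O(\|\eta\|_{L^\infty})$, while the surviving boundary pairing has to be controlled in the measure-free setting through the trace-modulation estimates of Lemmas~\ref{Lem:InequalityB} and~\ref{Lem:InequalityB'}, balancing their $L^\infty$ and $W^{1,\infty}$ contents in the spirit of the interpolation of~\cite[Lemma~3.4]{sylvester_inverse_1988}. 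It is precisely here that the factor $(1+\|\nabla\gamma_1\|_{L^\infty}/\inf\gamma_1+\|\nabla\gamma_2\|_{L^\infty}/\inf\gamma_2)$ is generated. Collecting the interior and boundary contributions and taking the supremum over $\phi$ with $\|\Tr\phi\|_{\Hp}\le1$ yields \eqref{Eq:EstimInterpolation}; recovering the loss from $\|\nabla\eta\|_{L^\infty}$ to $\|\eta\|_{L^\infty}$ without a surface measure is the only genuinely delicate point.
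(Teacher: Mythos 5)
Your proof of \eqref{Eq:EstimDirect} is correct and in the same spirit as the paper's: both reduce the matter to the integral identity of Proposition~\ref{Prop:P-S}(ii) together with the gradient bound \eqref{Eq:EstimGrad-u}. The paper instead tests the quadratic form $\langle(\Lambda^{\gamma_1}-\Lambda^{\gamma_2})f,f\rangle$ with a single datum and splits $\gamma_1|\nabla u_1|^2-\gamma_2|\nabla u_2|^2=(\gamma_1-\gamma_2)|\nabla u_1|^2+\gamma_2\nabla(u_1+u_2)\cdot\nabla(u_1-u_2)$, so it also needs $u_1-u_2=v_1-v_2$ and the difference estimate \eqref{Eq:Estim-vDiff}; your polarized version with independent data $f,g$ bypasses that and yields a bound at least as strong as the stated one. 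No objection there.

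For \eqref{Eq:EstimInterpolation}, however, your route diverges from the paper's, and the step you yourself flag as ``the only genuinely delicate point'' is a genuine gap rather than a technicality. After integrating by parts in $\int_\Omega\phi\,\nabla u_\Delta\cdot\nabla\eta\,\dx$, the surviving term is the pairing $\langle\ddn{u_\Delta},\Tr(\eta\phi)\rangle_{\Hm,\Hp}$, and the only available control on $\|\Tr(\eta\phi)\|_{\Hp}$ is Lemma~\ref{Lem:InequalityB}, which costs the full $\|\eta\|_{W^{1,\infty}}$ --- exactly the $\|\nabla\eta\|_{L^\infty}$ you were trying to eliminate. The interpolation between the $L^\infty$ and $W^{1,\infty}$ contents that you invoke from~\cite[Lemma 3.4]{sylvester_inverse_1988} rests on identifying the trace space with $H^{\frac12}(\bord)$ and interpolating between $L^2(\bord,\sigma)$ and $H^1(\bord,\sigma)$; the paper explicitly notes, just before Lemma~\ref{Lem:InequalityB}, that this identification fails for admissible domains, which is precisely why Lemma~\ref{Lem:InequalityB} carries the full $W^{1,\infty}$ norm. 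The paper's own proof does not attempt your reduction: it writes $\gamma_1\ddn{v_1}-\gamma_2\ddn{v_2}=(\gamma_1-\gamma_2)\ddn{v_1}+\gamma_2\ddn{(v_1-v_2)}$, applies Lemma~\ref{Lem:InequalityB'} to each term (thereby accepting the factor $\|\gamma_1-\gamma_2\|_{W^{1,\infty}}$ on the first), bounds $\|\ddn{w}\|_{\Hm}\le c(\widO)\|\Delta w\|_{L^2(\Omega)}$ for the relevant $w\in H^1_0(\Omega)$, and concludes with \eqref{Eq:EstimLap-v} and \eqref{Eq:EstimLapDiff}. If your aim is to reproduce the paper's argument you should follow that chain; if your aim is to obtain the right-hand side with only $\|\gamma_1-\gamma_2\|_{L^\infty}$, your outline does not supply the missing estimate --- and note that the paper's own proof likewise retains $\|\gamma_1-\gamma_2\|_{W^{1,\infty}}$ and, through \eqref{Eq:EstimLapDiff}, $\|\nabla(\gamma_1-\gamma_2)\|_{L^\infty}$, so the sharper dependence you are after is not delivered by the paper's argument either.
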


\begin{proof}
Let $f\in\Hp$. Denote by $u_1$ and $u_2$ the weak solutions to the Dirichlet problem~\eqref{Eq:ConductivityDir} associated to $\gamma_1$ and $\gamma_2$ respectively. Then, by Green's formula, it holds:
\begin{equation*}
\big\langle(\Lambda^{\gamma_1}-\Lambda^{\gamma_2})f,f\big\rangle_{\Bcal'\!,\,\Bcal}
=\int_\Omega\big[(\gamma_1-\gamma_2)|\nabla u_1|^2+\gamma_2\nabla(u_1+u_2)\cdot\nabla(u_1-u_2)\big]\,\dx.
\end{equation*}
Equations~\eqref{Eq:EstimGrad-u} and~\eqref{Eq:Estim-vDiff} and the fact that $u_1-u_2=v_1-v_2$ yield~\eqref{Eq:EstimDirect}.

Since for $i\in\{1,2\}$, $(\Lambda^{\gamma_i}-\gamma_i\Lambda^1)f=\gamma_i\ddn{v_i}$ and by Lemma~\ref{Lem:InequalityB'}, it holds
\begin{align*}
\|(\Lambda^{\gamma_1}-\gamma_1\Lambda^1)-(\Lambda^{\gamma_2}-\gamma_2\Lambda^1)\| &=\left\|\gamma_1\ddn{v_1}-\gamma_2\ddn{v_2}\right\|_{\Hm} \\
&\le \sqrt2\|\gamma_1-\gamma_2\|_{W^{1,\infty}(\Omega)}\left\|\ddn{v_1}\right\|_{\Hm}\\
& \qquad +\sqrt2\|\gamma_2\|_{W^{1,\infty}(\Omega)}\left\|\ddn{(v_1-v_2)}\right\|_{\Hm}.
\end{align*}
Since the problem $\Delta u=\psi\in L^2(\Omega)$ is well-posed on $H^1_0(\Omega)$ and by Green's formula, it holds
\begin{align*}
\left\|\ddn{v_1}\right\|_{\Hm} &\le \|\nabla^*\nabla v_1\|_{H^1(\Omega)}+\|\Delta v_1\|_{L^2(\Omega)}\\
&\le \|\nabla v_1\|_{L^2(\Omega)^n}+\|\Delta v_1\|_{L^2(\Omega)}\\
&\le c(\widO)\|\Delta v_1\|_{L^2(\Omega)},
\end{align*}
given that $\|\nabla^*\|=\|\nabla\|\le 1$, where $\nabla^*$ is the adjoint to $\nabla:H^1(\Omega)\to L^2(\Omega)^n$, and similarly,
\begin{equation*}
\left\|\ddn{(v_1-v_2)}\right\|_{\Hm}\le c(\widO) \|\Delta(v_1-v_2)\|_{L^2(\Omega)},
\end{equation*}
which allows to conclude by~\eqref{Eq:EstimLap-v} and~\eqref{Eq:EstimLapDiff}.
\end{proof}

\begin{remark}
In Corollary~\ref{Cor:EstimateDiff} and Theorem~\ref{Th:Calderon-Direct}, the conductivities $\gamma_1$ and $\gamma_2$ play symmetric parts.
For that matter, the estimates stated there can be refined by considering the minimum between the right-hand sides in Eqs.~\eqref{Eq:Estim-vDiff},~\eqref{Eq:EstimDirect} and~\eqref{Eq:EstimInterpolation} and their respective variants replacing $(\gamma_1,\gamma_2)$ with $(\gamma_2,\gamma_1)$.
\end{remark}

\subsection{Inverse problem}\label{Subsec:Inverse-Problem}

We turn to the inverse problem, which consists in reconstructing the conductivity using the Poincaré-Steklov operator, that is proving the injectivity of the operator
\begin{equation}\label{Eq:Mapping-Inverse-Problem}
\Lambda^\gamma\in\mathcal{L}(\Hp,\Hm)\longmapsto\gamma\in\Tcal(\Omega).
\end{equation}
Although we must at least assume $\gamma\in\Tcal(\Omega)$ for it is the class of conductivities for which the Poincaré-Steklov operator was defined on admissible domains (Definition~\ref{Def:P-S}), the results we prove here will always be conditional in the sense that further hypotheses must be made on $\gamma$ for the statements to hold (namely regularity, ellipticity and boundedness).
Without those restrictions, the inverse problem is known to be ill-posed, as several conductivities can yield the same operator~\cite[p. 156]{alessandrini_stable_1988}.

\subsubsection{Stability at the boundary}\label{Subsubsec:Inverse-Stability-Boundary}

In this section, we prove the stability at the boundary of the inverse conductivity problem.
To do so, we adapt the methods used in~\cite{alessandrini_singular_1990}, including the singular solutions introduced there.
A condition however to the existence of those solutions is that the conductivity $\gamma$ -- which is in $W^{1,\infty}$ in~\cite{alessandrini_singular_1990} -- be defined on all of $\R^n$, rather than on $\Omega$ alone.
It was proved in~\cite[Theorem 7]{hajlasz_sobolev_2008} that such an extension can be performed if and only if $\Omega$ is \textit{uniformly locally quasi-convex}, meaning there exist constants $\varepsilon,\delta>0$ such that for all $x,y\in\Omega$ with $|x-y|<\delta$, there exists a rectifiable curve $\gamma\subset\Omega$ joining $x$ and $y$ of length inferior to $\varepsilon|x-y|$.
To circumvent this restriction, we directly assume the conductivities are Lipschitz.

\begin{lemma}[Singular solutions]\label{Lem:SingularSol}
Let $\Omega$ be an arbitrary domain of $\R^n$. Let $\ell,L>0$ and $\gamma_1,\gamma_2\in\mathrm{Lip}(\Omega)$ with $\ell\le\gamma_{1,2}$ and $\|\gamma_{1,2}\|_{W^{1,\infty}(\Omega)}\le L$. Let $z\in \overline{\Omega}^c$. There exist $u_1,u_2\in H^1(\Omega)$ such that $\nabla\cdot(\gamma_1\nabla u_1)=\nabla\cdot(\gamma_2\nabla u_2)=0$ weakly on $\Omega$ and constants $C,r_0>0$ such that
\begin{align*}
\forall x\in \Omega,\quad &|\nabla u_{1,2}(x)|\le C|x-z|^{-n},\\
\forall x\in B_{r_0}(z)\cap\Omega,\quad &(\nabla u_1\cdot\nabla u_2) (x)\ge |x-z|^{-2n},
\end{align*}
where $C$ and $r_0$ depend only on $n$, $\ell$, $L$ and $\diamO$.
\end{lemma}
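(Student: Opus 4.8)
The plan is to construct the singular solutions $u_1, u_2$ by perturbing an explicit fundamental-type singular solution of a constant-coefficient problem, following the construction of Alessandrini~\cite{alessandrini_singular_1990}. First I would fix the singularity point $z \in \overline{\Omega}^c$ and, after freezing the coefficient at a nearby boundary value (say $\gamma_0 := \gamma_1(x_0)$ for $x_0 \in \partial\Omega$ closest to $z$, or simply a constant in the range $[\ell, L]$), write down the leading singular term. The natural candidate is of the form
\begin{equation*}
w(x) = \nabla_z \Phi(x-z) \cdot a = c_n \frac{(x-z)\cdot a}{|x-z|^n},
\end{equation*}
where $\Phi(x) = c_n|x|^{2-n}$ is the fundamental solution of the Laplacian and $a \in \R^n$ is a fixed unit vector; this $w$ is harmonic in $\Omega$, has gradient of size $|x-z|^{-n}$, and satisfies $\nabla w(x) \approx |x-z|^{-n}$ times a definite quadratic form in the direction of $a$ near $z$. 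For the lower bound on $\nabla u_1 \cdot \nabla u_2$, I would choose the directions $a_1, a_2$ for the two solutions (for instance $a_1 = a_2 = a$, or adjusting signs) so that the product of the two leading gradients is positive and bounded below by $|x-z|^{-2n}$ in a cone or ball $B_{r_0}(z)\cap\Omega$.

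The main work is then to correct $w$ into an exact solution of the variable-coefficient equation $\nabla\cdot(\gamma_i \nabla u_i)=0$ while controlling the size of the correction. I would set $u_i = w_i + v_i$, where $w_i$ is the explicit singular term above and $v_i$ is a remainder solving
\begin{equation*}
\nabla\cdot(\gamma_i \nabla v_i) = -\nabla\cdot(\gamma_i \nabla w_i) = -\nabla\cdot\big((\gamma_i-\gamma_0)\nabla w_i\big)
\end{equation*}
on $\Omega$ (using that $w_i$ is $\gamma_0$-harmonic, i.e. harmonic since $\gamma_0$ is constant). The right-hand side involves the difference $\gamma_i - \gamma_0$, which by Lipschitz continuity is $O(|x-z|)$ near $z$, so it damps the $|x-z|^{-n-1}$ singularity of $\nabla w_i$ by one order, making the source term borderline integrable in the relevant weighted spaces. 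The key estimate is a weighted a priori bound on $\nabla v_i$ showing it is lower-order than $\nabla w_i$, namely $|\nabla v_i(x)| = o(|x-z|^{-n})$ or at worst $\le C|x-z|^{-n}$ with a constant that can be made small on a neighbourhood of $z$. This is where the Lipschitz hypothesis $\|\gamma_i\|_{W^{1,\infty}(\Omega)}\le L$ and the lower bound $\ell \le \gamma_i$ enter decisively, and where the constants $C, r_0$ acquire their stated dependence on $n, \ell, L, \diamO$.

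The hard part will be obtaining the weighted pointwise gradient estimates for the remainder $v_i$ uniformly in $z$, since standard energy estimates only control $\|v_i\|_{H^1}$ and not the pointwise decay of $\nabla v_i$ near the singularity. On a general domain $\Omega$ (the lemma assumes only that $\Omega$ is an arbitrary domain), one cannot invoke boundary regularity, so the correct route is to work with the full-space problem: extend $\gamma_i$ to a Lipschitz function on $\R^n$ with the same bounds (possible by the standard Lipschitz extension, e.g. the McShane–Whitney extension, which preserves the Lipschitz constant and can be truncated to respect $\ell$ and $L$), solve the corrector equation on $\R^n$ using the fundamental solution / Green's function for the constant-coefficient operator, and estimate the resulting Newtonian-type potential. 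The singular integral
\begin{equation*}
v_i(x) = \int_{\R^n} \Phi_{\gamma_0}(x-y)\,\nabla\cdot\big((\gamma_i-\gamma_0)\nabla w_i\big)(y)\,\dy,
\end{equation*}
with $\Phi_{\gamma_0}$ the fundamental solution of $\gamma_0\Delta$, then admits Calderón–Zygmund and weighted potential estimates; the gain of one power of $|y-z|$ from $\gamma_i - \gamma_0 = O(|y-z|)$ is exactly what yields $|\nabla v_i| \le C|x-z|^{-n}$ with $C$ absorbing the lower-order correction on $B_{r_0}(z)$. Once $r_0$ is chosen small enough that the corrector's contribution to $\nabla u_1\cdot\nabla u_2$ is dominated by the explicit product $\nabla w_1\cdot\nabla w_2 \ge 2|x-z|^{-2n}$ (say), the stated lower bound follows, and the global gradient bound $|\nabla u_{1,2}|\le C|x-z|^{-n}$ holds on all of $\Omega$ by combining the explicit estimate for $w_i$ with the corrector bound. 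The uniformity of $C$ and $r_0$ in $z$ is ensured because all constants depend only on the dilation-invariant data $n, \ell, L$ and on $\diamO$ through the size of the region where the potential estimates are carried out.
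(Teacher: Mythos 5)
Your proposal is correct and follows essentially the same route as the paper: the paper's proof consists precisely of extending the Lipschitz conductivities to $\R^n$ by a Whitney-type extension and then invoking Alessandrini's construction of singular solutions (his Lemma~3.1 with $m=1$), whose leading term is the homogeneous degree-$(1-n)$ harmonic function you wrote down and whose corrector is controlled by exactly the weighted potential estimates you describe. Your sketch simply unpacks the cited construction, including the observation that the constants depend on $\Omega$ only through $\diamO$ and that no boundary regularity is needed.
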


\begin{proof}
Since the conductivities are Lipschitz on $\Omega$, a Whitney type extension can be performed to define them on $\R^n$ (see~\cite[Section VI, Theorem 3]{stein_singular_1970}).
For the construction of the solutions, see~\cite[Lemma 3.1]{alessandrini_singular_1990} with $m=1$, which does not depend on the boundary regularity.
Note that the dependence on $\Omega$ of the constants can be refined as a dependence on its diameter, for one only needs to find a radius $R$ large enough in~\cite[Theorem 1.1]{alessandrini_singular_1990} so that a ball of radius $R$ contains $\Omega$.
\end{proof}

The solutions from Lemma~\ref{Lem:SingularSol} are referred to as `singular' for the inner product of their gradients blows up near a fixed \textit{singularity point}, denoted above by $z\in\overline{\Omega}^c$.
Considering singular solutions with a singularity approaching a boundary point in a way which is not `too tangential' (in the sense of~\eqref{eq:Corkscrew} below) allows to prove an \textit{a priori} estimate on the conductivities at the boundary based on the associated Poincaré-Steklov operators, that is the stability of the inverse conductivity problem.
In the case of a Lipschitz domain~\cite{alessandrini_singular_1990}, that condition described by~\eqref{eq:Corkscrew} is verified. The boundary can even be endowed with a vector field which is quasi-normal, in the sense that its inner product with the normal vector field is bounded below by a postive constant.
This allows to mimic a $C^\infty$ boundary and perform that non-tangential approach of the singularity point.
This method relies on the fact that Lipschitz boundaries are smooth almost everywhere (with respect to the $(n-1)$-dimensional surface measure), which gives `enough room' to smoothen the sudden changes in normal direction at the corners.

In the proof of the following theorem, we will make use of the fact that any one-sided extension domain $\Omega$ is an $n$-set~\cite{hajlasz_sobolev_2008}, and denote by $c_\Omega>0$ the constant involved in~\eqref{Eq:n-set}, which depends only on $\Omega$ (and $n$). 

\begin{theorem}[Stability at the boundary]\label{Th:Calderon-Inv-Stability-Boundary}
Let $\Omega$ be an admissible domain of $\R^n$ such that
\begin{equation}\label{eq:Corkscrew}
\exists\delta,\rho>0,\;\forall x_0\in\bord,\;\forall r<\rho,\;\exists z\in \overline{\Omega}^c,\quad \delta r < d(z,\bord) \le |z-x_0| < r. 
\end{equation}
Let $\ell, L>0$. Let $\gamma_1,\gamma_2\in \mathrm{Lip}(\Omega)$ be such that $\ell\le\gamma_{1,2}$ and $\|\gamma_{1,2}\|_{W^{1,\infty}(\Omega)}\le L$. Then, it holds
\begin{equation}
\|\gamma_1-\gamma_2\|_{L^\infty(\bord)}\le c\,\|\Lambda^{\gamma_1}-\Lambda^{\gamma_2}\|_{\mathcal{L}(\Hp,\Hm)},
\end{equation}
where $c>0$ depends on $\ell$, $L$, $n$, $\diamO$ and $c_\Omega$ from~\eqref{Eq:n-set}.
\end{theorem}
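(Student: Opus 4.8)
The plan is to test the integral identity of Proposition~\ref{Prop:P-S}(ii) against the singular solutions of Lemma~\ref{Lem:SingularSol}, letting the singularity $z$ approach a fixed boundary point $x_0\in\bord$ non-tangentially, as the corkscrew condition~\eqref{eq:Corkscrew} permits. Fix $x_0\in\bord$ and set $\sigma:=(\gamma_1-\gamma_2)(x_0)$, using that Lipschitz conductivities extend continuously to $\overline{\Omega}$. For $r<\rho$, choose $z=z(r)\in\overline{\Omega}^c$ with $\delta r<d:=d(z,\bord)\le|z-x_0|<r$, and let $u_1,u_2$ be the associated singular solutions, so that $\nabla\cdot(\gamma_i\nabla u_i)=0$ and, by Proposition~\ref{Prop:P-S}(ii),
\begin{equation*}
\int_\Omega(\gamma_1-\gamma_2)\nabla u_1\cdot\nabla u_2\,\dx=\big\langle(\Lambda^{\gamma_1}-\Lambda^{\gamma_2})\Tr u_1,\Tr u_2\big\rangle_{\Hm,\,\Hp}.
\end{equation*}
The strategy is to bound the right-hand side above by $c\,\|\Lambda^{\gamma_1}-\Lambda^{\gamma_2}\|\,d^{-n}$ and the left-hand side below by $|\sigma|\,c_1 d^{-n}$ minus lower-order terms; multiplying by $d^n$ and letting $d\to0$ then isolates $|\sigma|$.

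For the upper bound I would estimate the trace norms. Since the pairing depends only on gradients, I may replace each $u_i$ by its mean-zero version without altering it (constants lie in the kernel of every $\Lambda^{\gamma_i}$, as the corresponding solution is constant and has vanishing normal derivative), and then use the contraction $\|\Tr u_i\|_{\Hp}\le\|u_i\|_{H^1(\Omega)}$ together with $|\nabla u_i(x)|\le C|x-z|^{-n}$. Because $z\notin\overline{\Omega}$ forces $|x-z|\ge d$ for all $x\in\Omega$, a radial computation gives $\int_\Omega|x-z|^{-2n}\,\dx\le c(n)\,d^{-n}$, hence $\|\nabla u_i\|_{L^2(\Omega)^n}\le c\,d^{-n/2}$; a Poincar\'e-Wirtinger inequality on the bounded connected extension domain $\Omega$ (which follows from the compact embedding $H^1(\Omega)\hookrightarrow L^2(\Omega)$) upgrades this to $\|u_i\|_{H^1(\Omega)}\le c\,d^{-n/2}$. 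Thus the right-hand side is at most $c\,\|\Lambda^{\gamma_1}-\Lambda^{\gamma_2}\|\,d^{-n}$.

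For the lower bound I would split the left-hand integral as $\sigma I+E_1+E_2$, with $I=\int_{B_{r_0}(z)\cap\Omega}\nabla u_1\cdot\nabla u_2\,\dx$, where $E_1$ collects the Lipschitz defect $(\gamma_1-\gamma_2)-\sigma$ on $B_{r_0}(z)\cap\Omega$ and $E_2$ is the integral over $\Omega\setminus B_{r_0}(z)$. Using $\nabla u_1\cdot\nabla u_2\ge|x-z|^{-2n}$ on $B_{r_0}(z)\cap\Omega$ and the $n$-set property~\eqref{Eq:n-set} applied at the boundary point nearest to $z$ (which yields a full $n$-dimensional chunk of $\Omega$ in the shell $d\le|x-z|\le 2d$, where the integrand is of order $d^{-2n}$), one obtains $I\ge c_1 d^{-n}$ with $c_1=c_1(n,c_\Omega)>0$. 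The bound $|(\gamma_1-\gamma_2)(x)-\sigma|\le 2L|x-x_0|$ together with $|x-x_0|\le|x-z|+d/\delta$ gives $|E_1|\le c\,L\,d^{-n+1}$, while on $\Omega\setminus B_{r_0}(z)$ the factor $|x-z|^{-2n}\le r_0^{-2n}$ and $|\Omega|<\infty$ yield $|E_2|\le c\,L$, constant in $d$. Since $I>0$, the left-hand side is therefore at least $|\sigma|\,c_1 d^{-n}-c\,L\,d^{-n+1}-c\,L$.

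Combining the two bounds, multiplying through by $d^n$, and letting $r\to0$ (so $d\to0$) sends the $O(d)$ and $O(d^n)$ remainders to zero and leaves $|\sigma|\,c_1\le c\,\|\Lambda^{\gamma_1}-\Lambda^{\gamma_2}\|$; taking the supremum over $x_0\in\bord$ gives the claim, with a constant depending only on $n$, $\ell$, $L$, $\diamO$ and $c_\Omega$ (the corkscrew constants $\delta,\rho$ enter only through the vanishing remainders and the range of $r$). The main obstacle is precisely the lower estimate $I\ge c_1 d^{-n}$: it is here that the corkscrew condition is indispensable, guaranteeing that the singularity reaches $x_0$ non-tangentially so that, quantified by the $n$-set property, a definite $n$-dimensional portion of $\Omega$ lies at distance comparable to $d$ from $z$. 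The remaining work is bookkeeping—keeping the principal term at order $d^{-n}$ strictly dominant over the $O(d^{-n+1})$ and $O(1)$ errors so that the limit cleanly isolates $|\sigma|$.
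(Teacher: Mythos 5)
Your proposal is correct and follows essentially the same route as the paper's proof: corkscrew points $z$ approaching a boundary maximizer of $|\gamma_1-\gamma_2|$, the singular solutions of Lemma~\ref{Lem:SingularSol} fed into the identity of Proposition~\ref{Prop:P-S}(ii), the $n$-set property~\eqref{Eq:n-set} to force the main term $\gtrsim d^{-n}$, the Lipschitz modulus to control the defect at order $d^{-n+1}$, and multiplication by $d^{n}$ followed by $d\to0$. The only cosmetic difference is that you decompose the left-hand side as $\sigma I+E_1+E_2$ whereas the paper integrates the pointwise inequality $\|\gamma_1-\gamma_2\|_{L^\infty(\bord)}\le(\gamma_1-\gamma_2)(x)+2L|x-x_0|$ against the positive kernel $|x-z|^{-2n}$; these are equivalent bookkeeping choices.
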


\begin{proof}
Let $x_0\in\bord$ be such that $\|\gamma_1-\gamma_2\|_{L^\infty(\bord)}=|(\gamma_1-\gamma_2)(x_0)|$. Up to swapping the indices, we may assume $(\gamma_1-\gamma_2)(x_0)>0$. Since $\gamma_1-\gamma_2$ is $L$-Lipschitz continuous, it holds
\begin{equation*}
\forall x\in\overline{\Omega},\quad \|\gamma_1-\gamma_2\|_{L^\infty(\bord)}\le (\gamma_1-\gamma_2)(x)+2L|x-x_0|.
\end{equation*}
For $k\in\N$ large enough (so that $2^{-k}<\rho$), let $z_k\in\overline{\Omega}^c$ satisfy~\eqref{eq:Corkscrew} for $x_0$ and $r=2^{-k}$, and denote $\sigma_k:=|z_k-x_0|$. In other words, it holds
\begin{equation}\label{Eq:Corkscrew-Sigma_k}
\delta 2^{-k}<d(z_k,\bord)\le |z_k-x_0|=\sigma_k< 2^{-k}.
\end{equation}
Let $u_1^{(k)},u_2^{(k)}\in H^1(\Omega)$ be singular solutions at $z_k$ (in the sense of Lemma~\ref{Lem:SingularSol}) with $\nabla\cdot(\gamma_i\nabla u_i^{(k)})=0$ on $\Omega$ such that $\int_{\Omega}{u_i^{(k)}\,\dx}=0$ for $i\in\{1,2\}$.
Let $r>0$ be small enough (so that Lemma~\ref{Lem:SingularSol} applies). It holds:
\begin{align*}
\int_{\Omega\cap B_r(z_k)}{|x-z_k|^{-2n}\,\dx}&\le \int_{\Omega\cap B_r(z_k)}{\nabla u_1^{(k)}\cdot\nabla u_2^{(k)}\,\dx}\\
&\le \int_{\Omega}{\nabla u_1^{(k)}\cdot\nabla u_2^{(k)}\,\dx}+\int_{\Omega\backslash B_r(z_k)}{|\nabla u_1^{(k)}\cdot\nabla u_2^{(k)}|\,\dx},
\end{align*}
hence, by Lemma~\ref{Prop:P-S},
\begin{align}\label{Eq:Direct-Prob-Calc}
\|\gamma_1-\gamma_2\|_{L^\infty(\bord)}&\int_{\Omega\cap B_r(z_k)}{|x-z_k|^{-2n}\,\dx} \nonumber \\
&\le \|\Lambda^{\gamma_1}-\Lambda^{\gamma_2}\|\|\Tr  u_1^{(k)}\|_{\Hp}\|\Tr  u_2^{(k)}\|_{\Hp} \nonumber\\
&\qquad +\int_{\Omega\backslash B_r(z_k)}{|(\gamma_1-\gamma_2)(x)||x-z_k|^{-2n}\,\dx} \nonumber\\
& \qquad +2L\int_{\Omega\cap B_r(z_k)}{|x-x_0||x-z_k|^{-2n}\,\dx}.
\end{align}
By Lemma~\ref{Lem:SingularSol}, it holds $$\|\Tr  u_{1,2}^{(k)}\|^2_{\Hp}\le \|u_{1,2}^{(k)}\|^2_{H^1(\Omega)}\le c(n,\ell,L,\diamO)\,\sigma_k^{-n},$$
Moreover, for $x\in\Omega$, 
$$|x-z_k|\ge d(z_k,\Omega)>\delta \sigma_k,$$
by~\eqref{Eq:Corkscrew-Sigma_k}, hence
\begin{align*}
\int_{\Omega\cap B_r(z_k)}{|x-z_k|^{-2n}|x-x_0|\,\dx}&\le \int_{\Omega\cap B_r(z_k)}{|x-z_k|^{-2n}(|x-z_k|+\sigma_k)\,\dx}\\
&\le \int_{\R^n\backslash B_{\delta \sigma_k}(z_k)}{|x-z_k|^{-2n}(|x-z_k|+\sigma_k)\,\dx}\\
&\le \frac{(1+\delta)n-1}{n(n-1)}\sigma_k^{-n+1},\\
\end{align*}
and for $k$ large enough, $2\sigma_k<r$, so that
\begin{align}\label{Eq:MinorationSigmaK}
\int_{\Omega\cap B_r(z_k)}{|x-z_k|^{-2n}\,\dx}&\ge \int_{\Omega\cap B_{2\sigma_k}(z_k)}{|x-z_k|^{-2n}\,\dx}\nonumber\\
&\ge (2\sigma_k)^{-2n}\,|\Omega\cap B_{2\sigma_k}(z_k)|.
\end{align}
For $k\in\N$ large enough, since $\Omega$ is connected, there exists $\zeta_{k+1}\in\Omega$ such that $|x_0-\zeta_{k+1}|\le \frac{\sigma_k}{2}$. Then $|\zeta_{k+1}-z_k|\le\frac{3}{2}\sigma_k$, hence by~\eqref{Eq:MinorationSigmaK},
\begin{equation*}
\int_{\Omega\cap B_r(z_k)}{|x-z_k|^{-2n}\,\dx}
\ge (2\sigma_k)^{-2n}\,|B_{\frac{\sigma_k}{2}}(\zeta_k)\cap\Omega|\ge 2^{-3n}c_\Omega\sigma_k^{-n},
\end{equation*}
where $c_\Omega$ from~\eqref{Eq:n-set} depends only on $n$ and $\Omega$. Moreover,
\begin{equation*}
\int_{\Omega\backslash B_r(z_k)}{|x-z_k|^{-2n}\,\dx}\le |\Omega|r^{-2n}.
\end{equation*}
Therefore, multiplying both sides of~\eqref{Eq:Direct-Prob-Calc} by $\sigma_k^n$ yields: 
\begin{equation*}
\|\gamma_1-\gamma_2\|_{L^\infty(\bord)}\le 2^{3n}\frac{C}{c_\Omega}\|\Lambda^{\gamma_1}-\Lambda^{\gamma_2}\|_{\mathcal{L}(\Hp,\Hm)} + \underset{k\to\infty}{o}(1),
\end{equation*}
where $C>0$ comes from Lemma~\ref{Lem:SingularSol}, hence the estimate.
\end{proof}

Condition~\eqref{eq:Corkscrew} is known as the \textit{corkscrew condition} for $\overline{\Omega}^c$~\cite{azzam_new_2017,jerison_boundary_1982, nystrom_integrability_1996}.
Examples of domains satisfying that condition are Lipschitz domains and, more generally, NTA domains~\cite{nystrom_integrability_1996} such as the admissible domain from Figure~\ref{Fig:AdmissibleDomains}.

\subsubsection{Identification on the domain}\label{Subsubsec:Inverse-Identification-Domain}

Having proved the stability at the boundary for the inverse problem, we turn to the determination of the conductivity on the whole domain.
To deal with a such problem, it is usual to rely on the equivalence between the conductivity equation and Schrödinger's equation~\cite{alessandrini_determining_1991, caro_global_2016, sylvester_inverse_1988}, in the sense that $u\in H^1(\Omega)$ satisfies $\nabla\cdot(\gamma\nabla u)=0$ weakly on $\Omega$ is and only if $v:=\sqrt\gamma u$ is such that
\begin{equation}\label{Eq:Schrodinger}
-\Delta v+qv=0\qquad \mbox{weakly on }\Omega,
\end{equation}
where
\begin{equation}\label{Eq:q}
q:=\frac{\Delta(\sqrt\gamma)}{\sqrt\gamma}.
\end{equation}
The purpose is to use the existence of specific solutions to~\eqref{Eq:Schrodinger} when $n\ge3$, known as CGO (complex geometrical optics) or high frequency solutions ~\cite{sylvester_global_1987}, which are solutions of the form
\begin{equation}\label{Eq:CGO-form}
v_\xi(x)=e^{\xi\cdot x}(1+R(\xi,x)),\quad x\in\Omega,
\end{equation}
where $\xi\in\C^n$ is non-null and such that $\xi\cdot\xi=0$, and $R(\xi,\cdot)$ is `small' in some sense.

We update the results from~\cite{caro_global_2016} and prove the identification of the conductivity $\gamma$ on the domain based on the Poincaré-Steklov operator $\Lambda^\gamma$.
Formally, if $v_\xi$ is a CGO solution to~\eqref{Eq:Schrodinger}, it follows that $R(\xi,\cdot)$ is such that
\begin{equation*}
(-\Delta-2\xi\cdot\nabla+q)R(\xi,\cdot)=-q\quad\mbox{weakly on }\Omega,
\end{equation*}
so that the existence of a CGO solution can be linked to the differential operator $(-\Delta-2\xi\cdot\nabla+q)$.
An \textit{a priori} estimate for that operator on $\R^n$ is proved in~\cite[Proposition 2.4]{caro_global_2016}.
Beyond the properties of the conductivity, the estimate depends only on the diameter of the support of the solutions.
This allows to prove a counterpart of~\cite[Proposition 2.5]{caro_global_2016} on the existence of CGO solutions in the case of admissible domains, using the same arguments which rely mostly on the Riesz representation theorem.
In that case, and if $\Omega$ satisfies the exterior corkscrew condition~\eqref{eq:Corkscrew}, then two conductivities $\gamma_1,\gamma_2\in \mathrm{Lip}(\Omega)$ such that $0<\ell\le\gamma_{1,2}$ and $\Lambda^{\gamma_1}=\Lambda^{\gamma_2}$ can be extended to $\R^n$ by means of the same $\tilde\gamma\in \mathrm{Lip}(\overline{\Omega}^c)$ which is bounded below by $\ell$, equal to $1$ oustide of a certain ball and with a Lipschitz constant comparable to those of $\gamma_{1,2}$.
This follows from the fact that $\gamma_1|_{\bord}=\gamma_2|_{\bord}$ by Theorem~\ref{Th:Calderon-Inv-Stability-Boundary}, and a Whitney type extension as in~\cite[Section VI, Theorem 3]{stein_singular_1970}.
From there, taking the same steps as in~\cite[Section 3]{caro_global_2016} yields $\gamma_1=\gamma_2$ on the domain.

\begin{theorem}[Identification on the domain]\label{Th:Inverse-Identification-Domain}
Let $\Omega$ be an admissible domain of $\R^n$, $n\ge 3$. Assume $\Omega$ satisfies the exterior corkscrew condition~\eqref{eq:Corkscrew}. Let $\ell>0$, and $\gamma_1,\gamma_2\in \mathrm{Lip}(\Omega)$ be such that $\ell\le\gamma_{1,2}$. Then, if $\Lambda^{\gamma_1}=\Lambda^{\gamma_2}$, it holds $\gamma_1=\gamma_2$ on $\overline\Omega$.
\end{theorem}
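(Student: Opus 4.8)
The plan is to reduce the conductivity problem to a Schrödinger problem and exploit the high-frequency (CGO) solutions, adapting the scheme of~\cite{caro_global_2016} to the extension-domain framework, with the corkscrew condition~\eqref{eq:Corkscrew} supplying the boundary information that replaces the classical smooth boundary determination.

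\emph{Step 1 (boundary agreement and common extension).} First I would apply Theorem~\ref{Th:Calderon-Inv-Stability-Boundary} to $\gamma_1,\gamma_2$: since $\Lambda^{\gamma_1}=\Lambda^{\gamma_2}$, its right-hand side vanishes, forcing $\gamma_1=\gamma_2$ on $\bord$. This is the crucial input that the corkscrew condition buys us. With the two conductivities sharing the same boundary values, a Whitney-type extension (\cite[Section VI, Theorem 3]{stein_singular_1970}) produces a single $\tilde\gamma\in\mathrm{Lip}(\overline{\Omega}^c)$ extending both; I then extend $\gamma_1,\gamma_2$ to $\R^n$ by this common $\tilde\gamma$, so that the extended functions (still denoted $\gamma_1,\gamma_2$) are Lipschitz on $\R^n$, bounded below by $\ell$, equal to $1$ outside a fixed ball containing $\Omega$, and \emph{coincide outside $\Omega$}. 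The point of the common extension is that the associated Schrödinger potentials then agree outside $\Omega$, so their difference is supported in $\overline\Omega$; this removes the need for a separate matching of normal derivatives at the boundary.

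\emph{Step 2 (reduction and integral identity).} Through the substitution $v=\sqrt{\gamma_i}\,u$ the conductivity equation becomes the Schrödinger equation~\eqref{Eq:Schrodinger} with potential $q_i$ as in~\eqref{Eq:q}, understood distributionally since $\gamma_i$ is merely Lipschitz. Proposition~\ref{Prop:P-S}(ii) gives, for any solutions $u_1,u_2$ of the two conductivity equations, the identity $\int_\Omega(\gamma_1-\gamma_2)\nabla u_1\cdot\nabla u_2\,\dx=\langle(\Lambda^{\gamma_1}-\Lambda^{\gamma_2})\Tr u_1,\Tr u_2\rangle_{\Hm,\Hp}=0$; passing to the Schrödinger variables rewrites this as the vanishing of a suitable pairing of $q_1-q_2$ against the products $v_1v_2$, which, since $q_1-q_2$ is compactly supported in $\overline\Omega$, may be read over all of $\R^n$.

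\emph{Step 3 (CGO solutions and the Fourier limit).} For $n\ge3$ I would construct CGO solutions $v_\xi$ of the form~\eqref{Eq:CGO-form} with $\xi\in\C^n\setminus\{0\}$, $\xi\cdot\xi=0$, by solving $(-\Delta-2\xi\cdot\nabla+q_i)R(\xi,\cdot)=-q_i$ for the remainder. The decisive analytic input is the $\R^n$ a priori estimate for $-\Delta-2\xi\cdot\nabla+q$ from~\cite[Proposition 2.4]{caro_global_2016}, whose constant depends only on $n$, on $\ell,L$, and on the diameter of the support of the data; because it is an estimate on $\R^n$ depending only on that diameter, it transfers to our admissible domain once the conductivities have been extended, and the Riesz representation theorem then yields a remainder $R$ small in the relevant norm, exactly as in~\cite[Proposition 2.5]{caro_global_2016}. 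Choosing $\xi_1,\xi_2$ with $\xi_1+\xi_2=ik$ for a fixed $k\in\R^n$ and $|\xi_1|,|\xi_2|\to\infty$, inserting the corresponding CGO solutions into the identity of Step 2, and letting the frequency tend to infinity kills the remainder contributions and leaves $\widehat{q_1-q_2}(k)=0$ for every $k$, hence $q_1=q_2$ on $\R^n$. Since $\sqrt{\gamma_1}$ and $\sqrt{\gamma_2}$ then solve the same equation $\Delta w=q\,w$ and coincide outside $\Omega$, uniqueness forces $\sqrt{\gamma_1}=\sqrt{\gamma_2}$, that is $\gamma_1=\gamma_2$ on $\overline\Omega$.

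\emph{Main obstacle.} I expect the hardest part to be Step 3: transferring the CGO existence and remainder estimates to the admissible domain and carrying out the low-regularity bookkeeping. The a priori estimate of~\cite{caro_global_2016} is stated on $\R^n$, so everything hinges on the common extension of Step 1 and on the fact that the estimate's constant depends only on the diameter of the support of the data, which is what lets it pass to the extension-domain setting without any boundary regularity. In parallel, because $\gamma_i$ is only Lipschitz the potential $q_i$ is a distribution, so the integral identity of Step 2 and the limiting Fourier argument must be justified in a weak/dual formulation rather than with pointwise products, which is precisely where the careful duality of~\cite{caro_global_2016} is needed.
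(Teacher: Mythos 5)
Your proposal follows essentially the same route as the paper: boundary determination via Theorem~\ref{Th:Calderon-Inv-Stability-Boundary} under the corkscrew condition, a common Whitney-type extension of $\gamma_1,\gamma_2$ to $\R^n$ equal to $1$ outside a ball, CGO solutions built from the $\R^n$ \emph{a priori} estimate of~\cite[Proposition 2.4]{caro_global_2016} (whose constant depends only on the diameter of the support), and the high-frequency limit as in~\cite[Section 3]{caro_global_2016}. The paper itself only sketches this argument by deferring to Caro--Rogers, and your outline, including the low-regularity caveats about the distributional potential, matches it.
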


\subsubsection{Stability on the domain}\label{Subsubsec:Inverse-Stability-Domain}

In this part, we prove to the stability of the inverse problem on an admissible domain, that is the continuity of the mapping $\Lambda^\gamma\mapsto\gamma$. To do so, we adapt the method from~\cite{alessandrini_stable_1988,alessandrini_determining_1991} which consist in proving a stability estimate for the equivalent Schrödinger equation~\eqref{Eq:Schrodinger}. 
The associated Poincaré-Steklov operator is defined by
\begin{equation*}
\tilde{\Lambda}^q:\Tr  v\in \Hp \longmapsto \ddn{v}\in \Hm,
\end{equation*}
where $q$ is defined by~\eqref{Eq:q} and $v$ is a weak solution to~\eqref{Eq:Schrodinger}.
Once again, the idea is to use CGO solutions $v\in H^1(\Omega)$, which will lead to assuming $n\ge3$.
We define the following space of conductivities, constant near the boundary:
\begin{multline*}
\Ucal(\Omega):=\big\{\eta\in W^{2,\infty}(\Omega)\;\big|\;\operatorname{ess\,inf}\eta>0\quad\mbox{and}\\
\exists\, V \mbox{ neighborhood of }\bord\mbox{ in }\Omega,\; \eta|_{V}\mbox{ is constant}\big\}.
\end{multline*}
If $\gamma\in\Ucal(\Omega)$, then $q\in L^\infty(\Omega)$. The method used in~\cite{alessandrini_stable_1988,alessandrini_determining_1991} involves higher order normal derivatives of the conductivities, which -- to our knowledge -- have yet to be defined in the case of admissible domains.
For that matter, we assume the conductivities are constant near the boundary, so that their weak normal derivatives are null.

\begin{lemma}\label{Lem:NormalConstant}
Let $\Omega$ be an admissible domain of $\R^n$. Let $u\in H^1(\Omega)$ with $\Delta u\in L^2(\Omega)$ be such that $u$ is constant on a neighbourhood of $\bord$ in $\Omega$. Then $\ddn{u}|_{\bord}=0$.
\end{lemma}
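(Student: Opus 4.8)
The plan is to show directly that the defining pairing of $\ddn{u}$ vanishes. By Definition~\ref{def:ddn}, $\ddn{u}|_{\bord}$ is the element $\psi\in\Hm$ with $\langle\psi,\Tr v\rangle_{\Hm,\Hp}=\int_\Omega(\Delta u)v\,\dx+\int_\Omega\nabla u\cdot\nabla v\,\dx$ for every $v\in H^1(\Omega)$. Since $\Tr:H^1(\Omega)\to\Hp$ is onto (Theorem~\ref{Th:Trace}), it suffices to prove that this right-hand side is $0$ for all $v\in H^1(\Omega)$.

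First I would record the local consequences of the hypothesis. Let $V=U\cap\Omega$ be the neighbourhood of $\bord$ in $\Omega$ on which $u$ is constant, with $U\subset\R^n$ open and $U\supseteq\bord$. On $V$ one has $\nabla u=0$ a.e., and hence $\Delta u=0$ on $V$ as well (the distributional Laplacian of $u$ equals $\operatorname{div}\nabla u$, which is $0$ on the open set $V$). Consequently both $\nabla u$ and $\Delta u$ are supported in $K:=\Omega\setminus V=\overline\Omega\cap U^c$. Because $U$ is open and contains $\bord$, no boundary point lies in $U^c$, so $K$ is a closed and bounded — hence compact — subset of $\Omega$; that is, $K\Subset\Omega$.

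The core of the argument is a localization. I would choose a cutoff $\chi\in C_0^\infty(\Omega)$ with $0\le\chi\le1$ and $\chi\equiv1$ on an open neighbourhood $N$ of $K$. Then $\nabla\chi=0$ on $N$, while $\nabla u$ and $\Delta u$ vanish on $\Omega\setminus N\subseteq V$. A short computation using $\nabla(\chi v)=\chi\nabla v+v\nabla\chi$ shows that replacing $v$ by $\chi v$ leaves both integrals unchanged, i.e. $\int_\Omega(\Delta u)v\,\dx+\int_\Omega\nabla u\cdot\nabla v\,\dx=\int_\Omega(\Delta u)(\chi v)\,\dx+\int_\Omega\nabla u\cdot\nabla(\chi v)\,\dx$. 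Now $\chi v\in H^1(\Omega)$ has compact support in $\Omega$, hence $\chi v\in H^1_0(\Omega)=\operatorname{Ker}(\Tr)$ (Theorem~\ref{Th:Trace}). For such a test function the right-hand side vanishes: this is exactly the identity $\int_\Omega(\Delta u)w\,\dx+\int_\Omega\nabla u\cdot\nabla w\,\dx=0$ valid for every $w\in H^1_0(\Omega)$, which follows from the definition of $\Delta u\in L^2(\Omega)$ as the distributional Laplacian together with density of $C_0^\infty(\Omega)$ in $H^1_0(\Omega)$. Thus $\langle\psi,\Tr v\rangle=0$ for all $v$, and surjectivity of $\Tr$ gives $\psi=0$.

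I expect no deep obstacle here: the lemma is essentially the statement that the weak normal derivative cannot detect behaviour that is flat near the boundary. The only points requiring a little care are the verification that $K\Subset\Omega$ (which rests on boundedness of $\Omega$ and on $V$ being a genuine neighbourhood of $\bord$) and the observation that a compactly supported $H^1(\Omega)$ function lies in $H^1_0(\Omega)$; the vanishing of the pairing on $H^1_0(\Omega)$ is built into the very well-posedness of Definition~\ref{def:ddn}. The subtlety worth flagging is that I never subtract the constant value of $u$, so the argument covers equally the case where $u$ takes different constant values on different connected pieces of $V$, since all that is used is $\nabla u=0$ on $V$.
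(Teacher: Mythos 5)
Your proof is correct, and it takes a genuinely different route from the paper's. The paper approximates $\Omega$ from outside by smooth domains $\Omega_k$ with $\Omega=\bigcap_k\Omega_k$, extends $u$ by its constant value to $\Omega_0\setminus\Omega$ and $v$ via the extension operator, applies the classical Green formula on each $\Omega_k$ (where the classical normal derivative of the extended function vanishes on $\partial\Omega_k$), and passes to the limit by dominated convergence. You instead stay entirely inside $\Omega$: since $\nabla u$ and $\Delta u$ are supported in the compact set $K=\Omega\setminus V\Subset\Omega$, a cutoff $\chi\equiv 1$ near $K$ lets you replace the test function $v$ by $\chi v\in H^1_0(\Omega)$ without changing the Green pairing, and the pairing vanishes on $H^1_0(\Omega)$ by the definition of the distributional Laplacian --- equivalently, by the very well-definedness of the functional in Definition~\ref{def:ddn}. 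Your argument is more elementary and self-contained: it needs neither the smooth exhaustion nor any extension of $u$ or $v$ beyond $\Omega$, and, as you note, it covers the case where $u$ takes different constant values on different components of $V$, a case where the paper's step of extending $u$ by a constant to $\Omega_0\setminus\Omega$ would require extra care. What the paper's route buys is the geometric interpretation, exhibiting $\ddn{u}$ as a limit of classical normal derivatives on approximating smooth boundaries; your localization makes the purely variational content of the lemma transparent.
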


\begin{proof}
Consider a sequence of smooth domain $(\Omega_k)_{k\in\N}$ such that $\Omega=\bigcap_{k\in\N}\Omega_k$ (for instance, a $C^\infty$ approximation of the dyadic approximation of $\R^n\backslash\overline\Omega$ from~\cite[Theorem 2.1]{claret_convergence_2024}).
Then $u\in H^1(\Omega)$ constant near the boundary can be extended as $u_0\in H^1(\Omega_0$), constant on $\Omega_0\backslash\Omega$.
Let $v\in H^1(\Omega)$.
Since $\Omega$ is an extension domain, $v$ can be extended as $v_0\in H^1(\Omega_0)$.
Then, for $k\in\N$, $\frac{\partial u_0}{\partial n}|_{\bord_k}=0$ and Green's formula yields
\begin{equation*}
0=\int_{\Omega_k}{(\Delta u_0) v_0\,\dx}+\int_{\Omega_k}{\nabla u_0\cdot\nabla v_0\,\dx} \xrightarrow[k\to+\infty]{}\int_{\Omega}{(\Delta u) v\,\dx}+\int_{\Omega}{\nabla u\cdot\nabla v\,\dx},
\end{equation*}
by dominated convergence. This means that for all $v\in H^1(\Omega)$, it holds $\langle\ddn u|_{\bord},\Tr v\rangle_{\Hm,\Hp}=0$. Since the trace operator is surjective on $\Hp$, $\Tr (H^1(\Omega))=\Hp$, we can deduce $\ddn{u}|_{\bord}=0$.
\end{proof}

The idea is then to prove the stability estimate using a similar result for the equivalent Schrödinger's equation, using CGO solutions to~\eqref{Eq:Schrodinger} (of the form~\eqref{Eq:CGO-form}) as in~\cite{alessandrini_determining_1991,sylvester_global_1987}.

\begin{lemma}\label{Lem:CGOSolutions}
Let $\Omega$ be an admissible domain of $\R^n$, $n\ge3$. Let $\ell, L>0$. Let $\gamma\in \Ucal(\Omega)$ be such that $\ell\le\gamma$ and $\|\gamma\|_{W^{2,\infty}(\Omega)}\le L$. Define $q\in L^\infty(\Omega)$ by~\eqref{Eq:q}. For all $\xi\in\C^n$ non-null with $\xi\cdot\xi=0$, there exists a solution $v\in H^1(\Omega)$ to~\eqref{Eq:Schrodinger} in the form
\begin{equation*}
v(x)=e^{\xi\cdot x}(1+R(\xi,x)),\quad x\in\Omega,
\end{equation*}
where
\begin{equation}\label{Eq:CGO-R-Estimate}
\|R(\xi,\cdot)\|_{L^2(\Omega)}\le c(\Omega) \frac{\|q\|_{L^\infty(\Omega)}}{|\xi|}.
\end{equation}
The solution $v$ is said to be a complex geometrical optics (CGO) solution to~\eqref{Eq:Schrodinger}, and satisfies
\begin{equation*}
\|v\|_{H^1(\Omega)}\le c(\ell,L,\Omega)\, e^{c(\Omega)|\xi|}.
\end{equation*}
\end{lemma}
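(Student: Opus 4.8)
The plan is to construct the CGO solution by first building it on all of $\R^n$ via the Faddeev-type estimate and then restricting to $\Omega$. First I would extend the potential $q$ to $\R^n$: since $\gamma\in\Ucal(\Omega)$ is constant near $\bord$, we have $q=\Delta(\sqrt\gamma)/\sqrt\gamma=0$ in a neighborhood $V$ of $\bord$, so $q$ extended by $0$ outside $\Omega$ is a well-defined element of $L^\infty(\R^n)$ supported in $\overline\Omega$, with $\|q\|_{L^\infty(\R^n)}=\|q\|_{L^\infty(\Omega)}$. The diameter of its support is at most $\diamO$. This is precisely the setting needed to apply the $\R^n$ machinery, and the constants will depend only on that diameter and on the bounds on $\gamma$.

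Next I would invoke the a priori estimate for the operator $(-\Delta-2\xi\cdot\nabla+q)$ on $\R^n$, which the excerpt attributes to \cite[Proposition 2.4]{caro_global_2016} and whose constant depends only on the diameter of the support. Writing the ansatz $v_\xi(x)=e^{\xi\cdot x}(1+R(\xi,x))$ and substituting into \eqref{Eq:Schrodinger}, one checks that $R=R(\xi,\cdot)$ must solve
\begin{equation*}
(-\Delta-2\xi\cdot\nabla+q)R=-q\quad\text{weakly on }\R^n.
\end{equation*}
The a priori estimate furnishes a bounded right inverse for this operator acting between suitable weighted (or just $L^2$ on a ball) spaces, with operator norm $O(1/|\xi|)$. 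Applying it to the right-hand side $-q$ produces $R$ with
\begin{equation*}
\|R(\xi,\cdot)\|_{L^2}\le \frac{c}{|\xi|}\,\|q\|_{L^\infty},
\end{equation*}
which is exactly \eqref{Eq:CGO-R-Estimate}; restricting to $\Omega$ gives the stated $L^2(\Omega)$ bound. This step mirrors \cite[Proposition 2.5]{caro_global_2016}, and as the excerpt notes, it rests mainly on the Riesz representation theorem applied to the continuous sesquilinear form associated with the conjugated operator.

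Finally I would establish the $H^1(\Omega)$ bound $\|v\|_{H^1(\Omega)}\le c(\ell,L,\Omega)\,e^{c(\Omega)|\xi|}$. The prefactor $e^{\xi\cdot x}$ has modulus $e^{\operatorname{Re}\xi\cdot x}$, bounded on the bounded set $\Omega$ by $e^{c(\Omega)|\xi|}$; differentiating $v_\xi$ produces the factor $\xi e^{\xi\cdot x}(1+R)$ together with $e^{\xi\cdot x}\nabla R$, so I would need a bound on $\|\nabla R\|_{L^2}$ as well. This is obtained from the equation for $R$ by a standard elliptic estimate: testing against $R$ and using the $L^2$ bound already proved, one controls $\|\nabla R\|_{L^2}$ by $\|q\|_{L^\infty}$ and $|\xi|\,\|R\|_{L^2}$, the latter being $O(\|q\|_{L^\infty})$. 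The factor $|\xi|$ appearing from $\partial(e^{\xi\cdot x})$ is then absorbed into the exponential (since $|\xi|\le e^{|\xi|}$), and the dependence on $\ell,L$ enters through $\|q\|_{L^\infty}\le c(\ell,L)$, since $q$ involves second derivatives of $\sqrt\gamma$ controlled by $\|\gamma\|_{W^{2,\infty}}\le L$ and the lower bound $\gamma\ge\ell$. I expect the main obstacle to be precisely this $H^1$ estimate: the CGO construction in \cite{caro_global_2016} is typically phrased in weighted $L^2$ spaces adapted to the complex frequency $\xi$, and translating the gradient control back into an unweighted $H^1(\Omega)$ bound with the clean exponential dependence requires care in tracking how the weights and the factor $|\xi|$ interact, even though each individual inequality is routine.
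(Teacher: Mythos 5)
Your proof takes essentially the same route as the paper: the key observation in both is that $\gamma\in\Ucal(\Omega)$ being constant near $\bord$ forces $q=0$ in a neighbourhood of the boundary, so $q$ extends by zero to a smooth domain containing $\Omega$ (or to all of $\R^n$) without changing its $L^\infty$ norm, after which the classical CGO machinery applies. The paper simply cites the regular case (\cite[Theorem 1.1]{alessandrini_determining_1991}) at that point, whereas you re-derive the construction from the a priori estimate for $(-\Delta-2\xi\cdot\nabla+q)$ and then track the $H^1$ bound by hand, but the substance is the same.
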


\begin{proof}
Near $\bord$, $\gamma$ is constant hence $q$ is null. Therefore, it can be extended to a smooth domain containing $\Omega$ without modifying its $L^\infty$ norm, and the result follows from the regular case~\cite[Theorem 1.1]{alessandrini_determining_1991}.
\end{proof}

As it was pointed out in~\cite{alessandrini_determining_1991}, when $|\xi|$ is `small' in the sense that $|\xi|\le\|q\|_{L^\infty(\Omega)}$, the solution $v=\sqrt\gamma$ is a CGO solution to~\eqref{Eq:Schrodinger} in the sense of Lemma~\ref{Lem:CGOSolutions}. In that case,~\eqref{Eq:CGO-R-Estimate} can be refined as
\begin{equation}\label{Eq:CGO-R-Estimate-refined}
\|R(\xi,\cdot)\|_{L^2(\Omega)}\le c(|\Omega|,\ell,L) \|q\|_{L^\infty(\Omega)}.
\end{equation}
The CGO solutions allow to state a stability estimate for Schrödinger's equation.

\begin{proposition}\label{Prop:Estim-Schrodinger}
Let $\Omega$ be an admissible domain of $\R^n$, $n\ge3$. Let $\ell,L>0$. Let $\gamma_1,\gamma_2\in\Ucal(\Omega)$ be such that $\ell\le\gamma_{1,2}$ and $\|\gamma_{1,2}\|_{W^{2,\infty}(\Omega)}\le L$. Define $q_1,q_2$ by~\eqref{Eq:q} for $\gamma_1$ and $\gamma_2$ respectively. Then, there exists a function $\tilde\omega:\R\to\R$ such that
\begin{equation}\label{Eq:StabilitySchrodinger}
\|q_1-q_2\|_{H^{-1}(\Omega)}\le \tilde\omega\big(\|\tilde{\Lambda}^{q_1}-\tilde{\Lambda}^{q_2}\|\big),
\end{equation}
and
\begin{equation*}
\forall t\in]0,1[,\quad\tilde\omega(t)\le c(n,\ell,L,\Omega)|\ln t\,|^{-\tilde\delta},
\end{equation*}
for some $\tilde\delta>0$ depending only on $n$.
\end{proposition}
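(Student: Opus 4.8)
The plan is to adapt Alessandrini's Fourier-analytic argument \cite{alessandrini_stable_1988,alessandrini_determining_1991} to the weak framework of admissible domains. The starting point is the bilinear identity: if $v_1,v_2\in H^1(\Omega)$ solve $-\Delta v_i+q_iv_i=0$ weakly, then, arguing exactly as in Proposition~\ref{Prop:P-S} (Green's formula from Definition~\ref{def:ddn} together with the self-adjointness of $\tilde\Lambda^{q}$, which follows by the same computation as Point~(i)),
\[
\langle (\tilde\Lambda^{q_1}-\tilde\Lambda^{q_2})\Tr v_1,\Tr v_2\rangle_{\Hm,\Hp}=\int_\Omega (q_1-q_2)\,v_1v_2\,\dx.
\]
Write $Q:=q_1-q_2$. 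Because $\gamma_1,\gamma_2\in\Ucal(\Omega)$ are constant near $\bord$, both $q_i$ vanish on a neighbourhood of $\bord$, so $Q$ extends by zero to an element of $L^\infty(\R^n)$ compactly supported inside $\Omega$, with $\|Q\|_{L^\infty}\le c(\ell,L)$ since $q_i=\Delta\sqrt{\gamma_i}/\sqrt{\gamma_i}$ with $\gamma_i\ge\ell$ and $\|\gamma_i\|_{W^{2,\infty}}\le L$. The goal is then to control the Fourier transform $\widehat Q$ of this zero-extension.

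Fix $\xi\in\R^n$ and a large parameter $\tau\ge|\xi|$. Using $n\ge3$, I would choose $\zeta_1,\zeta_2\in\C^n$ with $\zeta_i\cdot\zeta_i=0$, $\zeta_1+\zeta_2=-i\xi$ and $|\zeta_i|=\sqrt2\,\tau$ (split $-i\xi$ along two real directions orthogonal to $\xi$ and to each other). Let $v_i=e^{\zeta_i\cdot x}(1+R_i)$ be the CGO solutions of Lemma~\ref{Lem:CGOSolutions} for $q_i$. Since $\zeta_1+\zeta_2$ is purely imaginary, the product of the exponential factors is exactly $e^{-i\xi\cdot x}$, so
\[
\int_\Omega Q\,v_1v_2\,\dx=\widehat Q(\xi)+\int_\Omega Q\,e^{-i\xi\cdot x}\big(R_1+R_2+R_1R_2\big)\,\dx.
\]
The error integral is bounded, via $|e^{-i\xi\cdot x}|=1$ and \eqref{Eq:CGO-R-Estimate}, by $c\,\|Q\|_{L^\infty}(\|R_1\|_{L^2}+\|R_2\|_{L^2}+\|R_1\|_{L^2}\|R_2\|_{L^2})\le c\,\tau^{-1}$, whereas the left-hand side equals the boundary pairing, at most $\varepsilon\,\|v_1\|_{H^1}\|v_2\|_{H^1}\le c\,\varepsilon\,e^{c\tau}$ by Lemma~\ref{Lem:CGOSolutions} and the fact that $\Tr$ is a contraction, with $\varepsilon:=\|\tilde\Lambda^{q_1}-\tilde\Lambda^{q_2}\|$. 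Hence, for every $\tau\ge|\xi|$,
\[
|\widehat Q(\xi)|\le c\big(e^{c\tau}\varepsilon+\tau^{-1}\big).
\]

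Next I would estimate $\|Q\|_{H^{-1}}^2\simeq\int_{\R^n}(1+|\xi|^2)^{-1}|\widehat Q(\xi)|^2\,\mathrm{d}\xi$ by splitting at radius $\rho$. The tail $\{|\xi|\ge\rho\}$ is bounded by $\rho^{-2}\|Q\|_{L^2}^2\le c\,\rho^{-2}$. On $\{|\xi|<\rho\}$ I insert the pointwise bound with some $\tau\ge\rho$ to get a contribution $\le c\,\rho^{\,n}(e^{c\tau}\varepsilon+\tau^{-1})^2$. Taking $\tau=\rho^{(n+2)/2}$ so that $\rho^{\,n}\tau^{-2}=\rho^{-2}$, and then choosing $\rho$ of order $|\ln\varepsilon|^{2/(n+2)}$ so that $e^{c\tau}\varepsilon\lesssim\tau^{-1}$, yields $\|Q\|_{H^{-1}}\le c\,|\ln\varepsilon|^{-2/(n+2)}$ for $\varepsilon$ small. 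This defines the modulus $\tilde\omega$ with exponent $\tilde\delta=2/(n+2)$, depending only on $n$; for $\varepsilon$ near $1$ one sets $\tilde\omega$ equal to a constant. The CGO bookkeeping is the routine part, handed to us by Lemma~\ref{Lem:CGOSolutions} and the constant-near-boundary hypothesis (precisely what legitimizes the zero-extension of $Q$ and the reduction to Fourier analysis on $\R^n$). The main obstacle is the two-parameter optimization: the exponential blow-up $e^{c\tau}$ of the $H^1$-norms of the CGO solutions, the $\tau^{-1}$ decay of their remainders, and the high-frequency tail controlled only by the a priori bound on $Q$ must be balanced simultaneously through $\rho$ and $\tau$, while keeping every constant dependent only on $(n,\ell,L,\Omega)$. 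A secondary point to verify is the self-adjointness of $\tilde\Lambda^q$ and the identity for complex-valued $H^1$ solutions in this weak setting, together with the comparison between $\|Q\|_{H^{-1}(\Omega)}$ and the $H^{-1}(\R^n)$-norm of the zero-extension, valid because $Q$ is supported away from $\bord$.
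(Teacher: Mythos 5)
Your proposal is correct and follows essentially the same route as the paper: Alessandrini's bilinear identity via Green's formula, CGO solutions from Lemma~\ref{Lem:CGOSolutions} with frequencies summing to a prescribed real Fourier mode, a pointwise bound on the Fourier transform of the zero-extension of $q_1-q_2$ (legitimate since the $q_i$ vanish near $\bord$), and a low/high frequency splitting of the $H^{-1}$ norm optimized to give the logarithmic modulus. The only difference is cosmetic: you tie the CGO parameter to the splitting radius by $\tau=\rho^{(n+2)/2}$ and always work with $\tau\ge|\xi|$ large, whereas the paper keeps the two parameters $(\kappa,r)$ independent and invokes the refined remainder estimate~\eqref{Eq:CGO-R-Estimate-refined} for small frequencies; both yield the same conclusion.
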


\begin{proof}
Let $k\in\R^n$ and $r>0$. Let $\xi_1,\xi_2\in \C^n$ be such that, for $i\in\{1,2\}$,
\begin{equation*}
\xi_i\cdot\xi_i=0,\quad \xi_1+\xi_2=ik\quad\mbox{and}\quad |\xi_i|^2=\frac{|k|^2}2+2r^2.
\end{equation*}
Then, by Lemma~\ref{Lem:CGOSolutions}, there exist solutions $v_1$ and $v_2$ to~\eqref{Eq:Schrodinger} for $q_1$ and $q_2$ respectively in the form
\begin{equation*}
v_i(x)=e^{\xi_i\cdot x}(1+R(\xi_i,x)),\quad x\in\Omega,\;i\in\{1,2\}.
\end{equation*}
By Green's formula,
\begin{align*}
\langle(\tilde{\Lambda}^{q_1}-\tilde{\Lambda}^{q_2})v_2, v_1\rangle_{\Hm,\Hp}&=\int_{\Omega}{(q_1-q_2)v_1v_2\,\dx}\\
&=\int_\Omega(q_1-q_2)e^{ik\cdot x}\big[1+R(\xi_1,x)+R(\xi_2,x)\\
&\qquad+R(\xi_1,x)R(\xi_2,x)\big]\,\dx,
\end{align*}
so that, by~\eqref{Eq:CGO-R-Estimate-refined} when $|k|$ and $r$ are `small', and~\eqref{Eq:CGO-R-Estimate} otherwise,
\begin{equation*}
\left|((q_1-q_2)\mathds{1}_\Omega)^\wedge(k)-\langle(\tilde{\Lambda}^{q_1}-\tilde{\Lambda}^{q_2})v_2, v_1\rangle\right|\le \frac{c(\ell,L,\Omega)}{|k|+r}.
\end{equation*}
Hence,
\begin{equation*}
|((q_1-q_2)\mathds{1}_\Omega)^\wedge(k)|\le c(\ell,L,\Omega)\left(e^{c(\Omega)(|k|+r)}\|\tilde{\Lambda}^{q_1}-\tilde{\Lambda}^{q_2}\|+\frac 1{|k|+r}\right).
\end{equation*}
Therefore, for $\kappa>0$,
\begin{align*}
\|q_1-q_2\|_{H^{-1}(\Omega)}^2 &\le \|(q_1-q_2)\mathds{1}_\Omega\|_{H^{-1}(\R^n)}^2\\
&\le \int_{\R^n}\frac{|((q_1-q_2)\mathds{1}_\Omega)^\wedge(k)|^2}{1+|k|^2}\,\mathrm{d}k\\
&\le \int_{|k|<\kappa}{|((q_1-q_2)\mathds{1}_\Omega)^\wedge(k)|^2\,\mathrm{d}k}+\int_{|k|>\kappa}\frac{|((q_1-q_2)\mathds{1}_\Omega)^\wedge(k)|^2}{1+\kappa^2}\,\mathrm{d}k\\
&\le c(\ell, L,\Omega)\left(e^{c(\Omega)(\kappa+r)}\|\tilde\Lambda^{q_1}-\tilde\Lambda^{q_2}\|+\frac{\kappa^n}r+\frac 1{1+\kappa^2}\right).
\end{align*}
Minimizing in $(\kappa, r)$ yields the estimate, as in~\cite{alessandrini_stable_1988, alessandrini_determining_1991}.
\end{proof}

To exhibit the connection between the operators $\Lambda^\gamma$ and $\tilde{\Lambda}^q$ when $\gamma$ and $q$ are linked by~\eqref{Eq:q}, we prove the following lemma on the weak normal derivation of a product.

\begin{lemma}\label{Lem:NormalProduct}
Let $\Omega$ be an admissible domain of $\R^n$. Let $\phi,\psi\in H^1_\Delta(\Omega)$ be such that $\phi\psi\in H^1_\Delta(\Omega)$. Then it holds
\begin{equation*}
\forall \chi\in H^1(\Omega),\quad \left\langle\ddn{(\phi\psi)},\Tr \chi\right\rangle=\left\langle\ddn{\phi},\Tr (\psi\chi)\right\rangle+\left\langle\ddn{\psi},\Tr (\phi\chi)\right\rangle.
\end{equation*}
\end{lemma}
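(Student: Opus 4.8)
The plan is to unravel all three pairings through the definition of the weak normal derivative (Definition~\ref{def:ddn}) and to reduce the claimed product rule to a pointwise Leibniz identity, the only delicate point being that at $H^1_\Delta$ regularity the intermediate products fall outside $L^2$ (indeed outside $L^1$ once three Sobolev factors meet), so one must integrate only fully assembled identities. First I would note that the statement is only meaningful once $\psi\chi$ and $\phi\chi$ belong to $H^1(\Omega)$, which I take as implicit in the hypotheses (it is forced by the appearance of $\Tr(\psi\chi)$ and $\Tr(\phi\chi)$); in particular $\psi\chi,\phi\chi\in L^2(\Omega)$. Applying~\eqref{EqGreenInt} to $\phi\psi$ tested against $\chi$, to $\phi$ tested against $\psi\chi$, and to $\psi$ tested against $\phi\chi$, the claim becomes the identity
\begin{multline*}
\int_\Omega\Delta(\phi\psi)\,\chi\,\dx+\int_\Omega\nabla(\phi\psi)\cdot\nabla\chi\,\dx
=\int_\Omega(\Delta\phi)\,\psi\chi\,\dx+\int_\Omega\nabla\phi\cdot\nabla(\psi\chi)\,\dx\\
+\int_\Omega(\Delta\psi)\,\phi\chi\,\dx+\int_\Omega\nabla\psi\cdot\nabla(\phi\chi)\,\dx,
\end{multline*}
in which every integral is separately finite by Cauchy--Schwarz.

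Next I would establish the distributional product rule
\begin{equation*}
\Delta(\phi\psi)=\psi\Delta\phi+\phi\Delta\psi+2\,\nabla\phi\cdot\nabla\psi\qquad\text{a.e. on }\Omega.
\end{equation*}
This is obtained by testing $\Delta(\phi\psi)$ against $\omega\in C_0^\infty(\Omega)$, integrating by parts with the admissible test functions $\psi\omega,\phi\omega\in H^1_0(\Omega)$ and using $\Delta\phi,\Delta\psi\in L^2(\Omega)$; since $\Delta(\phi\psi)\in L^2(\Omega)$ and the right-hand side lies in $L^1_{\mathrm{loc}}(\Omega)$, the identity holds a.e. Multiplying it by $\chi$ and integrating — each of $\Delta(\phi\psi)\chi$, $(\psi\chi)\Delta\phi$ and $(\phi\chi)\Delta\psi$ being in $L^1(\Omega)$, whence so is $\chi\,\nabla\phi\cdot\nabla\psi$ — the Laplacian terms $\int_\Omega(\Delta\phi)\psi\chi$ and $\int_\Omega(\Delta\psi)\phi\chi$ cancel, and the claim reduces to the gradient identity
\begin{equation*}
2\int_\Omega\chi\,\nabla\phi\cdot\nabla\psi\,\dx+\int_\Omega\nabla(\phi\psi)\cdot\nabla\chi\,\dx
=\int_\Omega\nabla\phi\cdot\nabla(\psi\chi)\,\dx+\int_\Omega\nabla\psi\cdot\nabla(\phi\chi)\,\dx.
\end{equation*}

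Finally I would verify this last identity at the level of integrands. Expanding $\nabla(\psi\chi)=\chi\nabla\psi+\psi\nabla\chi$, $\nabla(\phi\chi)=\chi\nabla\phi+\phi\nabla\chi$ and $\nabla(\phi\psi)=\psi\nabla\phi+\phi\nabla\psi$ (all valid a.e. as $H^1$-Leibniz rules), the right-hand integrand equals $2\chi\,\nabla\phi\cdot\nabla\psi+(\psi\nabla\phi+\phi\nabla\psi)\cdot\nabla\chi$, that is, the left-hand integrand, a.e. on $\Omega$. Crucially, these are pointwise manipulations of measurable functions, so no integrability of the individual pieces (such as $\psi\,\nabla\phi\cdot\nabla\chi$, which need not lie in $L^1$) is required; one only integrates the two assembled sides, each of which is in $L^1(\Omega)$. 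This gives the gradient identity and hence the lemma. The main obstacle is precisely this regularity bookkeeping: because products of Sobolev functions and their gradients leave $L^2$ and even $L^1$, the pairings cannot be split term by term, and all algebra must be performed pointwise and integrated only once fully recombined; a secondary point is the careful justification of the a.e. product rule for $\Delta(\phi\psi)$ at $H^1_\Delta$ regularity.
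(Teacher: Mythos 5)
Your proof is correct and follows essentially the same route as the paper: unfold all three pairings via the defining Green's formula~\eqref{EqGreenInt} and verify the resulting integral identity through the Leibniz rules for $\Delta(\phi\psi)$ and the gradients. The paper states this in three lines without comment; your extra care about which products are genuinely integrable (and the implicit hypothesis $\phi\chi,\psi\chi\in H^1(\Omega)$ needed for the traces to make sense) is a sound elaboration of the same argument, not a different one.
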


\begin{proof}
By Green's formula, it holds:
\begin{align*}
\left\langle\ddn{(\phi\psi)},\Tr \chi\right\rangle &= \int_{\Omega}{\Delta (\phi\psi) \chi\,\dx}+\int_{\Omega}{\nabla (\phi\psi)\cdot\nabla \chi\,\dx}\\
&= \int_{\Omega}{(\Delta \phi) \psi \chi\,\dx}+\int_{\Omega}{\nabla \phi\cdot\nabla (\psi\chi)\,\dx}\\
&\qquad +\int_{\Omega}{(\Delta \psi) \phi \chi\,\dx}+\int_{\Omega}{\nabla \psi\cdot\nabla (\phi\chi)\,\dx},
\end{align*}
and the identity follows.
\end{proof}

This identity allows to generalize the link between the Poincaré-Steklov operators for the conductivity problem and for Schrödinger's equation~\cite[p.~168]{alessandrini_stable_1988} to the case of admissible domains, and derive an estimate of the latter using the former.

\begin{lemma}
Let $\Omega$ be an admissible domain of $\R^n$. Let $\ell,L>0$. Let $\gamma\in\Ucal(\Omega)$ be such that $\ell\le\gamma$ and $\|\gamma\|_{W^{2,\infty}(\Omega)}\le L$. Let $q$ be defined by~\eqref{Eq:q}. Then, it holds
\begin{equation*}
\tilde{\Lambda}^q=\frac1{\sqrt\gamma}\Lambda^\gamma\left(\frac1{\sqrt{\gamma}}\,\cdot\right).
\end{equation*}
\end{lemma}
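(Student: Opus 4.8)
The plan is to combine the conductivity--Schrödinger correspondence recalled after~\eqref{Eq:Schrodinger} with the product rule for weak normal derivatives (Lemma~\ref{Lem:NormalProduct}) and the vanishing of the normal derivative of functions constant near the boundary (Lemma~\ref{Lem:NormalConstant}). Since $\gamma\in\Ucal(\Omega)$ is bounded below by $\ell>0$, lies in $W^{2,\infty}(\Omega)$, and is constant near $\bord$, both $\sqrt\gamma$ and $1/\sqrt\gamma$ belong to $\mathrm{Lip}(\Omega)$; hence the trace and normal-derivative modulation results (Lemmas~\ref{Lem:InequalityB} and~\ref{Lem:InequalityB'}) apply to them, and $\frac1{\sqrt\gamma}\Lambda^\gamma(\frac1{\sqrt\gamma}\,\cdot)$ is a well-defined element of $\mathcal{L}(\Hp,\Hm)$.

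First I would fix $f\in\Hp$ and let $u\in H^1(\Omega)$ be the unique weak solution of the conductivity problem~\eqref{Eq:ConductivityDir} with $\Tr u=\frac1{\sqrt\gamma}f$, which exists by Lemma~\ref{Lem:DirichletConductivity-WP}. By the equivalence between~\eqref{Eq:ConductivityDir} and~\eqref{Eq:Schrodinger}, $v:=\sqrt\gamma\,u$ solves~\eqref{Eq:Schrodinger}, and by the modulation formula~\eqref{Eq:TraceModuling} together with the pointwise composition $\sqrt\gamma\cdot\frac1{\sqrt\gamma}=1$ q.e.\ on $\bord$, one gets $\Tr v=\sqrt\gamma\,\Tr u=f$. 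Hence $v$ is precisely the solution defining $\tilde\Lambda^q f=\ddn v$.

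The crux is the identity $\ddn{(\sqrt\gamma\,u)}=\sqrt\gamma\,\ddn u$ in $\Hm$. I would obtain it from Lemma~\ref{Lem:NormalProduct} with $\phi=\sqrt\gamma$ and $\psi=u$: its hypotheses hold since $\sqrt\gamma\in W^{2,\infty}(\Omega)\subset H^1_\Delta(\Omega)$, since $u\in H^1_\Delta(\Omega)$ (from $\Delta u=-\gamma^{-1}\nabla\gamma\cdot\nabla u\in L^2(\Omega)$, valid as $\gamma\in W^{1,\infty}(\Omega)$), and since $v=\sqrt\gamma\,u\in H^1_\Delta(\Omega)$ (from $\Delta v=qv\in L^2(\Omega)$). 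The product rule gives, for every $\chi\in H^1(\Omega)$,
\begin{equation*}
\left\langle\ddn{(\sqrt\gamma\,u)},\Tr\chi\right\rangle=\left\langle\ddn{\sqrt\gamma},\Tr(u\chi)\right\rangle+\left\langle\ddn u,\Tr(\sqrt\gamma\,\chi)\right\rangle.
\end{equation*}
Because $\gamma$, hence $\sqrt\gamma$, is constant near $\bord$, Lemma~\ref{Lem:NormalConstant} shows $\ddn{\sqrt\gamma}=0$, so the first term drops out; this is exactly where the hypothesis $\gamma\in\Ucal(\Omega)$ is used. Using $\Tr(\sqrt\gamma\,\chi)=\sqrt\gamma\,\Tr\chi$ from~\eqref{Eq:TraceModuling} and the duality action of modulation on $\Hm$ (namely $\langle\sqrt\gamma\,g,h\rangle=\langle g,\sqrt\gamma\,h\rangle$, as in the proof of Lemma~\ref{Lem:InequalityB'}), the surjectivity of $\Tr$ onto $\Hp$ then yields $\ddn v=\sqrt\gamma\,\ddn u$.

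Finally I would rewrite $\sqrt\gamma\,\ddn u=\frac1{\sqrt\gamma}(\gamma\,\ddn u)$, using that modulations compose (a consequence of the pointwise product formula~\eqref{Eq:TraceModuling}), and recognize $\gamma\,\ddn u=\Lambda^\gamma(\Tr u)=\Lambda^\gamma\big(\frac1{\sqrt\gamma}f\big)$ by Definition~\ref{Def:P-S}. Chaining the steps gives $\tilde\Lambda^q f=\ddn v=\frac1{\sqrt\gamma}\Lambda^\gamma\big(\frac1{\sqrt\gamma}f\big)$ for every $f\in\Hp$, which is the claimed operator identity. I expect the only genuine difficulty to be the careful bookkeeping of the modulation maps on the dual space $\Hm$ and their compatibility with the pairing, together with verifying the regularity needed to invoke Lemma~\ref{Lem:NormalProduct}; the geometric content is carried entirely by the vanishing $\ddn{\sqrt\gamma}=0$, which is immediate from Lemma~\ref{Lem:NormalConstant}.
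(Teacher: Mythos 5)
Your proposal is correct and follows essentially the same route as the paper: the trace modulation formula~\eqref{Eq:TraceModuling}, the product rule of Lemma~\ref{Lem:NormalProduct} with $\phi=\sqrt\gamma$ and $\psi=u$, and the vanishing $\ddn{\sqrt\gamma}=0$ from Lemma~\ref{Lem:NormalConstant}. You merely add detail the paper leaves implicit (verifying that $\sqrt\gamma$, $u$ and $\sqrt\gamma\,u$ lie in $H^1_\Delta(\Omega)$, and the compatibility of modulation with the duality pairing), all of which checks out.
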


\begin{proof}
Let $u\in H^1(\Omega)$ be such that $\nabla\cdot(\gamma\nabla u)=0$ on $\Omega$ and $v=\sqrt\gamma u$ be the solution to the equivalent Schrödinger problem. By~\eqref{Eq:TraceModuling}, it holds $\Tr  v=\sqrt{\gamma}\Tr  u$. Applying Lemma~\ref{Lem:NormalProduct}, followed by~\eqref{Eq:TraceModuling} again and Lemma~\ref{Lem:NormalConstant} yields:
\begin{equation*}
\ddn{v}=\ddn{(\sqrt{\gamma}u)}=\sqrt\gamma\ddn{u}=\frac1{\sqrt\gamma}\gamma\ddn u,
\end{equation*}
hence the formula.
\end{proof}

\begin{proposition}\label{Prop:LinkP-SOperators}
Let $\Omega$ be an admissible domain of $\R^n$, $n\ge3$. Let $\ell,L>0$. For $\gamma_1,\gamma_2\in \Ucal(\Omega)$ such that $\ell\le \gamma_{1,2}$ and $\|\gamma_{1,2}\|_{W^{2,\infty}(\Omega)}\le L$, it holds
\begin{equation*}
\|\tilde{\Lambda}^{q_1}-\tilde{\Lambda}^{q_2}\|\le c(\ell,L) \big(\|\Lambda^{\gamma_1}-\Lambda^{\gamma_2}\|+\|\gamma_1-\gamma_2\|_{L^{\infty}(\bord)}\big).
\end{equation*}
\end{proposition}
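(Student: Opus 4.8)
The plan is to start from the factorization $\tilde\Lambda^{q_i}=\frac1{\sqrt{\gamma_i}}\Lambda^{\gamma_i}\big(\frac1{\sqrt{\gamma_i}}\,\cdot\big)$ established in the preceding lemma, and to exploit the fact that each $\gamma_i\in\Ucal(\Omega)$ is constant near $\bord$. Write $\alpha_i>0$ for the (constant) value of $\gamma_i$ on the neighbourhood of $\bord$ where it is constant; then $\ell\le\alpha_i\le L$ and, since the trace of a function constant near the boundary equals that constant (by~\eqref{Eq:Tr-Representative}), one has $\|\gamma_1-\gamma_2\|_{L^\infty(\bord)}=|\alpha_1-\alpha_2|$. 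The key observation is that multiplication by a function constant near $\bord$ acts on $\Hp$, and by duality on $\Hm$, exactly as multiplication by its boundary value: indeed $1/\sqrt{\gamma_i}\in W^{1,\infty}(\Omega)$, so $\tfrac1{\sqrt{\gamma_i}}u\in H^1(\Omega)$ for $u\in V_1(\Omega)$, and in~\eqref{Eq:TraceModuling} the remainder $O(|y-x|)$ vanishes identically near $\bord$ because $1/\sqrt{\gamma_i}$ is locally constant there. Hence $\Tr(\tfrac1{\sqrt{\gamma_i}}u)=\alpha_i^{-1/2}\Tr u$, and dually $\tfrac1{\sqrt{\gamma_i}}g=\alpha_i^{-1/2}g$ for $g\in\Hm$. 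Substituting both occurrences of $1/\sqrt{\gamma_i}$ by the scalar $\alpha_i^{-1/2}$ collapses the factorization to
\begin{equation*}
\tilde\Lambda^{q_i}=\frac1{\alpha_i}\,\Lambda^{\gamma_i}.
\end{equation*}

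Next I would telescope the difference,
\begin{equation*}
\tilde\Lambda^{q_1}-\tilde\Lambda^{q_2}=\frac1{\alpha_1}\big(\Lambda^{\gamma_1}-\Lambda^{\gamma_2}\big)+\Big(\frac1{\alpha_1}-\frac1{\alpha_2}\Big)\Lambda^{\gamma_2},
\end{equation*}
and estimate in operator norm. Using $\alpha_i\ge\ell$ gives $\tfrac1{\alpha_1}\le\tfrac1\ell$ and $\big|\tfrac1{\alpha_1}-\tfrac1{\alpha_2}\big|=\tfrac{|\alpha_1-\alpha_2|}{\alpha_1\alpha_2}\le\ell^{-2}\|\gamma_1-\gamma_2\|_{L^\infty(\bord)}$. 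It then remains to bound $\|\Lambda^{\gamma_2}\|$: by the pairing $\langle\Lambda^{\gamma_2}f,g\rangle=\int_\Omega\gamma_2\nabla u_f\cdot\nabla u_g\,\dx$ from the proof of Proposition~\ref{Prop:P-S}(i), together with the a priori estimate~\eqref{Eq:DirichletEstimate}, one gets $\|\Lambda^{\gamma_2}\|\le c(\widO)\|\gamma_2\|_{L^\infty}(1+\|\gamma_2\|_{L^\infty}/\inf\gamma_2)^2\le c(\ell,L)$. Collecting the two terms yields the claimed inequality with a constant $c(\ell,L)$.

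The only genuinely delicate point is the reduction of the multipliers $1/\sqrt{\gamma_i}$ to their boundary scalars $\alpha_i^{-1/2}$; everything else is a two-term telescoping with elementary bounds. This reduction is exactly where the hypothesis $\gamma_i\in\Ucal(\Omega)$ (constancy near $\bord$, not merely $W^{2,\infty}$) enters, since it is what forces the remainder in~\eqref{Eq:TraceModuling} to vanish and thus lets both the $\Hp$-multiplier and the $\Hm$-multiplier degenerate to scalars. If one preferred to avoid this collapse, the same estimate follows by telescoping while keeping $1/\sqrt{\gamma_i}$ as genuine multipliers and applying Lemmas~\ref{Lem:InequalityB} and~\ref{Lem:InequalityB'}; constancy near $\bord$ then replaces the a priori dangerous factor $\|1/\sqrt{\gamma_1}-1/\sqrt{\gamma_2}\|_{W^{1,\infty}(\Omega)}$ by the harmless boundary quantity $|\alpha_1^{-1/2}-\alpha_2^{-1/2}|\le c(\ell)\|\gamma_1-\gamma_2\|_{L^\infty(\bord)}$, which is the crucial cancellation making the $L^\infty(\bord)$ (rather than $W^{1,\infty}(\Omega)$) term appear on the right-hand side.
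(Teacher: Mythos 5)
Your proof is correct and follows essentially the same route as the paper's (very terse) argument: both rest on the factorization $\tilde\Lambda^{q_i}=\gamma_i^{-1/2}\Lambda^{\gamma_i}\big(\gamma_i^{-1/2}\,\cdot\big)$, the observation that multipliers constant near $\bord$ act on $\Hp$ and $\Hm$ as their boundary scalars (the paper phrases this as the improvement $\|\gamma_i f\|_{\Hp}=\|\gamma_i\|_{L^\infty(\bord)}\|f\|_{\Hp}$ of Lemmas~\ref{Lem:InequalityB} and~\ref{Lem:InequalityB'}), the local Lipschitz property of $x\mapsto x^{-1/2}$ on $[\ell,L]$, and a telescoping controlled by the a priori bound on $\|\Lambda^{\gamma_2}\|$. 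The only cosmetic point is that your bound $\|\Lambda^{\gamma_2}\|\le c(\widO)\,L(1+L/\ell)^2$ carries a dependence on $\widO$, so the constant is really $c(\ell,L,\Omega)$; this is inherited from the paper's own (slightly loose) statement and is harmless for Theorem~\ref{Th:Inverse-Stability-Domain}.
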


\begin{proof}
We use a similar decomposition to~\cite[p. 168-169]{alessandrini_stable_1988}.
The estimate relies on the boundedness and ellipticity of the conductivities and the local Lipschitz properties of $x\mapsto x^{-\frac12}$. 
The fact that the conductivities are constant near the boundary allows to improve Lemmas~\ref{Lem:InequalityB} and~\ref{Lem:InequalityB'}: for $f\in\Hp$, $\|\gamma_if\|_{\Hp}=\|\gamma_i\|_{L^\infty(\bord)}\|f\|_{\Hp}$ for $i\in\{1,2\}$.
\end{proof}

Proposition~\ref{Prop:LinkP-SOperators} allows to restate the stability estimate from~\eqref{Eq:StabilitySchrodinger} with a right-hand side in terms of the Poincaré-Steklov operator for the conductivity problem.
All that is left is to express the left-hand side in terms of the conductivities to yield the estimate.

\begin{proposition}\label{Prop:Estim-Cond-Alpha}
Let $\Omega$ be an admissible domain of $\R^n$, $n\ge3$. Let $\ell,L>0$. Let $\gamma_1,\gamma_2\in \Ucal(\Omega)$ be such that $\ell\le \gamma_{1,2}$ and $\|\gamma_{1,2}\|_{W^{2,\infty}(\Omega)}\le L$. Define $q_1,q_2$ by~\eqref{Eq:q} for $\gamma_1$ and $\gamma_2$ respectively. Then, it holds
\begin{equation*}
\|\gamma_1-\gamma_2\|_{L^\infty(\Omega)}\le c(\|q_1-q_2\|_{H^{-1}(\Omega)}+\|\gamma_1-\gamma_2\|_{L^\infty(\bord)})^\alpha,
\end{equation*}
where $\alpha\in]0,1[$ depends only on $n$, and $c>0$ on $n$, $\ell$, $L$ and $\diamO$.
\end{proposition}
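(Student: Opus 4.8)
The plan is to invert the relation \eqref{Eq:q} through the substitution $a_i:=\sqrt{\gamma_i}$, which satisfies $\Delta a_i=q_i a_i$, to control the difference $w:=a_1-a_2$ by an elliptic estimate, and then to convert back to $\gamma_1-\gamma_2$ and interpolate up to $L^\infty$. Subtracting the two equations gives
\begin{equation*}
(-\Delta+q_2)w=-(q_1-q_2)\,a_1\quad\text{on }\Omega.
\end{equation*}
Since $\gamma_1,\gamma_2\in\Ucal(\Omega)$ are bounded below by $\ell$ and above by $L$, the functions $a_1,a_2$ lie in $W^{1,\infty}(\Omega)$ with norms controlled by $\ell$ and $L$; hence multiplication by $a_1$ is bounded on $H^{-1}(\Omega)$ (by duality with multiplication on $H^1_0$) and the source obeys $\|(q_1-q_2)a_1\|_{H^{-1}(\Omega)}\le c(\ell,L)\|q_1-q_2\|_{H^{-1}(\Omega)}$. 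Because the conductivities are constant near $\bord$, so are $a_1,a_2$, hence $w$ is constant near $\bord$; as the kernel of the trace is $H^1_0(\Omega)$ (Theorem~\ref{Th:Trace}, Point~\ref{Pt:Trace-kernel}), $\Tr w$ equals that boundary constant, so that $\|\Tr w\|_{\Hp}\le c(\Omega,\ell)\|\gamma_1-\gamma_2\|_{L^\infty(\bord)}$.

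To turn the equation into a coercive one I would use the ground-state transform $w=a_2\phi$, under which $(-\Delta+q_2)w=F$ becomes $-\nabla\cdot(\gamma_2\nabla\phi)=a_2F$, i.e. a conductivity equation with the elliptic, bounded coefficient $\gamma_2$. This sidesteps the lack of sign of $q_2$: the associated bilinear form is coercive on $H^1_0(\Omega)$ by ellipticity ($\gamma_2\ge\ell$) and Poincaré's inequality \eqref{Eq:Poincaré}, so the Dirichlet problem is well-posed (as for Lemma~\ref{Lem:DirichletConductivity-WP}) and yields $\|\phi\|_{H^1(\Omega)}\le c(\ell,L,\Omega)(\|q_1-q_2\|_{H^{-1}(\Omega)}+\|\gamma_1-\gamma_2\|_{L^\infty(\bord)})$. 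Undoing the transform and using $\gamma_1-\gamma_2=w(a_1+a_2)$ with $a_1+a_2\in L^\infty$ then gives
\begin{equation*}
\|\gamma_1-\gamma_2\|_{L^2(\Omega)}\le c(\ell,L,\Omega)\big(\|q_1-q_2\|_{H^{-1}(\Omega)}+\|\gamma_1-\gamma_2\|_{L^\infty(\bord)}\big).
\end{equation*}

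Finally I would upgrade this $L^2$ bound to an $L^\infty$ bound by interpolation against the a priori regularity. The function $g:=\gamma_1-\gamma_2$ is $2L$-Lipschitz, so if $M:=\|g\|_{L^\infty(\Omega)}$, then $|g|\ge M/2$ on $B_\rho(x_0)\cap\Omega$ with $\rho\sim M/L$ around a near-maximizing point $x_0$; the $n$-set property \eqref{Eq:n-set} of the admissible domain bounds $|B_\rho(x_0)\cap\Omega|$ below by $c\,\rho^n$, giving $\|g\|_{L^2(\Omega)}^2\gtrsim M^{n+2}$ and hence $\|g\|_{L^\infty(\Omega)}\le c\,\|g\|_{L^2(\Omega)}^{2/(n+2)}$, the constant depending on $n$, $L$ and the $n$-set constant. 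Combining with the previous display yields the claim with $\alpha=2/(n+2)\in\,]0,1[$, depending only on $n$.

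I expect the interpolation step to be the most delicate point: on a merely admissible (non-smooth) domain the usual route --- Sobolev embedding $H^s\hookrightarrow L^\infty$ for $s>n/2$ after interpolating $H^1$ and $H^2$ --- is unavailable for $n\ge4$ and sensitive to boundary regularity, so I would rely instead on the elementary Lipschitz-plus-$n$-set argument above, which is robust and produces an explicit Hölder exponent depending only on $n$. A secondary point requiring care is the a priori elliptic estimate, since $q_2$ has no sign; the clean way to recover coercivity is the ground-state transform recasting everything in terms of the genuinely elliptic operator $-\nabla\cdot(\gamma_2\nabla\cdot)$, together with the observation that constancy near $\bord$ reduces the boundary contribution to a single constant measured by $\|\gamma_1-\gamma_2\|_{L^\infty(\bord)}$.
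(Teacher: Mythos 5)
Your energy estimate is correct and, despite the different change of variables, is essentially the paper's own argument in disguise: the paper works with $\ln(\gamma_1/\gamma_2)$ and the identity $\nabla\cdot\big(\sqrt{\gamma_1\gamma_2}\,\nabla\ln(\gamma_1/\gamma_2)\big)=2\sqrt{\gamma_1\gamma_2}\,(q_1-q_2)$, while your ground-state transform turns $(-\Delta+q_2)(a_1-a_2)=-(q_1-q_2)a_1$ into $-\nabla\cdot(\gamma_2\nabla\phi)=-\sqrt{\gamma_1\gamma_2}\,(q_1-q_2)$ with $\phi=a_1/a_2-1$; in both cases one invokes the well-posedness of a Dirichlet conductivity problem (Appendix~\ref{A-Subsec:Conductivity}) and the fact that constancy near $\bord$ reduces the boundary datum to a single constant controlled by $\|\gamma_1-\gamma_2\|_{L^\infty(\bord)}$. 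Up to that point your argument is sound: the multiplication bound on $H^{-1}(\Omega)$ by the $W^{1,\infty}$ function $a_1$ and the identification of $\Tr w$ with the boundary constant via $\operatorname{Ker}(\Tr)=H^1_0(\Omega)$ are both legitimate, and from $\|\phi\|_{H^1(\Omega)}$ you in fact control $\|\gamma_1-\gamma_2\|_{H^1(\Omega)}$, not just the $L^2$ norm.

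The gap is in the interpolation step. You assert that $g=\gamma_1-\gamma_2$ is $2L$-Lipschitz because $\|\nabla g\|_{L^\infty(\Omega)}\le 2L$, and you need this with respect to the \emph{Euclidean} distance to conclude $|g|\ge M/2$ on $B_\rho(x_0)\cap\Omega$ with $\rho\sim M/L$. On a general admissible domain, however, a $W^{1,\infty}$ bound only controls increments along paths inside $\Omega$, and $\mathrm{Lip}(\Omega)$ may be strictly contained in $W^{1,\infty}(\Omega)$ --- precisely the point the paper flags in Subsection~\ref{Subsec:Direct-Problem} and again before Lemma~\ref{Lem:SingularSol} (equality requires $\Omega$ to be uniformly locally quasi-convex). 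The hypotheses here are $\gamma_{1,2}\in\Ucal(\Omega)\subset W^{2,\infty}(\Omega)$ with no Lipschitz assumption, so a near-maximizing point $x_0$ may have most of the measure of $B_\rho(x_0)\cap\Omega$ guaranteed by the $n$-set property~\eqref{Eq:n-set} reachable from $x_0$ only through long internal paths, along which $g$ can fall below $M/2$; the lower bound $\|g\|_{L^2(\Omega)}^2\gtrsim M^{n+2}$ then fails. The paper instead interpolates $L^\infty$ between $W^{2,\infty}$ and $H^1$ via a Ladyzhenskaya-type multiplicative inequality, which needs no pointwise Lipschitz control; since you already have the $H^1$ bound, the cleanest repair is to substitute that inequality for your measure-density argument (alternatively, add the hypothesis $\gamma_{1,2}\in\mathrm{Lip}(\Omega)$ or uniform local quasi-convexity of $\Omega$). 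A minor secondary point: your constant also depends on the $n$-set constant $c_\Omega$ from~\eqref{Eq:n-set}, which is not among the dependencies listed in the statement.
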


\begin{proof}
It is known (see for instance~\cite[p. 167]{alessandrini_stable_1988}) that
\begin{equation*}
\nabla\cdot\left(\sqrt{\gamma_1\gamma_2}\,\nabla\ln\frac{\gamma_1}{\gamma_2}\right)=2\sqrt{\gamma_1\gamma_2}(q_1-q_2) \quad\mbox{on }\Omega.
\end{equation*}
Therefore, the well-posedness of the conductivity problem (see Appendix~\ref{A-Subsec:Conductivity}) yields
\begin{equation}\label{Eq:EstimWP-log}
\left\|\ln\frac{\gamma_1}{\gamma_2}\right\|_{H^1(\Omega)}\le c(\ell,L,\widO)\left(\|q_1-q_2\|_{H^{-1}(\Omega)}+\left\|\ln\frac{\gamma_1}{\gamma_2}\right\|_{\Hp}\right).
\end{equation}
Since $\gamma_1$ and $\gamma_2$ are constant near $\bord$, it holds $\ln\frac{\gamma_1}{\gamma_2}=l\in\R$ on $\bord$, hence:
\begin{equation*}
\left\|\ln\frac{\gamma_1}{\gamma_2}\right\|_{\Hp}\le\|l\|_{H^1(\Omega)}\le \sqrt{|\Omega|}\,|l|=\sqrt{|\Omega|}\left\|\ln\frac{\gamma_1}{\gamma_2}\right\|_{L^\infty(\bord)}.
\end{equation*}
By the local Lipschitz properties of $x\mapsto\ln x$,~\eqref{Eq:EstimWP-log} yields
\begin{equation*}
\left\|\ln\frac{\gamma_1}{\gamma_2}\right\|_{H^1(\Omega)}\le c(\ell,L,|\Omega|,\widO)\left(\|q_1-q_2\|_{H^{-1}(\Omega)}+\left\|\gamma_1-\gamma_2\right\|_{L^\infty(\bord)}\right).
\end{equation*}
Finally, by the interpolation formula from~\cite[Theorem 7.3]{ladyzhenskaya_boundary_1985} with some $m>n$, $p=+\infty$ and $r=2$, it holds
\begin{equation*}
\left\|\ln\frac{\gamma_1}{\gamma_2}\right\|_{L^\infty(\Omega)}\le c(n,|\Omega|,m)\left\|\ln\frac{\gamma_1}{\gamma_2}\right\|_{W^{2,\infty}(\Omega)}^{1-\alpha}\left\|\ln\frac{\gamma_1}{\gamma_2}\right\|_{H^1(\Omega)}^\alpha,
\end{equation*}
for some $\alpha\in]0,1[$ depending only on $n$ (and $m$), which yields the estimate, by the local Lipschitz properties of $x\mapsto e^x$.
\end{proof}

Altogether, we deduce the stability estimate on the domain for Calder\'on's inverse problem on an admissible domain.

\begin{theorem}[Stability on the domain]\label{Th:Inverse-Stability-Domain}
Let $\Omega$ be an admissible domain of $\R^n$, $n\ge3$. Let $\ell,L>0$. Let $\gamma_1,\gamma_2\in\mathcal{U}(\Omega)$ be such that $\ell\le\gamma_{1,2}$ and $\|\gamma_{1,2}\|_{W^{2,\infty}(\Omega)}\le L$. Then, there exists a function $\omega:\R\to\R$ such that
\begin{equation*}
\|\gamma_1-\gamma_2\|_{L^\infty(\Omega)}\le \omega(\|\Lambda^{\gamma_1}-\Lambda^{\gamma_2}\|)
\end{equation*}
and
\begin{equation*}
\forall t\in]0,1[,\quad \omega(t)\le c(n,\ell,L,\Omega)|\ln t\,|^{-\delta},
\end{equation*}
for some $\delta>0$ depending only on $n$.
\end{theorem}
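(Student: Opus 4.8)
The plan is to assemble the four intermediate estimates proved above into a single chain, reducing everything to the size of $E:=\|\Lambda^{\gamma_1}-\Lambda^{\gamma_2}\|$. Write $B:=\|\gamma_1-\gamma_2\|_{L^\infty(\bord)}$ and define $q_1,q_2$ by~\eqref{Eq:q}; since $\gamma_1,\gamma_2\in\Ucal(\Omega)$ are $W^{2,\infty}$, bounded below by $\ell$ and with $W^{2,\infty}$-norms at most $L$, both potentials lie in $L^\infty(\Omega)$ and all the hypotheses of Propositions~\ref{Prop:Estim-Schrodinger}, \ref{Prop:LinkP-SOperators} and~\ref{Prop:Estim-Cond-Alpha} are met. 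Proposition~\ref{Prop:Estim-Cond-Alpha} already reduces the target $\|\gamma_1-\gamma_2\|_{L^\infty(\Omega)}$ to controlling the two quantities $\|q_1-q_2\|_{H^{-1}(\Omega)}$ and $B$ in terms of $E$, so the whole proof is a matter of propagating the data bound through the chain.

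First I would feed the two Schrödinger-side estimates into one another. Proposition~\ref{Prop:LinkP-SOperators} bounds $\|\tilde{\Lambda}^{q_1}-\tilde{\Lambda}^{q_2}\|$ by $c(\ell,L)(E+B)$, and Proposition~\ref{Prop:Estim-Schrodinger} then bounds $\|q_1-q_2\|_{H^{-1}(\Omega)}$ by $\tilde\omega\big(c(\ell,L)(E+B)\big)$, where $\tilde\omega(t)\le c|\ln t\,|^{-\tilde\delta}$ near $0$. Next I would dispose of the boundary term: since $\gamma_1,\gamma_2\in W^{2,\infty}(\Omega)\subset\mathrm{Lip}(\Omega)$ satisfy the required ellipticity and Lipschitz bounds, the boundary stability estimate of Theorem~\ref{Th:Calderon-Inv-Stability-Boundary} gives $B\le c\,E$. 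Substituting this back and using that the logarithmic modulus $t\mapsto|\ln t\,|^{-\tilde\delta}$ is increasing on $]0,1[$, both arguments collapse to affine functions of $E$ alone, so that $\|q_1-q_2\|_{H^{-1}(\Omega)}\le \tilde\omega(cE)$ and $B\le cE$ after enlarging constants.

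Combining with Proposition~\ref{Prop:Estim-Cond-Alpha} yields $\|\gamma_1-\gamma_2\|_{L^\infty(\Omega)}\le c\big(\tilde\omega(cE)+cE\big)^\alpha$ with $\alpha\in]0,1[$ depending only on $n$. The remaining analytic step is to recognise the right-hand side as a logarithmic modulus of $E$: as $E\to0^+$ the linear term $cE$ is negligible compared with $\tilde\omega(cE)\sim|\ln E|^{-\tilde\delta}$, so for $E$ below a threshold the bracket is $\le2\tilde\omega(cE)$, and using $|\ln(cE)|\ge\frac12|\ln E|$ for small $E$ gives $\|\gamma_1-\gamma_2\|_{L^\infty(\Omega)}\le c|\ln E|^{-\alpha\tilde\delta}$. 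Setting $\delta:=\alpha\tilde\delta$ and defining $\omega(t):=c|\ln t\,|^{-\delta}$ for small $t>0$ (with $c$ chosen large enough, and $\omega$ extended elsewhere by the trivial a priori bound $\|\gamma_1-\gamma_2\|_{L^\infty(\Omega)}\le 2L$ so that $\omega:\R\to\R$ is well defined) delivers the claim.

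The substitutions are routine bookkeeping; the one place needing care is the composition of moduli of continuity — invoking monotonicity of the logarithmic modulus to replace $\tilde\omega(c(E+B))$ by $\tilde\omega(cE)$ after the boundary estimate, checking the dominance of the logarithmic term over the linear one, and the comparison $|\ln(cE)|\asymp|\ln E|$ near $0$, each of which only costs a change of constant and of the threshold below which the bound is applied. I expect the genuine difficulty to lie not in this final theorem, which merely organises the previous estimates, but in the single substantive input it rests upon: the boundary control $B\le cE$, which is exactly Theorem~\ref{Th:Calderon-Inv-Stability-Boundary} and whose proof uses the exterior corkscrew condition~\eqref{eq:Corkscrew} together with the singular solutions of Lemma~\ref{Lem:SingularSol}.
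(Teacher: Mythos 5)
Your proof follows exactly the same chain as the paper's: Proposition~\ref{Prop:Estim-Cond-Alpha} first, then Proposition~\ref{Prop:Estim-Schrodinger} combined with Proposition~\ref{Prop:LinkP-SOperators} to control the Schr\"odinger term, and finally Theorem~\ref{Th:Calderon-Inv-Stability-Boundary} to absorb the boundary term, with the same routine bookkeeping on the logarithmic modulus. The only point worth flagging is one you already noticed: the final step needs the exterior corkscrew condition~\eqref{eq:Corkscrew}, which the theorem's statement omits but which the paper's own proof also tacitly assumes by invoking Theorem~\ref{Th:Calderon-Inv-Stability-Boundary}.
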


\begin{proof}
Starting from Proposition~\ref{Prop:Estim-Cond-Alpha}, the right hand-side can be controlled by means of the stability estimate for the Schrödinger problem from Proposition~\ref{Prop:Estim-Schrodinger}. Then, by the estimate from Proposition~\ref{Prop:LinkP-SOperators} and the boundary stability estimate for the conductivity problem from Theorem~\ref{Th:Calderon-Inv-Stability-Boundary}, it can be further dominated by a quantity which depends only on $\|\Lambda^{\gamma_1}-\Lambda^{\gamma_2}\|$. The estimate follows.
\end{proof}

\appendix

\section{Well-posedness and \textit{a priori} estimates}\label{A-Sec:WP-Estimates}

The purpose of this section is to prove \textit{a priori} estimates for the Laplace equation and the conductivity equation with Dirichlet boundary conditions in the case of admissible domains.
In particular, we are interested in the dependencies of the constants involved.
Throughout this section, $\Omega$ is an admissible domain of $\R^n$.

\subsection{Laplace equation}\label{A-Subsec:Laplace}

Consider the Laplace equation with Dirichlet boundary condition
\begin{equation}\label{Eq:Laplace-Dirichlet}
\begin{cases}
-\Delta u = 0,\\
\Tr u=f\in\Hp.
\end{cases}
\end{equation}
Letting $\varphi\in H^1(\Omega)$ such that $\Delta\varphi=\varphi$ and $\tr\varphi=f$, $u\in H^1(\Omega)$ is a weak solution to~\eqref{Eq:Laplace-Dirichlet} if and only if $w:=u-\varphi\in H^1_0(\Omega)$ is a weak solution to
\begin{equation*}
\begin{cases}
-\Delta w = \varphi,\\
\Tr w=0,
\end{cases}
\end{equation*}
understood in the sense of its variational formulation
\begin{equation}\label{Eq:Laplace-Dirichlet-Var}
\forall v\in H^1_0(\Omega),\quad \int_\Omega\nabla w\cdot\nabla v\,\dx=\int_{\Omega}\varphi v\,\dx.
\end{equation}
By Lax-Milgram's theorem,~\eqref{Eq:Laplace-Dirichlet-Var} is well-posed on $H^1_0(\Omega)$, and the unique solution $w$ satisfies
\begin{equation*}
\|w\|_{H^1_0(\Omega)}\le c(\widO)\|\varphi\|_{H^1(\Omega)}=c(\widO)\|f\|_{\Hp},
\end{equation*}
by Theorem~\ref{Th:Trace}, Point~\ref{Pt:Trace-isometry}, and Poincaré's inequality. Consequently, it holds
\begin{equation*}
\|u\|_{H^1(\Omega)}\le\|w\|_{H^1(\Omega)}+\|\varphi\|_{H^1(\Omega)}\le c(\widO)\|f\|_{\Hp},
\end{equation*}
by Poincaré's inequality once again.

\subsection{Conductivity equation}\label{A-Subsec:Conductivity}

Let $\ell,L>0$, and let $\gamma\in\Tcal(\Omega)$ be such that $\ell\le\gamma$ and $\|\gamma\|_{W^{1,\infty}(\Omega)}\le L$. Consider the non-homogeneous conductivity problem with Dirichlet boundary condition
\begin{equation}\label{Eq:Conductivity-Dirichlet}
\begin{cases}
\nabla\cdot(\gamma\nabla u)=h\in H^{-1}(\Omega),\\
\Tr u=f\in\Hp.
\end{cases}
\end{equation}
By linearity, a weak solution $u\in H^1(\Omega)$ can be decomposed as $u=u_f+u_h$, where $u_f\in H^1(\Omega)$ and $u_h\in H^1_0(\Omega)$ are weak solutions to~\eqref{Eq:Conductivity-Dirichlet} corresponding to $h=0$ and $f=0$ respectively.

Let us focus on $u_f$ first. Letting $\varphi\in H^1(\Omega)$ such that $\Delta\varphi=\varphi$ and $\tr\varphi=f$, $u\in H^1(\Omega)$ is a weak solution to~\eqref{Eq:Conductivity-Dirichlet} if and only if $w:=u-\varphi\in H^1_0(\Omega)$ is a weak solution to
\begin{equation*}
\begin{cases}
\nabla\cdot(\gamma\nabla w) = -\nabla\cdot(\gamma\nabla\varphi),\\
\Tr w=0,
\end{cases}
\end{equation*}
understood in the sense of its variational formulation
\begin{equation}\label{Eq:Conductivity-Dirichlet-Var}
\forall v\in H^1_0(\Omega),\quad \int_\Omega\gamma\nabla w\cdot\nabla v\,\dx=-\int_{\Omega}\gamma\nabla\varphi\cdot\nabla v\,\dx.
\end{equation}
By Lax-Milgram's theorem,~\eqref{Eq:Conductivity-Dirichlet-Var} is well-posed on $H^1_0(\Omega)$, and the unique solution $w$ satisfies
\begin{equation}\label{Eq:Conductivity-Lifted-Estimate}
\|w\|_{H^1_0(\Omega)}\le\frac{\inf\gamma}{\|\gamma\|_{L^\infty(\Omega)}}\|\varphi\|_{H^1(\Omega)}\le c(\ell,L)\|f\|_{\Hp},
\end{equation}
by Theorem~\ref{Th:Trace}, Point~\ref{Pt:Trace-isometry}. By Poincaré's inequality, it follows that
\begin{equation*}
\|u_f\|_{H^1(\Omega)}\le\|w\|_{H^1(\Omega)}+\|\varphi\|_{H^1(\Omega)}\le c(\ell,L,\widO)\|f\|_{\Hp}.
\end{equation*}

The problem solved by $u_h$ is understood in terms of its variational formulation
\begin{equation*}
\forall v\in H^1_0(\Omega),\quad \int_\Omega\gamma\nabla u_h\cdot \nabla v\,\dx=\langle h, v\rangle_{H^{-1}(\Omega),\,H^1_0(\Omega)},
\end{equation*}
which is well-posed on $H^1_0(\Omega)$. $u_h$ is uniquely defined and satisfies
\begin{equation*}
\|u_h\|_{H^1(\Omega)}\le c(\ell,\widO)\|u_h\|_{H^1_0(\Omega)}\le c(\ell,\widO)\|h\|_{H^{-1}(\Omega)}.
\end{equation*}

Altogether, the conductivity problem~\eqref{Eq:Conductivity-Dirichlet} is well-posed on $H^1(\Omega)$, and the unique weak solution $u$ satisfies
\begin{equation*}
\|u\|_{H^1(\Omega)}\le\|u_f\|_{H^1(\Omega)}+\|u_h\|_{H^1(\Omega)}\le c(\ell,L,\widO)(\|f\|_{\Hp}+\|h\|_{H^{-1}(\Omega)}).
\end{equation*}

\section{Proofs of Subsection~\ref{Subsec:Direct-Problem}}\label{A-Sec:Proofs}

This section gathers those among the proofs of Subsection~\ref{Subsec:Direct-Problem} which can be adapted from~\cite[Section 3]{sylvester_inverse_1988} in a rather straightforward manner.

\begin{proof}[Proof of Lemma~\ref{Lem:EstimateG}]
Problem~\eqref{Eq:CondProbG} is understood in terms of its variational formulation:
\begin{equation*}
\forall v\in H^1_0(\Omega),\quad\int_\Omega{\gamma\nabla u\cdot\nabla v\,\dx}=\int_\Omega{\eta\nabla \varphi\cdot\nabla v\,\dx},
\end{equation*}
which is well-posed on $H^1_0(\Omega)$ by Lax-Milgram's theorem. Hence (for $v=u$),
\begin{equation*}
(\inf \gamma)\|u\|_{H^1_0(\Omega)}^2\le \|\eta\|_{L^\infty}\int_\Omega{|\nabla \varphi\cdot\nabla u|\,\dx},
\end{equation*}
which yields~\eqref{Eq:EstimGrad}.

If $\gamma,\eta\in W^{1,\infty}(\Omega)$ and $\Delta \varphi\in L^2(\Omega)$, then
\begin{equation*}
\nabla\cdot(\gamma\nabla u)=\nabla\gamma\cdot\nabla u+\gamma\Delta u\quad\mbox{and}\quad \nabla\cdot(\eta\nabla \varphi)=\nabla\eta\cdot\nabla \varphi+\eta\Delta \varphi,
\end{equation*}
hence multiplying~\eqref{Eq:CondProbG} by $\Delta u\in L^2(\Omega)$ yields
\begin{equation*}
\int_\Omega{\gamma(\Delta u)^2\,\dx}=\int_\Omega{\Delta u\big(\eta\Delta \varphi+\nabla\eta\cdot\nabla \varphi-\nabla\gamma\cdot\nabla u\big)\,\dx},
\end{equation*}
which implies
\begin{equation*}
(\inf \gamma)\|\Delta u\|_{L^2(\Omega)}\le \|\eta\|_{L^\infty}\|\Delta \varphi\|_{L^2}+\|\nabla\eta\|_{L^\infty}\|\nabla \varphi\|_{L^2}+\|\nabla\gamma\|_{L^\infty}\|u\|_{H^1_0}.
\end{equation*}
Then,~\eqref{Eq:EstimGrad} yields~\eqref{Eq:EstimLap}.
\end{proof}

\begin{proof}[Proof of Proposition~\ref{Prop:EstimateV}]
It holds $\nabla\cdot(\gamma\nabla v)=\nabla\cdot(\gamma\nabla u_\Delta)$ weakly on $\Omega$ and $\Tr  v=0$. Proceeding as in the proof of Lemma~\ref{Lem:EstimateG}, only writing the variational formulation as
\begin{equation*}
\forall w\in H^1_0(\Omega),\quad\int_\Omega{(\sqrt{\gamma}\nabla v)\cdot(\sqrt{\gamma}\nabla w)\,\dx}=\int_\Omega{(\sqrt{\gamma}\nabla u_\Delta)\cdot(\sqrt{\gamma}\nabla w)\,\dx},
\end{equation*}
yields~\eqref{Eq:EstimGrad-v} and~\eqref{Eq:EstimGrad-u}, given the estimate $\|\nabla u_\Delta\|_{L^2(\Omega)^n}\le c(\widO)\|f\|_{\Hp}$ (see Subsection~\ref{A-Subsec:Laplace}). Then,~\eqref{Eq:EstimLap} yields~\eqref{Eq:EstimLap-v}.
\end{proof}

\begin{proof}[Proof of Corollary~\ref{Cor:EstimateDiff}]
It holds $\nabla\cdot(\gamma_1\nabla(v_1-v_2))=\nabla\cdot((\gamma_2-\gamma_1)\nabla (v_2-u_\Delta))$ weakly on $\Omega$ and $\Tr (v_1-v_2)=0$. Using Lemma~\ref{Lem:EstimateG} followed by Proposition~\ref{Prop:EstimateV} yields the result.
\end{proof}

\section*{Acknowledgments}
The authors are most grateful to Claude Bardos for comments which have contributed to motivate this work, as well as valuable insights on the topic and on the related litterature.
The authors also thank Giovanni Alessandrini for enlightening comments on his previous works on the subject, as well as Alexander Teplyaev for interesting conversations regarding aspects of this work.

\def\refname{References}
\bibliographystyle{siam}
\bibliography{BibGC.bib}

\end{document}